\documentclass[11pt,reqno]{amsart}
\usepackage{mathrsfs}
\usepackage{url}
\usepackage{mathtools}
\usepackage{latexsym,epsfig,amssymb,amsmath,amsthm,color,url,bm}
\usepackage[inline,shortlabels]{enumitem}
\usepackage{hyperref}
\usepackage[foot]{amsaddr}
\usepackage{amsmath,amsbsy}
\usepackage{mwe}
\RequirePackage[numbers]{natbib}
\usepackage{mathptmx}
\usepackage[text={16cm,24cm}]{geometry}

\allowdisplaybreaks 
\setlength{\textwidth}{6.5in}
\setlength{\textheight}{8.5in} \setlength{\topmargin}{0.25in}
\setlength{\headheight}{0in} \setlength{\oddsidemargin}{0in}
\setlength{\evensidemargin}{0in} \flushbottom
\pagestyle{myheadings} \numberwithin{equation}{section}
\newtheorem{theorem}{Theorem}[section]

\newtheorem{lemma}[theorem]{Lemma}

\setlength{\parskip}{0cm}
    \setlength{\parindent}{1em}

\newcommand\Item[1][]{%
  \ifx\relax#1\relax  \item \else \item[#1] \fi
  \abovedisplayskip=0pt\abovedisplayshortskip=0pt~\vspace*{-\baselineskip}}

\theoremstyle{definition}

\theoremstyle{definition}
\newtheorem{remark}[theorem]{Remark}

\DeclareMathOperator{\Prob}{\mathbf{P}}

\title[Learning models on trees witih majority update policy]{Learning models on rooted regular trees with majority update policy: convergence and phase transition}
\date{}
\author{Moumanti Podder, Anish Sarkar}
\address{Moumanti Podder, Indian Institute of Science Education and Research (IISER) Pune, Dr.\ Homi Bhabha Road, Pashan, Pune 411008, Maharashtra, India.}
\address{Anish Sarkar, Indian Statistical Institute, 7 S.\ J.\ S.\ Sansanwal Marg, New Delhi 110016, India.}
\email{moumanti@iiserpune.ac.in}
\email{anish.sarkar@gmail.com}

\begin{document}
\bibliographystyle{plainnat}

\begin{abstract}
We study a model of social learning on rooted regular trees. An agent is stationed at each vertex of $\mathbb{T}_{m}$, the rooted tree in which each vertex has precisely $m$ children, and at any time-step $t \in \mathbb{N}_{0}$, the agent is allowed to select one of two available technologies: $B$ and $R$. Let the technology chosen by the agent at vertex $v$ of $\mathbb{T}_{m}$, at time-step $t$, be $C_{t}(v)$. We begin with the i.i.d.\ collection $\{C_{0}(v): v \in \mathbb{T}_{m}\}$, where $C_{0}(v)=B$ with probability $\pi_{0}$. During the epoch $t$, the agent at vertex $v$ performs an experiment that results in success with probability $p_{B}$ if $C_{t}(v)=B$, and with probability $p_{R}$ if $C_{t}(v)=R$. If the children of $v$ are denoted $v_{1}, \ldots, v_{m}$, the agent at $v$ updates their technology to $C_{t+1}(v)=B$ if the number of successes among all $v_{i}$ (where $i \in \{1,2,\ldots,m\}$) with $C_{t}(v_{i})=B$ exceeds, strictly, the number of successes among all $v_{j}$ (where $j \in \{1,2,\ldots,m\}$) with $C_{t}(v_{j})=R$. If these two numbers are equal, then the agent at $v$ sets $C_{t+1}(v)=B$ with probability $1/2$. In all other cases, $C_{t+1}(v)=R$. We show that $\{C_{t}(v): v \in \mathbb{T}_{m}\}$ is i.i.d.\ as well, with $C_{t}(v)=B$ with probability $\pi_{t}$, where the sequence $\{\pi_{t}\}_{t \in \mathbb{N}_{0}}$ converges to a fixed point $\pi$, in $[0,1]$, of a function $g_{m}$. We show that, for $m \geqslant 3$, there exists a $p(m) \in (0,1)$ such that $g_{m}$ has the unique fixed point, $1/2$, when $p \leqslant p(m)$, and three distinct fixed points, of the form $\alpha$, $1/2$ and $1-\alpha$, for some $\alpha \in [0,1/2)$, when $p > p(m)$. When $m=3$, $p_{B}=1$ and $p_{R} \in [0,1)$, we show that the function $g_{3}$
\begin{enumerate*}
\item has a unique fixed point, $1$, when $p_{R} < \sqrt{3}-1$, 
\item has two distinct fixed points, one of which is $1$, when $p_{R} = \sqrt{3}-1$,
\item and has three distinct fixed points, one of which is $1$, when $p_{R} > \sqrt{3}-1$.
\end{enumerate*}
When $g_{m}$ has multiple fixed points, we also specify which of these fixed points $\pi$ equals, depending on $\pi_{0}$. Finally, for $m=2$, we describe the behaviour of $g_{3}$ for \emph{all} values of $p_{B}$ and $p_{R}$.
\end{abstract}

\subjclass[2020]{68Q32, 91D30, 68T05, 91D10, 60K35, 68Q87}%{05C57, 37B15, 37A25, 68Q80}%60C05, 68Q87, 05C05, , 05C80, 05C65, 05D40

\keywords{leaning models; social learning; phase transitions; convergence of stochastic processes; interacting particle systems; rooted regular trees; diffusion of technologies}

\maketitle

\section{Introduction}\label{sec:intro}
\subsection{An overview of our learning model}\label{subsec:overview}
For any positive integer $m$ with $m \geqslant 2$, let $\mathbb{T}_{m}$ indicate the infinite rooted tree, with root $\phi$, in which each vertex has precisely $m$ children. Let $V(\mathbb{T}_{m})$ denote the set of all vertices of $\mathbb{T}_{m}$, and imagine an agent occupying each vertex of $V(\mathbb{T}_{m})$. We consider a \emph{learning model} in which each agent, at each time-step (where time is indexed by the set $\mathbb{N}_{0}$ of non-negative integers), has to adopt one of two possible technologies that we henceforth refer to as \emph{colours} or \emph{states} and denote by the letters $B$ and $R$. The state of the agent at a vertex $v \in V(\mathbb{T}_{m})$ at time-step $t$, for each $t \in \mathbb{N}_{0}$, is denoted by $C_{t}(v)$. Conditioned on the states of the agents at all the vertices of $V(\mathbb{T}_{m})$ at time-step $t$, the (random) state $C_{t+1}(v)$ to which the agent at $v$ updates itself at time-step $t+1$ has a probability distribution that is a function of $C_{t}(v_{1}), C_{t}(v_{2}), \ldots, C_{t}(v_{m})$, where $v_{1}, v_{2}, \ldots, v_{m}$ are the children of $v$. 

Such a model can be viewed from a number of different perspectives, such as
\begin{enumerate}
\item as a discrete-time \emph{interacting system of particles} on $\mathbb{T}_{m}$ with infinitely many changes allowed to happen at each time-step,
\item as a model for \emph{social learning}, for understanding the \emph{diffusion of technologies} throughout an infinite population of agents,
\item as a \emph{probabilistic finite state tree automaton} on $\mathbb{T}_{m}$.
\end{enumerate}
We delve deeper into each of the above motivating reasons for the study of such models in \S\ref{subsec:motivations_literature}.

Each model studied in this paper admits a crucial component known as a \emph{policy function}: it governs how the agent at $v$, for each $v \in V(\mathbb{T}_{m})$, \emph{learns} from the states $C_{t}(v_{i})$, $i \in \{1,2,\ldots,m\}$, of the agents situated at the children $v_{1}, v_{2}, \ldots, v_{m}$ of $v$ at time-step $t$, and subsequently updates its own state to $C_{t+1}(v)$ at time-step $t+1$. When such a model is viewed as a \emph{probabilistic tree automaton} (henceforth abbreviated as a \emph{PTA}), as explained in \S\ref{subsec:motivations_literature}, the policy function is referred to as the \emph{stochastic update rule} associated with that PTA. We are concerned with studying the \emph{absolute majority policy} function, which we describe in detail in \S\ref{subsec:abs_maj}.

\subsection{The absolute majority policy}\label{subsec:abs_maj}
\sloppy In this model, the agent occupying vertex $v$, for each $v \in V(\mathbb{T}_{m})$, is endowed with two random variables, $X_{t}(v)$ and $Y_{t}(v)$, for each time-step $t \in \mathbb{N}_{0}$. For each $v \in V(\mathbb{T}_{m})$ and each $t \in \mathbb{N}_{0}$, the random variable $X_{t}(v)$ captures the \emph{outcome} of the (random) experiment performed by the agent located at $v$, during the time-step or \emph{epoch} $t$, using the technology $C_{t}(v)$ that this agent is currently equipped with. This experiment has two possible outcomes: \emph{success} and \emph{failure}, so that $X_{t}(v)$ is a Bernoulli random variable that acts as an indicator for the event of success. We let $X_{t}(v)$ follow Bernoulli$(p_{B})$ conditioned on $C_{t}(v)=B$, and we let $X_{t}(v)$ follow Bernoulli$(p_{R})$ conditioned on $C_{t}(v)=R$, where $p_{B}, p_{R} \in [0,1]$ are pre-assigned parameters in our model. The random variable $Y_{t}(v)$ follows Bernoulli$\left(\frac{1}{2}\right)$ for each $v \in V(\mathbb{T}_{m})$ and each $t \in \mathbb{N}_{0}$. It is assumed that the entire collection $\left\{X_{t}(v): v \in V(\mathbb{T}_{m})\right\}\bigcup\left\{Y_{t}(v): v \in V(\mathbb{T}_{m})\right\}$ of random variables is independent for \emph{each} time-step $t \in \mathbb{N}_{0}$, and the collection $\left\{Y_{t}(v): v \in V(\mathbb{T}_{m})\right\}$ is independent of $\left\{C_{s}(u): u \in V(\mathbb{T}_{m}), s \in \{0,1,\ldots,t\}\right\}$ for each $t \in \mathbb{N}_{0}$. %Moreover, when $p_{B}=p_{R}$, the collection $\left\{X_{t}(v): v \in V(\mathbb{T}_{m})\right\}$ is independent of $\left\{C_{s}(u): u \in V(\mathbb{T}_{m}), s \in \{0,1,\ldots,t\}\right\}$ for each $t \in \mathbb{N}_{0}$.

Conditioned on $\left\{C_{t}(u): u \in V(\mathbb{T}_{m})\right\}$, the agent at $v$ updates its state to $C_{t+1}(v)$ according to the following rule, where $v_{1}, v_{2}, \ldots, v_{m}$ denote the children of $v$:
\begin{equation}\label{abs_maj_rule}
 C_{t+1}(v) = 
  \begin{cases} 
   B & \text{if } \sum_{i=1}^{m}X_{t}(v_{i})\mathbf{1}_{C_{t}(v_{i})=B} > \sum_{i=1}^{m}X_{t}(v_{i})\mathbf{1}_{C_{t}(v_{i})=R}, \\
   B & \text{if } \sum_{i=1}^{m}X_{t}(v_{i})\mathbf{1}_{C_{t}(v_{i})=B} = \sum_{i=1}^{m}X_{t}(v_{i})\mathbf{1}_{C_{t}(v_{i})=R} \text{ and } Y_{t}(v)=1, \\
   R & \text{otherwise.}
  \end{cases}
\end{equation}
In words, this can be described as follows: the agent at the vertex $v$ counts the number $\sum_{i=1}^{m}X_{t}(v_{i})\mathbf{1}_{C_{t}(v_{i})=B}$ of successful experiments performed, during epoch $t$, by all those agents that occupy the children of $v$ \emph{and} that are in state $B$ at time-step $t$, and likewise, it also counts the number $\sum_{i=1}^{m}X_{t}(v_{i})\mathbf{1}_{C_{t}(v_{i})=R}$ of successful experiments performed, during epoch $t$, by all those agents that occupy the children of $v$ \emph{and} that are in state $R$ at time-step $t$. If the former count strictly exceeds the latter, then the agent at $v$ updates its state to $B$ at time-step $t+1$, while if the latter strictly exceeds the former, then the agent at $v$ updates its state to $R$ at time-step $t+1$. If the two counts are equal, then the agent at $v$ tosses a fair coin in order to break the tie. If the coin lands a head, indicated by $Y_{t}(v)=1$, then the agent at $v$ updates its state to $B$ at time-step $t+1$, and if the coin lands a tail, it updates its state to $R$ at time-step $t+1$.

We begin the process at time-step $t=0$ with an i.i.d.\ assignment of states to the vertices of $\mathbb{T}_{m}$, i.e.\ the collection $\left\{C_{0}(v): v \in V(\mathbb{T}_{m})\right\}$ is i.i.d.\ with
\begin{equation}
C_{0}(v)=B \text{ with probability } \pi_{0} \text{ and } C_{0}(v)=R \text{ with probability } 1-\pi_{0}\label{initial}
\end{equation}
for each $v \in V(\mathbb{T}_{m})$. Letting $\nu_{t}$ denote the joint law of $\left\{C_{t}(v): v \in V(\mathbb{T}_{m})\right\}$ for each $t \in \mathbb{N}_{0}$, this paper is concerned with answering the question: does $\left\{\nu_{t}\right\}_{t \in \mathbb{N}_{0}}$ converge as $t \rightarrow \infty$, and if yes, what is its distributional limit? We investigate this question for \emph{all} values of $(p_{B},p_{R}) \in [0,1]^{2}$ when $m=2$, whereas when $m \geqslant 3$, we consider two \emph{regimes} of values of the parameter-pair $(p_{B},p_{R})$. The first of these regimes is where $p_{B}=p_{R}=p$ for some $p \in [0,1]$, i.e.\ an agent, located at any vertex of $\mathbb{T}_{m}$, stands the same chance of achieving a successful outcome in the experiment performed during epoch $t$ irrespective of whether its state at time-step $t$ is $B$ or $R$. We present a full description of what happens in such a scenario, as the common value $p$ of the two parameters is allowed to vary over $[0,1]$. The complement of this regime is where $p_{B} \neq p_{R}$, but when $m \geqslant 3$, the analysis for $p_{B} \neq p_{R}$, without additional assumptions, quickly becomes intractable, and it becomes difficult to conclude anything about the distributional limit(s) of $\left\{\nu_{t}\right\}_{t \in \mathbb{N}_{0}}$ (depending on the initial distribution $\nu_{0}$ given by \eqref{initial}, and on the values of $p_{B}$ and $p_{R}$). We, therefore, focus on the special case of $p_{B}=1$ and $p_{R} \in [0,1)$, and we demonstrate, via the relatively less unwieldy examples of $m=2$ and $m=3$, that the conclusion is rather non-intuitive and further investigations are warranted. This special case can be considered ``extremal" in the sense that, a vertex that is in state $B$ is \emph{sure} to achieve success when it performs its experiment (using technology $B$), since $p_{B}=1$, whereas the experiment performed (using technology $R$) by a vertex in state $R$ has a positive probability of resulting in a failure. Intuition seems to suggest that in such a scenario, eventually, every vertex of $\mathbb{T}_{m}$ would opt for the (evidently superior) technology $B$, but as it turns out (see \S\ref{sec:p_{B}=1}), this is not necessarily the case, and the distributional limit depends heavily on the value of $p_{R}$.

\subsection{Main results}\label{subsec:main_results}
We begin with a result that is applicable to \emph{all} values of the parameter-pair $(p_{B},p_{R})$:
\begin{theorem}\label{thm:main_general}
Recall that $\nu_{t}$ indicates the joint law of $\left\{C_{t}(v): v \in V(\mathbb{T}_{m})\right\}$ for each $t \in \mathbb{N}_{0}$, and recall that we begin from an i.i.d.\ assignment of states (from $\{B,R\}$) to the vertices of $\mathbb{T}_{m}$, as described in \eqref{initial}. The following are true:
\begin{enumerate}
\item \label{gen:1} For each $t \in \mathbb{N}$, the random variables $C_{t}(v)$, for $v \in V(\mathbb{T}_{m})$, are i.i.d.\ with
\begin{equation}
C_{t}(v)=B \text{ with probability } \pi_{t} \text{ and } C_{t}(v)=R \text{ with probability } 1-\pi_{t},\label{time_t}
\end{equation}
for each $v \in V(\mathbb{T}_{m})$.
\item \label{gen:2} The sequence $\{\nu_{t}\}_{t \in \mathbb{N}_{0}}$ converges if and only if the sequence $\{\pi_{t}\}_{t \in \mathbb{N}_{0}}$ converges, as $t \rightarrow \infty$, and in that case, if $\pi = \lim_{t \rightarrow \infty}\pi_{t}$, then $\{\nu_{t}\}_{t \in \mathbb{N}_{0}}$ converges to $\nu$, where $\nu$ is the joint law of the collection $\left\{C_{\infty}(v): v \in V(\mathbb{T}_{m})\right\}$ of i.i.d.\ random variables $C_{\infty}(v)$, where
\begin{equation}
C_{\infty}(v) = B \text{ with probability } \pi \text{ and } C_{\infty}(v) = R \text{ with probability } 1-\pi\label{limit_pi}
\end{equation}
for each $v \in V(\mathbb{T}_{m})$.
\item \label{gen:3} The limit $\pi = \lim_{t \rightarrow \infty}\pi_{t}$, when it exists, is a fixed point of the function $g_{m}: [0,1] \rightarrow [0,1]$ defined as
\begin{equation}
g_{m}(x) = \sum_{k=0}^{m}f_{m}(k) {m \choose k} x^{k}(1-x)^{m-k},\label{g_{m}^{abs}}
\end{equation}
in which
\begin{equation}\label{f_{m}^{abs}}
f_{m}(k) = \Prob\left[A_{k} > B_{m-k}\right] + \frac{1}{2}\Prob\left[A_{k} = B_{m-k}\right],
\end{equation} 
where the random variable $A_{k}$ follows Binomial$(k,p_{B})$, the random variable $B_{m-k}$ follows Binomial$(m-k,p_{R})$, and $A_{k}$ and $B_{m-k}$ are independent of each other.
\end{enumerate}
\end{theorem}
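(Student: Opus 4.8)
The plan is to prove the three assertions in sequence, with essentially all of the content living in part~\eqref{gen:1}. Once part~\eqref{gen:1} is established, together with the recursion $\pi_{t+1}=g_{m}(\pi_{t})$ that emerges from its proof, parts~\eqref{gen:3} and \eqref{gen:2} follow from short, soft arguments. I would prove \eqref{gen:1} by induction on $t$, the base case $t=0$ being exactly the i.i.d.\ assumption \eqref{initial} (with parameter $\pi_{0}$).

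For the inductive step, suppose $\{C_{t}(v): v \in V(\mathbb{T}_{m})\}$ is i.i.d.\ with $\Prob[C_{t}(v)=B]=\pi_{t}$. The crucial structural fact is that, since $\mathbb{T}_{m}$ is a tree, distinct vertices $v \neq w$ have disjoint sets of children; consequently the rule \eqref{abs_maj_rule} writes $C_{t+1}(v)$ as a deterministic function of $\big(C_{t}(v_{1}),\ldots,C_{t}(v_{m}),X_{t}(v_{1}),\ldots,X_{t}(v_{m}),Y_{t}(v)\big)$, and for different $v$ these input tuples involve pairwise disjoint families of the underlying variables. I would condition on the whole configuration $\{C_{t}(u): u \in V(\mathbb{T}_{m})\}$: given this, the $X_{t}(v_{i})$ are conditionally independent with $X_{t}(v_{i}) \sim \ber(p_{B})$ or $\ber(p_{R})$ according as $C_{t}(v_{i})=B$ or $R$, while the $Y_{t}(v)$ are independent $\ber(1/2)$ variables independent of everything up to time $t$. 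Writing $K_{v}$ for the number of children of $v$ in state $B$ at time $t$, a direct application of \eqref{abs_maj_rule} then gives $\Prob[C_{t+1}(v)=B \mid \{C_{t}(u)\}] = f_{m}(K_{v})$, where $f_{m}$ is exactly \eqref{f_{m}^{abs}}: conditioned on $K_{v}=k$, the $B$-tally is $A_{k}\sim\bin(k,p_{B})$, the $R$-tally is $B_{m-k}\sim\bin(m-k,p_{R})$, these are independent, and the tie is broken by $Y_{t}(v)$.

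To conclude independence and identify the new parameter, I would compute, for any finite $S \subseteq V(\mathbb{T}_{m})$ and any $(c_{v})_{v \in S} \in \{B,R\}^{S}$,
\[
\Prob\big[C_{t+1}(v)=c_{v}\ \text{for all } v \in S\big] = \E\Big[\prod_{v \in S}\Prob[C_{t+1}(v)=c_{v}\mid \{C_{t}(u)\}]\Big],
\]
which uses conditional independence across $v$ (the disjointness of the child-sets and of the auxiliary variables). Each factor depends only on the states of the children of $v$, and for distinct $v \in S$ these are disjoint blocks of the i.i.d.\ family $\{C_{t}(u)\}$; the induction hypothesis therefore lets the expectation factor over $v \in S$, yielding $\prod_{v \in S}\Prob[C_{t+1}(v)=c_{v}]$ and hence independence. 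The single marginal $\Prob[C_{t+1}(v)=B]=\E[f_{m}(K_{v})]$ with $K_{v}\sim\bin(m,\pi_{t})$ gives, for \emph{every} $v$,
\[
\pi_{t+1} = \sum_{k=0}^{m} f_{m}(k)\binom{m}{k}\pi_{t}^{k}(1-\pi_{t})^{m-k} = g_{m}(\pi_{t}),
\]
which is independent of $v$, completing the induction and establishing the recursion.

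Part~\eqref{gen:3} is then immediate: $g_{m}$ is a polynomial, hence continuous, so if $\pi_{t}\to\pi$ then passing to the limit in $\pi_{t+1}=g_{m}(\pi_{t})$ yields $\pi=g_{m}(\pi)$. For part~\eqref{gen:2} I would use that $\{B,R\}^{V(\mathbb{T}_{m})}$, with the product topology, is a compact metric space on which weak convergence of probability measures is equivalent to convergence of all finite-dimensional (cylinder) marginals. By part~\eqref{gen:1}, the $\nu_{t}$-probability of the cylinder event $\{C(v)=c_{v}, v \in S\}$ equals $\pi_{t}^{n_{B}}(1-\pi_{t})^{n_{R}}$, a polynomial in $\pi_{t}$ (here $n_{B}, n_{R}$ count the prescribed $B$'s and $R$'s); thus $\pi_{t}\to\pi$ forces every cylinder probability to converge to its value under the i.i.d.($\pi$) law $\nu$, giving $\nu_{t}\to\nu$, while conversely weak convergence of $\{\nu_{t}\}$ forces the one-vertex marginal $\pi_{t}=\Prob[C_{t}(\phi)=B]$ to converge. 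I expect the only genuinely delicate point to be the bookkeeping in part~\eqref{gen:1}: one must check carefully that conditioning on the time-$t$ configuration decouples the updates at distinct vertices, so that the tree's disjoint-children structure and the built-in independence of $\{X_{t}(v)\}$ and $\{Y_{t}(v)\}$ combine to preserve the i.i.d.\ property exactly; everything else is routine.
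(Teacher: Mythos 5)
Your proposal is correct and follows essentially the same route as the paper: induction on $t$ using the disjointness of the child-sets and the independence of $\{X_{t}(\cdot)\}\cup\{Y_{t}(\cdot)\}$ to preserve the i.i.d.\ property, the recursion $\pi_{t+1}=g_{m}(\pi_{t})$ obtained by conditioning on the number of children in state $B$, compactness of $\{B,R\}^{V(\mathbb{T}_{m})}$ together with convergence of cylinder marginals for part (ii), and continuity of the polynomial $g_{m}$ for part (iii). The only difference is organizational---you derive the recursion inside the inductive step via the tower property, whereas the paper establishes independence abstractly in part (i) and derives the recursion separately in its proof of part (iii)---which changes nothing of substance.
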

From the last assertion made in the statement of Theorem~\ref{thm:main_general}, it becomes apparent that we need to answer the following questions:
\begin{enumerate}
\item Does the function $g_{m}$ have a unique fixed point in $[0,1]$?
\item If $g_{m}$ has multiple fixed points in $[0,1]$, which of these are attractive and which are repulsive?
\end{enumerate}

Let us now make the following simple observation: 
\begin{remark}\label{rem:unique_fixed_point_attractive}
When $g_{m}$ has a unique fixed point, say $\pi$, in $[0,1]$, it must be attractive, i.e.\ no matter what value $\pi_{0} \in [0,1]$ we choose in \eqref{initial}, the sequence of laws $\{\nu_{t}\}_{t \in \mathbb{N}_{0}}$ converges, as $t \rightarrow \infty$, to $\nu$ defined by \eqref{limit_pi}. 
\end{remark}
\begin{proof}
Let us consider any subsequence $\left\{\pi_{t_{k}}\right\}_{k \in \mathbb{N}}$ of the sequence $\{\pi_{t}\}_{t \in \mathbb{N}_{0}}$ (defined in \eqref{time_t}). Since this subsequence is bounded, it has a further subsequence, say $\left\{\pi_{t_{k_{\ell}}}\right\}_{\ell \in \mathbb{N}}$, that converges as $\ell \rightarrow \infty$. From \eqref{gen:3} of Theorem~\ref{thm:main_general}, we know that $\lim_{\ell \rightarrow \infty}\pi_{t_{k_{\ell}}}$ has to be a fixed point of $g_{m}$, and as $g_{m}$ has a unique fixed point, $\pi$, in $[0,1]$, we must have $\lim_{\ell \rightarrow \infty}\pi_{t_{k_{\ell}}}=\pi$. This allows us to conclude that the entire sequence $\{\pi_{t}\}_{t \in \mathbb{N}_{0}}$, in fact, converges to $\pi$ as $t\rightarrow \infty$, and the rest of the conclusion in Remark~\ref{rem:unique_fixed_point_attractive} follows from \eqref{gen:2} of Theorem~\ref{thm:main_general}.
\end{proof} 

Before moving on to higher values of $m$, we state the following result concerning $m=2$ and \emph{all} possible values of $p_{B}$ and $p_{R}$:
\begin{theorem}\label{thm:main_m=2}
The function $g_{2}$ is strictly convex for $p_{R} > p_{B}$, strictly concave for $p_{R} < p_{B}$, and linear for $p_{B}=p_{R}$. In each of these cases, except when $p_{B}=p_{R}=1$, the function $g_{2}$ has a unique fixed point in $(0,1)$. When $p_{B}=p_{R}=1$, the entire interval $[0,1]$ constitutes the set of fixed points of $g_{2}$.
\end{theorem}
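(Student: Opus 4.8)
The plan is to exploit the fact that $g_{2}$, by \eqref{g_{m}^{abs}}, is a quadratic polynomial in $x$, so that both its convexity and its entire fixed-point structure are controlled by the three numbers $f_{2}(0)$, $f_{2}(1)$, $f_{2}(2)$. First I would evaluate these from \eqref{f_{m}^{abs}}. Since $A_{0}$ and $B_{0}$ are degenerate at $0$, short computations give $f_{2}(0)=\tfrac12(1-p_{R})^{2}$ and $f_{2}(2)=1-\tfrac12(1-p_{B})^{2}$, while for $k=1$, expanding $\Prob[A_{1}>B_{1}]+\tfrac12\Prob[A_{1}=B_{1}]$ with independent $A_{1}\sim\bin(1,p_{B})$ and $B_{1}\sim\bin(1,p_{R})$ collapses, after cancellation, to $f_{2}(1)=\tfrac12(1+p_{B}-p_{R})$.

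Next I would expand $g_{2}(x)=f_{2}(0)(1-x)^{2}+2f_{2}(1)x(1-x)+f_{2}(2)x^{2}$ in powers of $x$ and read off the coefficient of $x^{2}$, which is $f_{2}(0)-2f_{2}(1)+f_{2}(2)$. Substituting the values above, this simplifies to $\tfrac12(p_{R}^{2}-p_{B}^{2})=\tfrac12(p_{R}-p_{B})(p_{R}+p_{B})$. Since $p_{B},p_{R}\in[0,1]$ and the factor $p_{R}+p_{B}$ is strictly positive whenever $p_{R}\neq p_{B}$, the sign of this coefficient is exactly that of $p_{R}-p_{B}$. This delivers the convexity trichotomy at once: $g_{2}$ is strictly convex for $p_{R}>p_{B}$, strictly concave for $p_{R}<p_{B}$, and linear for $p_{B}=p_{R}$.

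For the fixed points I would study $h(x):=g_{2}(x)-x$, recording the boundary values $h(0)=f_{2}(0)=\tfrac12(1-p_{R})^{2}\geqslant 0$ and $h(1)=f_{2}(2)-1=-\tfrac12(1-p_{B})^{2}\leqslant 0$, which already guarantee a fixed point in $[0,1]$ by the intermediate value theorem. In the linear regime $p_{B}=p_{R}=p$, solving $h(x)=0$ reduces to $x(1-p)^{2}=\tfrac12(1-p)^{2}$, whose unique root is $x=1/2$ when $p<1$; when $p=1$ both sides vanish, $h\equiv 0$, and every point of $[0,1]$ is fixed, which is the stated exceptional configuration. In the strictly convex and strictly concave regimes $h$ is itself strictly convex, respectively strictly concave, hence has at most two zeros in $[0,1]$; combining this with $h(0)\geqslant 0\geqslant h(1)$ will force exactly one zero.

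The step requiring the most care is this uniqueness argument in the convex/concave regimes, together with the degenerate endpoints $p_{R}=1$ and $p_{B}=1$. In the strictly convex case with $p_{B},p_{R}<1$ one has $h(0)>0>h(1)$; a strictly convex function is negative only strictly between its two roots, so a second root $r_{2}\in[0,1]$ would force $1\in(r_{1},r_{2})$ and hence $r_{2}>1$, a contradiction, leaving a single interior fixed point in $(0,1)$. The strictly concave case is the mirror image, most cleanly obtained from the symmetry $g_{2}(x;p_{B},p_{R})=1-g_{2}(1-x;p_{R},p_{B})$, which follows from the identity $1-f_{2}(m-j;p_{R},p_{B})=f_{2}(j;p_{B},p_{R})$ and maps the convex analysis onto the concave one via $x\mapsto 1-x$. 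Finally, when exactly one of $p_{B},p_{R}$ equals $1$ the corresponding boundary value of $h$ vanishes; here I would factor $h$ explicitly, verify that the second root lies outside $[0,1]$, and conclude that the unique fixed point sits at the relevant endpoint, so that $p_{B}=p_{R}=1$ remains the only configuration in which $h$ is identically zero and the whole interval $[0,1]$ is fixed.
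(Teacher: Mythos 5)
Your proposal is correct and takes essentially the same route as the paper: compute $f_{2}(0)$, $f_{2}(1)$, $f_{2}(2)$, deduce the convexity trichotomy from the sign of the quadratic coefficient $\tfrac{1}{2}(p_{R}^{2}-p_{B}^{2})$ of $g_{2}$ (the paper equivalently computes $g_{2}''=p_{R}^{2}-p_{B}^{2}$), and combine strict convexity/concavity/linearity with the boundary signs $h(0)=\tfrac{1}{2}(1-p_{R})^{2}\geqslant 0$ and $h(1)=-\tfrac{1}{2}(1-p_{B})^{2}\leqslant 0$ to obtain existence and uniqueness of the fixed point, treating the linear case $p_{B}=p_{R}$ and the degenerate cases $p_{B}=1$ or $p_{R}=1$ separately. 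Your write-up is, if anything, more explicit than the paper's at the uniqueness step (which the paper settles by asserting that the two observations, ``combined,'' yield a unique intersection), and your parameter-swap symmetry $g_{2}(x;p_{B},p_{R})=1-g_{2}(1-x;p_{R},p_{B})$ reducing the concave case to the convex one is a tidy refinement absent from the paper, but these remain variations within the same argument.
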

From Remark~\ref{rem:unique_fixed_point_attractive}, it is evident that for $m=2$ and when $(p_{B},p_{R}) \neq (1,1)$, the sequence $\{\nu_{t}\}_{t \in \mathbb{N}_{0}}$ converges to $\nu$, as $t \rightarrow \infty$, where $\nu$ is as defined by \eqref{limit_pi}, with $\pi$ being the unique fixed point of $g_{2}$. For $m=2$ and $p_{B}=p_{R}=1$, for each $\pi_{0} \in [0,1]$ (where $\pi_{0}$ is as defined in \eqref{initial}), the law $\nu_{t}$ is the same as $\nu_{0}$ for every $t \in \mathbb{N}$, so that the limit equals $\nu_{0}$ itself. 

We now come to the two specific regimes of values of the parameter-pair $(p_{B},p_{R})$ that, as mentioned towards the end of \S\ref{subsec:abs_maj}, are considered in this paper. When $p_{B}=p_{R}=p$ for some $p \in [0,1]$, the learning model with the absolute majority policy described in \S\ref{subsec:abs_maj} admits the following phase transition phenomenon:
\begin{theorem}\label{thm:main_equal}
Recall the function $g_{m}$ defined in \eqref{g_{m}^{abs}}, and consider $p_{B}=p_{R}=p$ in the definition of the function $f_{m}$ in \eqref{f_{m}^{abs}}. For each $m \in \mathbb{N}$ with $m \geqslant 2$, there exists $p(m) \in (0,1)$ such that 
\begin{enumerate}
\item \label{thm:equal_part_1} for all $p \leqslant p(m)$, the function $g_{m}$ has a unique fixed point in $[0,1]$, which is $1/2$; 
\item \label{thm:equal_part_2} for all $p > p(m)$, the function $g_{m}$ has precisely three fixed points in $[0,1]$, of the form $\alpha$, $1/2$ and $1-\alpha$, where $\alpha=\alpha(p,m) < 1/2$ is some function of $p$ and $m$).
\end{enumerate}
Recall $\pi_{t}$ from \eqref{time_t}, for each $t \in \mathbb{N}$. For \emph{every} choice of the initial distribution (i.e.\ for every choice of $\pi_{0}$, as defined in \eqref{initial}), the sequence $\{\pi_{t}\}_{t \in \mathbb{N}_{0}}$ converges, as $t \rightarrow \infty$, to $\pi$ (as defined in the statement of Theorem~\ref{thm:main_general}). Here, $\pi=1/2$ for each $\pi_{0} \in [0,1]$ when $p \leqslant p(m)$, whereas when $p > p(m)$, we have 
\begin{enumerate*}
\item $\pi=\alpha$ when $\pi_{0} \in [0,1/2)$, 
\item $\pi=1/2$ when $\pi_{0}=1/2$,
\item and $\pi=1-\alpha$ when $\pi_{0} \in (1/2,1]$.
\end{enumerate*}
\end{theorem}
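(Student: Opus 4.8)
The entire analysis rests on a reflection symmetry of $g_m$. When $p_B=p_R=p$, the variables $A_k\sim\text{Binomial}(k,p)$ and $B_{m-k}\sim\text{Binomial}(m-k,p)$ in \eqref{f_{m}^{abs}} are exchangeable under interchanging the $k$ ``blue'' children with the $m-k$ ``red'' ones, so that $f_m(k)+f_m(m-k)=1$ for every $k$. Replacing $k$ by $m-k$ in \eqref{g_{m}^{abs}} then yields the identity $g_m(1-x)=1-g_m(x)$ on $[0,1]$. Two facts follow at once: taking $x=1/2$ gives $g_m(1/2)=1/2$, so $1/2$ is \emph{always} a fixed point, and if $\alpha$ is a fixed point then so is $1-\alpha$, whence the remaining fixed points occur in reflected pairs. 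Writing $h(x):=g_m(x)-x$, the symmetry reads $h(1-x)=-h(x)$, so $h$ is odd about $1/2$ and it suffices to count its zeros on $[0,1/2]$. Here the endpoints are pinned down: $f_m(0)=\tfrac12\Prob[\text{Binomial}(m,p)=0]=\tfrac12(1-p)^m$ gives $h(0)=g_m(0)=\tfrac12(1-p)^m>0$ for every $p\in(0,1)$, while $h(1/2)=0$.

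To locate the threshold I would analyse the slope $h'(1/2)=g_m'(1/2)-1$. A one-step coupling (switching a single child from red to blue and sharing its success indicator) shows that the increments $\Delta f_m(k):=f_m(k+1)-f_m(k)$ satisfy $\Delta f_m(k)=p\,\bigl(\Prob[U_k=0]+\tfrac12\Prob[U_k=1]+\tfrac12\Prob[U_k=-1]\bigr)$, where $U_k$ is the signed vote-count of $k$ blue and $m-1-k$ red children (each contributing $+1$, resp.\ $-1$, with probability $p$, and $0$ otherwise). Differentiating the Bernstein form \eqref{g_{m}^{abs}} and averaging over $k\sim\text{Binomial}(m-1,1/2)$ collapses this to the clean expression $g_m'(1/2)=m\,p\,\bigl(\Prob[W_{m-1}=0]+\Prob[W_{m-1}=1]\bigr)$, where $W_{m-1}$ is a sum of $m-1$ i.i.d.\ votes taking values $\pm1$ with probability $p/2$ each and $0$ otherwise. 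This vanishes at $p=0$ (there $g_m\equiv1/2$), equals the majority slope at $p=1$ (for instance $g_3'(1/2)=3/2$, and $>1$ for all $m\geqslant3$), and one verifies it is strictly increasing in $p$ (explicitly $g_3'(1/2)=3p-3p^2+\tfrac32p^3$ has positive derivative on $[0,1]$); hence there is a unique $p(m)\in(0,1)$ at which $g_m'(1/2)=1$. For $m=2$ the same formula gives slope $1-(1-p)^2\leqslant1$ with equality only at $p=1$, the degenerate case of Theorem~\ref{thm:main_m=2}, which is why a genuine three-fixed-point regime appears only for $m\geqslant3$.

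The hard part will be proving that $g_m$ has \emph{at most} three fixed points, i.e.\ that $h$ has at most one zero in $(0,1/2)$. My plan is to invoke the variation-diminishing property of the Bernstein basis: since $x=\sum_k(k/m)\binom{m}{k}x^k(1-x)^{m-k}$, the function $h=g_m-x$ has Bernstein coefficients $c_k:=f_m(k)-k/m$, and the number of zeros of $h$ in $(0,1)$ is bounded by the number of sign changes of the sequence $(c_0,\dots,c_m)$. The symmetry forces $c_{m-k}=-c_k$ with $c_0>0$, so this count is odd; the crux is to show it never exceeds $3$. I would deduce this from the claim that the increments $\Delta f_m(k)$ are \emph{unimodal} in $k$, peaking near the centre --- which is exactly where $U_k$ has mean zero and hence the local mass $\Prob[U_k\in\{-1,0,1\}]$ in the formula above is largest. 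Unimodality of $\Delta f_m(k)$ makes $f_m(k)$ a discrete convex-then-concave (S-shaped) sequence, so $c_k=f_m(k)-k/m$ changes sign at most three times, and whether it is one or three is dictated precisely by the sign of the central increment relative to $1/m$, i.e.\ by $g_m'(1/2)\lessgtr1$. Establishing this unimodality --- a statement about the concentration of a shifting difference of two binomials --- is the genuinely technical step, and I expect it to need a log-concavity or coupling argument rather than anything soft.

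Granting this shape lemma, the remaining assertions follow from monotone iteration of the recursion $\pi_{t+1}=g_m(\pi_t)$ supplied by Theorem~\ref{thm:main_general}. When $p\leqslant p(m)$ we have $h'(1/2)\leqslant0$, so $h>0$ throughout $[0,1/2)$ (its only zero is the endpoint $1/2$) and $1/2$ is the unique fixed point; Remark~\ref{rem:unique_fixed_point_attractive} then gives $\pi_t\to1/2$ for every $\pi_0$. When $p>p(m)$ we have $h'(1/2)>0$, so just left of $1/2$ we have $h<0$; combined with $h(0)>0$ and the at-most-one-zero conclusion, $h$ has a single interior zero $\alpha\in(0,1/2)$ with $h>0$ on $(0,\alpha)$ and $h<0$ on $(\alpha,1/2)$. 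Since $g_m$ is nondecreasing with $g_m(1/2)=1/2$, the interval $[0,1/2)$ is forward invariant, and the iterates increase toward $\alpha$ from below it and decrease toward $\alpha$ from above it, giving $\pi_t\to\alpha$ for every $\pi_0\in[0,1/2)$. The reflection $g_m(1-x)=1-g_m(x)$ transfers this to $\pi_t\to1-\alpha$ for $\pi_0\in(1/2,1]$, while $\pi_0=1/2$ stays fixed, which is precisely the claimed trichotomy.
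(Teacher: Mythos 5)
Your skeleton is sound and in fact parallels the paper's own proof: exploit the symmetry $f_{m}(k)+f_{m}(m-k)=1$ (hence $g_{m}(1-x)=1-g_{m}(x)$), establish an S-shape for $g_{m}$, reduce the one-versus-three fixed point dichotomy to the sign of $g'_{m}(1/2)-1$, obtain a unique threshold in $p$ from monotonicity of that slope, and finish by monotone iteration (your last paragraph is essentially Lemma~\ref{lem:fixed_point_count_g'(1/2)} plus Lemma~\ref{lem:increasing_function_limits}). Several of your computations are correct and rather clean: the coupling formula $\Delta f_{m}(k)=p\,\bigl(\Prob[U_{k}=0]+\tfrac12\Prob[U_{k}=1]+\tfrac12\Prob[U_{k}=-1]\bigr)$, the collapsed slope $g'_{m}(1/2)=mp\,\bigl(\Prob[W_{m-1}=0]+\Prob[W_{m-1}=1]\bigr)$, and the value $g'_{3}(1/2)\big|_{p=1}=3/2$, which agrees with the explicit cubic $3p-3p^{2}+\tfrac32 p^{3}$ of \S\ref{subsec:small_m} (the value $3$ asserted after \eqref{g'_{m}(1/2)} is a slip in the paper, immaterial there since only the bound $>1$ is used). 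Your increment formula also yields $\Delta f_{m}(k)\geqslant 0$, hence monotonicity of $g_{m}$, for free --- a fact you assert but never justify, and which the paper instead derives from the convexity structure. Your observation that $m=2$ admits no three-fixed-point regime is also correct, and quietly exposes that the statement is really about $m\geqslant 3$.

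The proposal nevertheless has two genuine gaps, and they are precisely the two results that carry the weight of the paper's proof. First, the shape lemma. You explicitly leave unproven the unimodality of $\Delta f_{m}(k)$, equivalently the discrete convex-then-concave shape of $k\mapsto f_{m}(k)$, calling it ``the genuinely technical step'' and only conjecturing that log-concavity or a coupling will deliver it. This is not a peripheral technicality: by the Bernstein second-derivative formula \eqref{g''(x)}, your claim is literally equivalent to Theorem~\ref{thm:convex_concave} (strict convexity of $g_{m}$ on $[0,1/2]$ and strict concavity on $[1/2,1]$), and the paper spends several pages proving it --- a seven-scenario coupling analysis reducing $f_{m}(\ell+2)-2f_{m}(\ell+1)+f_{m}(\ell)$ to $p^{2}$ times differences of point masses of two independent binomials, strictly positive because $\ell\leqslant\lceil m/2\rceil-2$. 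Everything downstream in your argument (the at-most-three count via variation diminishing, the sign pattern of $h$ on $(0,1/2)$, and the monotone convergence of the iterates toward $\alpha$) rests on this missing lemma. Second, the threshold. You prove strict monotonicity in $p$ of $g'_{m}(1/2)$ only for $m=3$ and dismiss the general case with ``one verifies''. This is not soft either: $mp$ increases in $p$ while $\Prob[W_{m-1}=0]+\Prob[W_{m-1}=1]$ equals $1$ at $p=0$ and is strictly smaller at $p=1$, so the product's monotonicity genuinely requires an argument; the paper devotes Theorem~\ref{thm:g'_{m}(1/2)_increasing} (a Russo-type computation showing $\frac{d}{dp}f_{m}^{\ell}(p)<0$ for all $\ell\leqslant\lfloor(m-1)/2\rfloor$) to exactly this point. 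Without it, continuity of $p\mapsto g'_{m}(1/2)$ and the endpoint values only give that the level $1$ is crossed somewhere, not the clean dichotomy ``unique fixed point for all $p\leqslant p(m)$, three fixed points for all $p>p(m)$''. In short: the architecture is right and some of your formulas would streamline the paper's exposition, but both load-bearing lemmas are left unproved.
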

\begin{remark}\label{rem:p_{B}=p_{R}_unique_regime_already_proved}
Note that in the regime where $p \leqslant p(m)$, since $g_{m}$ has a unique fixed point in $[0,1]$, and this equals $1/2$, hence, by Remark~\ref{rem:unique_fixed_point_attractive}, we already know that $\pi=1/2$ for each $\pi_{0} \in [0,1]$. Therefore, in the last assertion made in the statement of Theorem~\ref{thm:main_equal}, we need only focus on the regime where $p > p_{m}$.
\end{remark}

In particular, we show, via analytical methods in \S\ref{subsec:small_m}, that for $m=3$, the threshold $p(m) \approx 0.557507$, whereas for $m=4$, the threshold $p(m) \approx 0.42842$.

When $p_{B}=1$ and $p_{R} \in [0,1)$, our learning model is devoid of any phase transition for $m=2$ (which follows as a special case of Theorem~\ref{thm:main_m=2}), whereas a striking and rather non-intuitive phenomenon of phase transition is observed when $m=3$, as stated below:
\begin{theorem}\label{thm:unequal}
When $m=3$, $p_{B}=1$ and $p_{R} \in [0,1]$, the function $g_{m}$ defined in \eqref{g_{m}^{abs}} 
\begin{enumerate}
\item has a unique fixed point in $[0,1]$, which is $1$, when $p_{R} \in [0,\sqrt{3}-1)$,
\item has two distinct fixed points in $[0,1]$, namely $\alpha=2/3-1/\sqrt{3}$ and $1$, when $p_{R} = \sqrt{3}-1$,
\item and has three distinct fixed points in $[0,1]$, one of which is $1$, when $p_{R} \in (\sqrt{3}-1,1]$.
\end{enumerate}
Let $\pi_{0}$ be as defined in \eqref{initial}, and recall $\pi=\lim_{n \rightarrow \infty}\pi_{n}$, where $\pi_{n}$ is as defined in \eqref{time_t}. When $p_{R} \in [0,\sqrt{3}-1)$, we have $\pi=1$. When $p_{R} = \sqrt{3}-1$, we have $\pi=\alpha$ when $\pi_{0} \in [0,\alpha)$ and $\pi=1$ when $\pi_{0} \in (\alpha,1)$, and $\pi=\pi_{0}$ when $\pi_{0} \in \{\alpha,1\}$. When $p_{R} \in (\sqrt{3}-1,1)$, let the two fixed points \emph{other than} $1$ be denoted by $\alpha_{1}=\alpha_{1}(p_{R})$ and $\alpha_{2}=\alpha_{2}(p_{R})$, with $0 < \alpha_{1} < \alpha_{2} < 1$. In this case, we have $\pi=\alpha_{1}$ whenever $\pi_{0} \in [0,\alpha_{2})$, and $\pi=1$ whenever $\pi_{0} \in (\alpha_{2},1]$, whereas $\pi=1$ when $\pi_{0}=\alpha_{2}$.
\end{theorem}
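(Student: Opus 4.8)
The plan is to reduce the statement to the study of a single explicit cubic polynomial. Since $p_{B}=1$, the variable $A_{k}\sim\text{Binomial}(k,1)$ equals $k$ almost surely, so \eqref{f_{m}^{abs}} collapses to $f_{3}(0)=\tfrac12(1-p_{R})^{3}$, $f_{3}(1)=1-p_{R}$ and $f_{3}(2)=f_{3}(3)=1$. Substituting these into \eqref{g_{m}^{abs}} and writing $q=1-p_{R}$, I obtain $g_{3}$ explicitly as a cubic in $x$ whose coefficients are polynomials in $q$. A direct evaluation gives $g_{3}(1)=1$, so $x=1$ is a fixed point for every value of $p_{R}$; this is the fixed point common to all three regimes (i)--(iii).

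To count the remaining fixed points I would factor the cubic as $g_{3}(x)-x=(x-1)\,Q(x)$, where $Q$ is the explicit quadratic with leading coefficient $-\tfrac12(q^{3}-6q+4)$, and with $Q(0)=-\tfrac12 q^{3}$ and $Q(1)=-1$. Since both endpoint values are strictly negative for $p_{R}\in[0,1)$, neither $0$ nor $1$ can be an additional fixed point, and the only way to produce new fixed points in $(0,1)$ is for $Q$ to have two roots there. The discriminant of $Q$, after clearing denominators, equals a positive constant times $-(2q-1)(q^{2}-4q+1)$, so it vanishes in $[0,1]$ exactly at $q=\tfrac12$ and $q=2-\sqrt{3}$. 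The decisive point is that $Q$ acquires two roots inside $(0,1)$ precisely when (a) its leading coefficient is negative, i.e. $q<\sqrt{3}-1$, so that the parabola opens downward; (b) its vertex $x^{\ast}=(q^{3}-3q+1)/(q^{3}-6q+4)$ lies in $(0,1)$; and (c) its discriminant is positive. Checking these three conditions together shows they hold simultaneously if and only if $q<2-\sqrt{3}$, that is $p_{R}>\sqrt{3}-1$. At $q=2-\sqrt{3}$ the discriminant vanishes and the double root coincides with the vertex, equal to $(2-\sqrt{3})/3=2/3-1/\sqrt{3}=\alpha$, giving exactly two fixed points; for $q>2-\sqrt{3}$ the quadratic has no root in $(0,1)$ and $x=1$ is the unique fixed point. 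This establishes the trichotomy (i)--(iii).

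For the dynamical statements, I would use that by Theorem~\ref{thm:main_general}\eqref{gen:1} the parameters obey the deterministic recursion $\pi_{t+1}=g_{3}(\pi_{t})$, so the limit $\pi$ is determined by the one-dimensional dynamics of $g_{3}$ on $[0,1]$. First I would show $g_{3}$ is non-decreasing: rather than differentiating, I couple the colours and experiments of the three children across two initial parameters $x<x'$; because $p_{B}=1$, recolouring a child from $R$ to $B$ increases the number of $B$-successes by one and does not increase the number of $R$-successes, so the event that the agent updates to $B$ (a strict $B$-majority, or a tie resolved to $B$ by the common fair coin) is monotone in $x$. Since $g_{3}$ is continuous and non-decreasing, every orbit $\{\pi_{t}\}$ is monotone and converges to a fixed point, with its direction of motion dictated by the sign of $g_{3}(x)-x=(x-1)Q(x)$. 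Reading off this sign between consecutive fixed points then identifies each basin: for $p_{R}<\sqrt{3}-1$ one has $g_{3}(x)>x$ on $[0,1)$ (or one simply invokes Remark~\ref{rem:unique_fixed_point_attractive}), giving $\pi=1$ for every $\pi_{0}$; at $p_{R}=\sqrt{3}-1$ the point $\alpha$ is semistable, yielding $\pi=\alpha$ on $[0,\alpha)$ and $\pi=1$ on $(\alpha,1)$; and for $p_{R}\in(\sqrt{3}-1,1)$ the sign pattern $+,-,+$ on $(0,\alpha_{1})$, $(\alpha_{1},\alpha_{2})$, $(\alpha_{2},1)$ makes $\alpha_{1}$ attracting with basin $[0,\alpha_{2})$ and $1$ attracting with basin $(\alpha_{2},1]$, while the intermediate fixed point $\alpha_{2}$ is repelling.

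The main obstacle is the root-location bookkeeping of the second step. The discriminant of $Q$ has a zero at $q=\tfrac12$ (that is, $p_{R}=\tfrac12$) that is \emph{unrelated} to the phase transition, since there the double root sits at $x=-1/3\notin[0,1]$; moreover the leading coefficient of $Q$ itself changes sign at $q=\sqrt{3}-1$, so the parabola switches between opening downward and opening upward as $p_{R}$ varies. The delicate part is to combine the sign of the leading coefficient, the position of the vertex, and the sign of the discriminant so as to certify that the roots of $Q$ genuinely enter $(0,1)$ only at $p_{R}=\sqrt{3}-1$, and not at the spurious value $p_{R}=\tfrac12$. Everything else reduces to sign analysis of explicit low-degree polynomials and the standard convergence theory for monotone interval maps.
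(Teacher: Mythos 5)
Your proposal is correct and follows essentially the same route as the paper: explicit computation of $f_{3}(0)=\tfrac{1}{2}(1-p_{R})^{3}$, $f_{3}(1)=1-p_{R}$, $f_{3}(2)=f_{3}(3)=1$, factoring out the fixed point at $x=1$, locating the roots of the residual quadratic by combining the sign of its leading coefficient, the position of its vertex and the sign of its discriminant (this is exactly how the paper discards the spurious discriminant zero at $p_{R}=1/2$ and isolates the threshold $\sqrt{3}-1$, with double root $2/3-1/\sqrt{3}$ at equality), and then one-dimensional monotone dynamics for the basins. The only substantive difference is that you obtain monotonicity of $g_{3}$ by a coupling argument (valid since, with $p_{B}=1$, recolouring a child from $R$ to $B$ can only increase the $B$-success count minus the $R$-success count), whereas the paper computes $g'_{3}(x)=\frac{3}{2}(1-p_{R})\left(1+2p_{R}-p_{R}^{2}\right)(1-x)^{2}+6p_{R}x(1-x)\geqslant 0$ and invokes Lemma~\ref{lem:increasing_function_limits}; both are sound, and your coupling avoids the explicit derivative. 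One remark: for $\pi_{0}=\alpha_{2}$ your argument (like the paper's own Lemma~\ref{lem:increasing_function_limits}) yields $\pi=\alpha_{2}$, since $\alpha_{2}$ is a fixed point; the assertion ``$\pi=1$ when $\pi_{0}=\alpha_{2}$'' in the theorem statement is a typo that neither your proof nor the paper's establishes (nor could, as it is false).
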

The reason we describe this result as non-intuitive is as follows. The value $p_{B}=1$ ensures that \emph{every} experiment performed using the technology labeled $B$ is bound to result in a success. Evidently, the technology labeled $B$ is superior to the technology labeled $R$, unless $p_{R}=1$. This would seem to suggest that \emph{eventually}, the state of each vertex of $\mathbb{T}_{m}$ would be updated to $B$ and would remain that way, but this is not the case when $p_{R}$ is sufficiently large (i.e.\ for $p_{R} \geqslant \sqrt{3}-1$). The phase transition exhibited by our model in Theorem~\ref{thm:unequal}, therefore, seems unusual.

\subsection{Motivation and a brief discussion of relevant literature}\label{subsec:motivations_literature} 
As alluded to in \S\ref{subsec:overview}, there is a myriad of perspectives from which the study of our model can be motivated. For any two vertices $u, v \in V(\mathbb{T}_{m})$, with $u$ the parent of $v$, let us refer to the agent located at $v$ as an \emph{influencing neighbour} to the agent located at $u$. Our model can then be viewed as an interacting particle system in which each agent, at each time-step, is allowed to update their policy or protocol on some matter by taking note of the policies or protocols adopted, and the outcomes (such as rewards or penalties) obtained as a consequence, by its influencing neighbours at the \emph{previous} time-step. Each agent can adopt one of two available policies (such as a \emph{liberal} stance and a \emph{conservative} stance) at the beginning of time-step $t$, following which it executes a plan of action (as decreed by the adopted policy) \emph{during} the epoch $t$. Such a plan of action (which is a random experiment) has two possible outcomes: a \emph{desirable} one and an \emph{undesirable} one. At the end of time-step $t$, each agent counts the number of its influencing neighbours who adopted the liberal stance at the beginning of time-step $t$ and who were then rewarded with the desirable outcome, and the number of its influencing neighbours who adopted the conservative stance at the beginning of time-step $t$ and who were then rewarded with the desirable outcome. If the former exceeds the latter, the agent adopts the liberal stance at the beginning of time-step $t+1$, whereas if the latter exceeds the former, the agent adopts the conservative stance at the beginning of time-step $t+1$. If the two counts are equal, the agent decides which stance to adopt by tossing a fair coin and thereby breaking the tie.

The interpretation presented above is reminiscent of the well-known and well-studied \emph{voter model}. In the classical \emph{linear voter model} (see, for instance, \cite{holley1975ergodic} that introduced the voter model as a \emph{continuous time proximity process}, as well as \cite{liggett1985interacting}, \cite{liggett2013stochastic} and \cite{liggett1997stochastic} for a detailed review of old and new developments in this topic of research), individuals or agents occupy the elements (referred to as \emph{sites}) of an arbitrary countable set $S$, and each agent, at any given point in time, can have one of two possible opinions on an issue. For each site $x \in S$, at exponential times with a specified rate, the agent located at $x$ chooses a site $y \in S \setminus \{x\}$ with probability $p(x,y)$ (where $p(x,y) \geqslant 0$ for each $y \in S\setminus\{x\}$ and $\sum_{y \in S \setminus \{x\}}p(x,y)=1$) and adopts the opinion being currently held by the agent at $y$.

Yet another interpretation of the voter model, as suggested in \cite{clifford1973model}, is that of two species, fairly matched, competing for territory, and the invasion, by one species, of the territory belonging to the other. In our model, each vertex $v$ of $\mathbb{T}_{m}$ can be viewed as a city, and $B$ and $R$ can be thought of as two belligerent armies contesting each other. At any time-step $t$, the event $C_{t}(v)=B$ indicates that army $B$ retains dominion over the city $v$ at the \emph{beginning} of time-step $t$, but that there are ongoing clashes between $B$ and $R$ \emph{during} the epoch $t$. Conditioned on $C_{t}(v)=B$, the event $X_{t}(v)=1$ indicates that the \emph{defenders}, i.e.\ the armed forces of $B$, are able to defeat the \emph{invaders}, i.e.\ the armed forces of $R$, in the battle that takes place in $v$ during the epoch $t$. It is not only reasonable, but even practical, to assume that, in the event of $X_{t}(v)=0$, even though the invaders clinch a victory over the defenders, they incur heavy losses, and those among them that remain alive are not able to travel from $v$ to anywhere else for further attempts at conquests. A similar interpretation is true if $C_{t}(v)=R$. Letting $v_{1}, v_{2}, \ldots, v_{m}$ denote the children of $v$, one may imagine them to be, in some sense, ``border cities" or ``outposts" to the ``main city" $v$, with routes leading \emph{to} $v$ (along which armies can travel). From each outpost $v_{i}$ where the defenders $B$ defeat the invaders $R$ in the battle that takes place during the epoch $t$, a battalion of soldiers loyal to $B$ is dispatched to $v$ by the end of the epoch $t$. Likewise, from each $v_{i}$ where the defenders $R$ defeat the invaders $B$ in the battle that takes place during the epoch $t$, a battalion of soldiers loyal to $R$ is dispatched to $v$ by the end of the epoch $t$. The inequality $\sum_{i=1}^{m}X_{t}(v_{i})\mathbf{1}_{C_{t}(v_{i})=B} > \sum_{i=1}^{m}X_{t}(v_{i})\mathbf{1}_{C_{t}(v_{i})=R}$ then indicates that the soldiers who are loyal to $B$ and who travel to $v$ from its outposts outnumber the soldiers who are loyal to $R$ and who travel to $v$ from its outposts, resulting in $v$ becoming occupied by $B$ by the beginning of the epoch $t+1$. On the other hand, when $\sum_{i=1}^{m}X_{t}(v_{i})\mathbf{1}_{C_{t}(v_{i})=B} = \sum_{i=1}^{m}X_{t}(v_{i})\mathbf{1}_{C_{t}(v_{i})=R}$, the number of soldiers, loyal to $B$, who travel to $v$ from its outposts equals the number of soldiers, loyal to $R$, who travel to $v$ from its outposts, and in this case, $v$ is equally likely to be conquered by either of the two armies by the beginning of epoch $t+1$.  

Interacting particle systems are indispensable for modeling complex phenomena, in natural and social sciences, that involve a very large number of interrelated components (such as the flow of traffic on highways, interaction among the constituents of a cell, opinion dynamics, spread of epidemics or fires, reaction diffusion systems, crystal surface growth, chemotaxis etc.). For instance, the \emph{biased voter model}, a generalization of the voter model described above, has been studied in \cite{williams1972stochastic}, \cite{bramson1980williams}, \cite{bramson1981williams} and \cite{komarova2006spatial}, in the context of how cancerous cells spread and metastasize throughout the body of an organism. In \cite{mukhopadhyay2020voter}, \emph{binary opinion dynamics} in a fully connected network of interacting agents is studied, with agents being biased towards one of the two opinions and allowed to interact according to 
\begin{enumerate*}
\item the voter rule, where an updating agent simply copies the opinion of a different, randomly sampled, agent,
\item the majority rule, where an updating agent samples multiple other agents and adopts the opinion that is in majority among the members of this selected sample.
\end{enumerate*}
The diverse applications of the voter model and its variants in the field of social sciences, especially in the analysis of discrete and continuous opinion dynamics, can be seen in \cite{granovsky1995noisy}, \cite{mobilia2003does}, \cite{mobilia2005voting}, \cite{mobilia2007role}, \cite{congleton2004median}, \cite{sood2005voter}, \cite{sood2008voter}, \cite{masuda2010heterogeneous} and \cite{castellano2009nonlinear}, to name just a few. For the sake of completeness, we mention here that the two other extensively studied models of interacting particle systems are the \emph{contact processes} (see, for instance, \cite{griffeath1981basic}, \cite{durrett1988contact}, \cite{bezuidenhout1990critical} and \cite{liggett2013stochastic}) and the \emph{exclusion processes} (see, for instance, \cite{liggett1976coupling}, \cite{de1989weakly}, \cite{quastel1992diffusion}, \cite{sandow1994partially}, \cite{schutz1997exact}, \cite{rajewsky1998asymmetric}, \cite{tracy2008integral}, \cite{mallick2011some} and \cite{liggett2013stochastic}). 

Our learning model is also relevant in the context of \emph{social learning}, a term that was first coined in \cite{ellison1993rules}. In \cite{ellison1993rules}, each economic agent decides between two technologies, whose relative profitability is unknown, by taking into account the experiences of its neighbours and via exogenously specified rules of thumb that ignore historical data. Two learning environments are considered in \cite{ellison1993rules}, in one of which the same technology is considered optimal for \emph{all} agents, whereas in the other, each technology is better for \emph{some} of the agents. In \cite{ellison1995word}, economic agents rely on information obtained via word-of-mouth communication in order to make decisions, without knowing the costs and benefits of the available choices. Two scenarios are considered in \cite{ellison1995word}, the first of which allows a choice between two competing products with unequal qualities or payoffs, while the second allows a choice between two products that are equally good. In \cite{bala1998learning}, when the payoffs from the various possible actions are unknown, agents use their own past experience as well as the experiences of agents belonging to their \emph{neighbourhoods} to guide them in their decision-making, and a general framework to study the relationship between the structure of these neighbourhoods and the process of social learning is proposed and analysed. In \cite{bala2000noncooperative}, an approach to network formation is presented assuming that the link formed by one agent with another allows the former access, in part and in due course, to the benefits available to the latter via their own links, and that the cost of formation of a link is incurred only by the agent who initiates the formation, thereby allowing the network formation process to be formulated as a noncooperative game. In \cite{watts2001dynamic}, the process of network formation in a dynamic framework is analysed, with agents being allowed to form and sever links with one another, while in \cite{mele2017structural}, agents are assumed to meet sequentially at random, myopically updating their links. More about the topic of social learning, as well as about learning models based on networks of agents, can be found in \cite{goyal2012connections}, \cite{goyal2011learning}, \cite{mobius2014social}, \cite{fudenberg2009learning} and \cite{banerjee2004word}, among other resources.

Models for social learning serve as frameworks for the dissemination of technologies throughout a population of agents forming a specified network. In \cite{chatterjee2004technology}, in a population of myopic, memoryless agents stationed at the integer-points of $\mathbb{Z}$, each agent, at each time-step, performs an experiment using one of two technologies available to them, then observes the technology choices and the corresponding outcomes of its two nearest neighbours and of itself. Two learning rules are considered, with the first allowing an agent to change its technology only if it has experienced a failure, and the second allowing each agent to update its technology on the basis of the neighbourhood average. In \cite{bandyopadhyay2010on}, each $x \in \mathbb{Z}$ is occupied by an agent, and each agent, at each time-step, is allowed to adopt one of two available technologies: $B$ and $R$. An agent retains the technology it adopted in the previous time-step if it achieves success using this technology. Otherwise, it observes the technologies adopted, and the outcomes achieved as a consequence, by its two nearest neighbours, and it opts for a change, say, from $B$ to $R$, only if, among its neighbours and itself, the proportion of successes using $R$ is strictly bigger than the proportion of successes using $B$. In \cite{young2009innovation}, models for 
\begin{enumerate*}
\item the spread of a contagion,
\item the spread of social influence,
\item and social learning
\end{enumerate*}
are formulated at a high level of generality, allowing for essentially any distribution of heterogeneous characteristics among the agents concerned, and key differences in their dynamical structures and in their patterns of acceleration are pointed out. In \cite{lamberson2010social}, agents decide whether or not to adopt a new technology, with unknown payoffs, based on their prior beliefs and the experiences of their neighbours in the network, and it is shown, using mean-field approximations, that this process of diffusion always has at least one stable equilibrium. In \cite{sethi2016communication}, each agent
\begin{enumerate*}
\item possesses unobservable perspectives,
\item receives private information about the current state and forms an opinion,
\item and chooses a target agent (different from itself) and observes the target's opinion.
\end{enumerate*}
In \cite{chatterjee2016credibility}, an individual outside the network, termed the \emph{firm}, has privare information regarding the quality of the technology it seeks to diffuse throughout the network, and may pay some agent in the network, known as an \emph{implant}, to propagate its product or idea. Each agent observes the actions of its neighbours over time in order to decide whether or not to adopt the new technology. Agents are either \emph{innovators} who \emph{always} adopt the new technology, or \emph{standard players} who are fully rational and make decisions based on utility maximization. What the firm knows about the interactions of agents with other agents is restricted to the neighbours of the implant chosen, if any exists. In such a model, learning occurs via strategic choices by agents, and this plays a role in determining whether diffusion occurs throughout the whole population or dies out within some finite distance of the origin. Numerous other works exist in the literature on models studying the diffusion of technologies, such as \cite{tsakas2017diffusion}, \cite{namatame2009agent}, \cite{tsakas2024optimal}, \cite{namatame2010diffusion} etc.

A third motivation for studying our model arises from its interpretation as a \emph{probabilistic tree automaton} (PTA). PTAs admit a very general definition, such as has been discussed in \cite{ellis1970probabilistic}, but in this case, it suffices to define a PTA (on $\mathbb{T}_{m}$) as a \emph{state machine} that consists of two components, namely
\begin{enumerate*}
\item a finite set $\mathcal{A}$ of symbols or colours known as the \emph{alphabet}, and
\item a stochastic update rule given by a stochastic matrix $\varphi: \mathcal{A}^{m} \times \mathcal{A} \rightarrow [0,1]$.
\end{enumerate*} 
For any vertex $v \in V(\mathbb{T}_{m})$, letting $v_{1}, v_{2}, \ldots, v_{m}$ denote its children, and conditioned on $\left(\eta_{t}(v_{1}), \eta_{t}(v_{2}), \ldots, \eta_{t}(v_{m})\right)$, where $\eta_{t}(v_{i}) \in \mathcal{A}$ denotes the symbol assigned to $v_{i}$ at time-step $t$ for each $i \in \{1,2,\ldots,m\}$, the symbol assigned to $v$ at time-step $t+1$ is a random variable $\eta_{t+1}(v)$ whose probability distribution is given by
\begin{equation}
\Prob\left[\eta_{t+1}(v)=\alpha\Big|\eta_{t}(v_{1}), \eta_{t}(v_{2}), \ldots, \eta_{t}(v_{m})\right] = \varphi\left(\left(\eta_{t}(v_{1}), \eta_{t}(v_{2}), \ldots, \eta_{t}(v_{m})\right), \alpha\right), \text{ for each } \alpha \in \mathcal{A}.\nonumber
\end{equation}
The update from $\eta_{t}(v)$ to $\eta_{t+1}(v)$ is assumed to happen independently over all $v \in V(\mathbb{T}_{m})$. One of the primary questions asked about a PTA is the enumeration and characterization of all its \emph{fixed points}: a joint law $\nu$ of $\left\{\eta_{0}(v): v \in V(\mathbb{T}_{m})\right\}$ is called a fixed point of the PTA under consideration if the joint law of $\left\{\eta_{1}(v): v \in V(\mathbb{T}_{m})\right\}$ is also $\nu$. This is also the question we seek to answer in this paper for our model. More on PTAs (and their deterministic counterparts, endowed with update rules that are deterministic), as well as their applications, can be found in \cite{cohen2009running}, \cite{bernabeu2011melodic}, \cite{carayol2014randomization} and \cite{comon2008tree}, among others.

\subsection{Organization of this paper}\label{subsec:org} The rest of this paper is organized as follows. The proof of Theorem~\ref{thm:main_general} has been presented in \S\ref{sec:analysis_general}, with \S\ref{subsec:part_1_general}, \S\ref{subsec:part_2_general} and \S\ref{subsec:gen:3_proof} respectively addressing parts \eqref{gen:1}, \eqref{gen:2} and \eqref{gen:3}. The proof of Theorem~\ref{thm:main_m=2} has been outlined in \S\ref{sec:analysis_m=2}. Theorem~\ref{thm:main_equal} has been proved in \S\ref{sec:analysis_equal}, and this section includes, as part of the proof, the statements and proof of the crucial Theorem~\ref{thm:convex_concave}, Lemma~\ref{lem:fixed_point_count_g'(1/2)} and Theorem~\ref{thm:g'_{m}(1/2)_increasing}. It also includes \S\ref{subsec:small_m}, where the value of $p(m)$, as defined in the statement of Theorem~\ref{thm:main_equal}, has been explicitly computed, by analytical means, for $m=3$ and $m=4$. The paper comes to a conclusion with \S\ref{sec:p_{B}=1} which contains the proof of Theorem~\ref{thm:unequal}.

\section{Proof of Theorem~\ref{thm:main_general}}\label{sec:analysis_general}
\subsection{Proof of \eqref{gen:1} of Theorem~\ref{thm:main_general}}\label{subsec:part_1_general} Recall, from \S\ref{subsec:abs_maj}, the description of the model we work with when we consider the absolute majority policy. From \eqref{initial}, we already know that the random variables $C_{0}(v)$, for $v \in V(\mathbb{T}_{m})$, are i.i.d.,\ proving the base case of the inductive argument we plan to outline for the proof of \eqref{gen:1}, the first assertion of Theorem~\ref{thm:main_general}. Let us assume that the random variables $C_{t}(v)$, for $v \in V(\mathbb{T}_{m})$, have been shown to be i.i.d.,\ with their common distribution given by \eqref{time_t}, for each $t \leqslant n$, for some $n \in \mathbb{N}$. We now consider $C_{n+1}(v)$, for $v \in V(\mathbb{T}_{m})$.

Let $S \subset V(\mathbb{T}_{m})$ denote any \emph{finite} subset of vertices of our tree $\mathbb{T}_{m}$. For each $v \in V(\mathbb{T}_{m})$, let $\Gamma(v)$ indicate the set of all children of $v$. Recall, from \S\ref{subsec:abs_maj}, the random variables $X_{t}(v)$ and $Y_{t}(v)$, for each $v \in V(\mathbb{T}_{m})$ and each $t \in \mathbb{N}_{0}$. Since our induction hypothesis states that the collection $\left\{C_{t}(v): v \in V(\mathbb{T}_{m})\right\}$ consists of i.i.d.\ random variables for \emph{each} $t \leqslant n$, and since $\Gamma(v) \cap \Gamma(v') = \emptyset$ for all $v, v' \in V(\mathbb{T}_{m})$ with $v \neq v'$, hence the random tuples $\left(C_{n}(u): u \in \Gamma(v)\right)$ are also i.i.d.\ over \emph{all} $v \in S$. Next, recall from \S\ref{subsec:abs_maj} that, for each $t \in \mathbb{N}_{0}$, the collection $\left\{X_{t}(v): v \in V(\mathbb{T}_{m})\right\} \bigcup \left\{Y_{t}(v): v \in V(\mathbb{T}_{m})\right\}$ is independent. Therefore, the entire collection $\left\{\left(X_{n}(u): u \in \Gamma(v)\right): v \in S\right\} \bigcup \left\{Y_{n}(v): v \in S\right\}$ is independent. 

From \eqref{abs_maj_rule}, it is evident that $C_{n+1}(v)$ is a function of $C_{n}(u)$ and $X_{n}(u)$ for each $u \in \Gamma(v)$, as well as of $Y_{n}(v)$. The discussion in the previous paragraph implies the independence of $\left\{\left(C_{n}(u): u \in \Gamma(v)\right), \left(X_{n}(u): u \in \Gamma(v)\right), Y_{n}(v)\right\}$ over all $v \in S$. This, in turn, implies the independence of $C_{n+1}(v)$ over all $v \in S$. That $C_{n+1}(v)$ has the same distribution for each $v \in V(\mathbb{T}_{m})$ becomes evident from \eqref{abs_maj_rule} (which ensures that the conditional distribution of $C_{n+1}(v)$, conditioned on $C_{n}(u), u \in \Gamma(v)$, is the same for all $v \in V(\mathbb{T}_{m})$) and our induction hypothesis that $\left\{C_{n}(v): v \in V(\mathbb{T}_{m})\right\}$ forms an i.i.d.\ collection.  

\subsection{Proof of \eqref{gen:2} of Theorem~\ref{thm:main_general}}\label{subsec:part_2_general} Having established \eqref{gen:1} of Theorem~\ref{thm:main_general}, let the distribution of $C_{t}(v)$ be as described by \eqref{time_t}, for each $v \in V(\mathbb{T}_{m})$ and each $t \in \mathbb{N}$. Recall that $\nu_{t}$ is the joint law of $\left\{C_{t}(v): v \in V(\mathbb{T}_{m})\right\}$.

We first show that if the sequence $\left\{\pi_{t}\right\}_{t \in \mathbb{N}_{0}}$ converges to a limit $\pi$, then $\left\{\nu_{t}\right\}_{t \in \mathbb{N}_{0}}$ converges to the law $\nu$ of the i.i.d.\ collection of random variables $\left\{C_{\infty}(v): v \in V(\mathbb{T}_{m})\right\}$, where the common distribution of each $C_{\infty}(v)$ is as given in \eqref{limit_pi}. Note that each $\nu_{t}$ is defined on $\{B,R\}^{V(\mathbb{T}_{m})}$, which is compact by Tychonoff's theorem (since the set $\{B,R\}$ is compact), so that the sequence $\left\{\nu_{t}\right\}_{t \in \mathbb{N}_{0}}$ is already tight. Therefore, in order to show that $\left\{\nu_{t}\right\}_{t \in \mathbb{N}_{0}}$ converges to $\nu$, it suffices to show that all finite-dimensional marginals of $\left\{\nu_{t}\right\}_{t \in \mathbb{N}_{0}}$ converge to the corresponding marginals of $\nu$.

To this end, for any $k \in \mathbb{N}$, let $v_{1}, v_{2}, \ldots, v_{k}$ be \emph{any} $k$ vertices in $V(\mathbb{T}_{m})$. Under $\nu_{t}$, the joint law of $\left(C_{t}(v_{1}), C_{t}(v_{2}), \ldots, C_{t}(v_{k})\right)$ is given by
\begin{equation}
\nu_{t}\Big|_{v_{1},v_{2},\ldots,v_{k}}(\eta_{1},\eta_{2},\ldots,\eta_{k}) = \pi_{t}^{\sum_{i=1}^{k}\mathbf{1}_{\eta_{i}=B}}\left(1-\pi_{t}\right)^{k-\sum_{i=1}^{k}\mathbf{1}_{\eta_{i}=B}},\label{lem:1_eq_1}
\end{equation}
for each $(\eta_{1},\eta_{2},\ldots,\eta_{k}) \in \{B,R\}^{k}$. Since $\pi_{t} \rightarrow \pi$ as $t \rightarrow \infty$, and the function in the right side of \eqref{lem:1_eq_1} is a continuous one (in fact, it is a polynomial in $\pi_{t}$), hence 
\begin{equation}
\lim_{t \rightarrow \infty}\pi_{t}^{\sum_{i=1}^{k}\mathbf{1}_{\eta_{i}=B}}\left(1-\pi_{t}\right)^{k-\sum_{i=1}^{k}\mathbf{1}_{\eta_{i}=B}} = \pi^{\sum_{i=1}^{k}\mathbf{1}_{\eta_{i}=B}}\left(1-\pi\right)^{k-\sum_{i=1}^{k}\mathbf{1}_{\eta_{i}=B}} = \nu\Big|_{v_{1},v_{2},\ldots,v_{k}}(\eta_{1},\eta_{2},\ldots,\eta_{k}),\nonumber
\end{equation}
for each $(\eta_{1},\eta_{2},\ldots,\eta_{k}) \in \{B,R\}^{k}$, thus completing the proof of one side of the two-way implication in \eqref{gen:2}.

Conversely, let $\left\{\nu_{t}\right\}_{t \in \mathbb{N}_{0}}$ converge to a distributional limit $\nu$ as $t \rightarrow \infty$. Fix \emph{any} vertex $v$ of $\mathbb{T}_{m}$, and let $C_{\infty}(v)$ denote the random variable that indicates the state of $v$ under the measure $\nu$. Since $C_{\infty}(v)$ takes only the values $B$ and $R$, its distribution must be of the form given in \eqref{limit_pi}, i.e.\ $C_{\infty}(v)=B$ with probability $\pi$ and $C_{\infty}(v)=R$ with probability $1-\pi$. Note that the probability distribution of $C_{\infty}(v)$ is the marginal of the law $\nu$ when we restrict ourselves to the vertex $v$, just as the probability distribution of $C_{t}(v)$ is the marginal of $\nu_{t}$ when we restrict ourselves to $v$. Since $\nu_{t}$ converges to $\nu$ as $t \rightarrow \infty$, we conclude that $C_{t}(v)$ converges in distribution to $C_{\infty}(v)$ as $t \rightarrow \infty$. Since $C_{t}(v)$ is of the form given by \eqref{time_t}, we conclude that $\pi_{t} \rightarrow \pi$ as $t \rightarrow \infty$.

\subsection{Proof of \eqref{gen:3} of Theorem~\ref{thm:main_general}}\label{subsec:gen:3_proof}
We begin by establishing a recurrence relation connecting $\pi_{t}$ with $\pi_{t+1}$, for each $t \in \mathbb{N}_{0}$. Fix $v \in V(\mathbb{T}_{m})$ and let $v_{1}, v_{2}, \ldots, v_{m}$ denote its children. Fix any $k \in \{0,1,\ldots,m\}$. From \eqref{gen:1}, we know that $C_{t}(v_{1})$, $C_{t}(v_{2})$, $\ldots$, $C_{t}(v_{m})$ are i.i.d.\ with each of them following the distribution given by \eqref{time_t}. Hence, the probability that precisely $k$ of these $m$ random variables equal $B$ and the remaining $m-k$ equal $R$ is given by
\begin{equation}
\Prob\left[\sum_{i=1}^{m}\mathbf{1}_{C_{t}(v_{i})=B}=k\right] = {m \choose k}\pi_{t}^{k}(1-\pi_{t})^{m-k}.\label{intermediate_0}
\end{equation}
Conditioned on the event that $C_{t}(v_{i})=B$ for each $i \in \{1,2,\ldots,k\}$, and $C_{t}(v_{j})=R$ for each $j \in \{k+1,\ldots,m\}$, the random variables $X_{t}(v_{1}), \ldots, X_{t}(v_{k})$ are i.i.d.\ Bernoulli$(p_{B})$, while the random variables $X_{t}(v_{k+1}), \ldots, X_{t}(v_{m})$ are i.i.d.\ Bernoulli$(p_{R})$. The absolute majority policy, defined in \eqref{abs_maj_rule}, yields
\begin{align}
{}&\Prob\left[C_{t+1}(v)=B\Big|\text{precisely } k \text{ children of } v \text{ are in state } B \text{ at time-step } t\right] \nonumber\\
={}& \Prob\left[C_{t+1}(v)=B\Big|\sum_{i=1}^{m}\mathbf{1}_{C_{t}(v_{i})=B}=k\right]\nonumber\\
={}&\Prob\left[C_{t+1}(v)=B\Big|C_{t}(v_{i})=B \text{ for } i \in \{1,\ldots,k\}, C_{t}(v_{j})=R \text{ for } j \in \{k+1,\ldots,m\}\right]\nonumber\\
={}&\Prob\left[\sum_{i=1}^{k}X_{t}(v_{i}) > \sum_{j=k+1}^{m}X_{t}(v_{j})\right]+\Prob\left[\sum_{i=1}^{k}X_{t}(v_{i}) = \sum_{j=k+1}^{m}X_{t}(v_{j}), Y_{t}(v)=1\right]\nonumber\\
={}& \Prob\left[A_{k} > B_{m-k}\right]+\frac{1}{2}\Prob\left[A_{k}=B_{m-k}\right],\label{intermediate_1}
\end{align}
where we set $A_{k} = \sum_{i=1}^{k}X_{t}(v_{i})$ and $B_{m-k} = \sum_{j=k+1}^{m}X_{t}(v_{j})$, so that, conditioned on the event mentioned above, $A_{k}$ follows Binomial$(k,p_{B})$ and $B_{m-k}$ follows Binomial$(m-k,p_{R})$. When $k=0$ (respectively, when $k=m$), the random variable $A_{0}$ (respectively, $B_{0}$) is simply degenerate at $0$. The expression obtained in the last step of \eqref{intermediate_1} is precisely $f_{m}(k)$ as defined in \eqref{f_{m}^{abs}}. From \eqref{intermediate_0} and \eqref{intermediate_1}, the unconditional probability of $v$ being in state $B$ at time-step $t+1$ is given by
\begin{align}
\pi_{t+1} ={}& \Prob\left[C_{t+1}(v)=B\right] = \sum_{k=0}^{m}\Prob\left[C_{t+1}(v)=B\Big|\sum_{i=1}^{m}\mathbf{1}_{C_{t}(v_{i})=B}=k\right]\Prob\left[\sum_{i=1}^{m}\mathbf{1}_{C_{t}(v_{i})=B}=k\right]\nonumber\\
={}& \sum_{k=0}^{m}\left\{\Prob\left[A_{k} > B_{m-k}\right]+\frac{1}{2}\Prob\left[A_{k}=B_{m-k}\right]\right\}{m \choose k}\pi_{t}^{k}(1-\pi_{t})^{m-k} = \sum_{k=0}^{m}f_{m}(k){m \choose k}\pi_{t}^{k}(1-\pi_{t})^{m-k},\nonumber
\end{align}
so that, from \eqref{g_{m}^{abs}}, we can write
\begin{equation}
\pi_{t+1} = g_{m}(\pi_{t}) \text{ for each } t \in \mathbb{N}_{0}.\label{intermediate_2}
\end{equation}
When $\pi=\lim_{t \rightarrow \infty}\pi_{t}$ exists, taking the limit, as $t \rightarrow \infty$, on both sides of \eqref{intermediate_2} and using the fact that $g_{m}$ is a polynomial and hence a continuous function, we obtain 
\begin{equation}
\pi = g_{m}(\pi),\label{fixed_point}
\end{equation}
i.e.\ we conclude that $\pi$ is a fixed point of $g_{m}$, in the interval $[0,1]$. This concludes the proof of \eqref{gen:3} of Theorem~\ref{thm:main_general}.

\section{Proof of Theorem~\ref{thm:main_m=2}}\label{sec:analysis_m=2}
The crux of the proof lies in studying the function $g_{2}$. From \eqref{f_{m}^{abs}}, we see that
\begin{align}
{}&f_{2}(0) = \frac{1}{2}\Prob[B_{2}=0] = \frac{1}{2}(1-p_{R})^{2},\nonumber\\
{}&f_{2}(1) = \Prob[A_{1}=1,B_{1}=0]+\frac{1}{2}\Prob[A_{1}=B_{1}=1]+\frac{1}{2}\Prob[A_{1}=B_{1}=0] = p_{B}(1-p_{R})+\frac{p_{B}p_{R}}{2}+\frac{(1-p_{B})(1-p_{R})}{2},\nonumber\\
{}&f_{2}(2) = \Prob[A_{2}\geqslant 1]+\frac{1}{2}\Prob[A_{2}=0] = 1-\frac{1}{2}\Prob[A_{2}=0] = 1-\frac{(1-p_{B})^{2}}{2}.\nonumber
\end{align}
Substituting these in \eqref{g_{m}^{abs}}, we obtain:
\begin{align}
g_{2}(x) ={}& \frac{1}{2}(1-p_{R})^{2}(1-x)^{2}+2\left\{p_{B}(1-p_{R})+\frac{p_{B}p_{R}}{2}+\frac{(1-p_{B})(1-p_{R})}{2}\right\}x(1-x)+\left\{1-\frac{(1-p_{B})^{2}}{2}\right\}x^{2},\nonumber
\end{align}
so that
\begin{align}
\frac{d^{2}}{dx^{2}}g_{2}(x)={}& (1-p_{R})^{2}-4p_{B}(1-p_{R})-2p_{B}p_{R}-2(1-p_{B})(1-p_{R})+2-(1-p_{B})^{2}= p_{R}^{2}-p_{B}^{2},\nonumber
\end{align}
thus implying that $g_{2}$ is strictly convex throughout $[0,1]$ for $p_{R} > p_{B}$, strictly concave throughout $[0,1]$ for $p_{R} < p_{B}$, and linear when $p_{B}=p_{R}$. Furthermore, we have
\begin{equation}
g_{2}(0)=\frac{1}{2}(1-p_{R})^{2} > 0 \text{ for each } p_{R} \in [0,1) \text{ and } g_{2}(1)=1-\frac{(1-p_{B})^{2}}{2}<1 \text{ for all } p_{B} \in [0,1).\nonumber
\end{equation}
Consequently, the curve $y=g_{2}(x)$ lies \emph{above} the line $y=x$ at $x=0$ and \emph{beneath} the line $y=x$ at $x=1$, when $p_{B}, p_{R} \in [0,1)$. The two observations mentioned above, when combined, allow us to conclude that there is a unique point of intersection between $y=g_{2}(x)$ and $y=x$. When $p_{R}=1$ and $p_{B} \in [0,1)$, this unique point of intersection is $x=0$, whereas when $p_{B}=1$ and $p_{R} \in [0,1)$, it is $x=1$. 

Let us now focus on the special case of $p_{B}=p_{R}=p \in [0,1]$. This yields
\begin{equation}
g_{2}(x) = \frac{1}{2}(1-p)^{2}+(2p-p^{2})x,\nonumber
\end{equation}
which, in turn, yields the unique fixed point $x=1/2$ when $p\in [0,1)$, whereas when $p=1$, $y=g_{2}(x)$ coincides with the line $y=x$, and therefore, the entire $[0,1]$ constitutes the set of fixed point of $g_{2}$ in that case.

\section{Proof of Theorem~\ref{thm:main_equal}}\label{sec:analysis_equal}
We begin by summarizing the broad steps via which we accomplish the proof of Theorem~\ref{thm:main_equal} (for each fixed but arbitrary $m \in \mathbb{N}$ with $m \geqslant 2$):
\begin{enumerate}
\item \label{broad_step_1} We begin by observing that $1/2$ is always a fixed point of $g_{m}$, and that $\alpha \in [0,1]$ is a fixed point of $g_{m}$ if and only if $1-\alpha$ is also a fixed point of $g_{m}$.
\item \label{broad_step_2} In Theorem~\ref{thm:convex_concave}, we establish that the function $g_{m}$ is strictly convex on $[0,1/2]$ and strictly concave on $[1/2,1]$ for every $p \in (0,1]$ (for $p=0$, we show that $g_{m}(x)=1/2$ for all $x \in [0,1]$). This, in turn, allows us to conclude that the curve $y=g_{m}(x)$ may intersect the line $y=x$ at most twice in the interval $[0,1/2]$, and at most twice in $[1/2,1]$. Because of \eqref{broad_step_1}, we have two possibilities:
\begin{enumerate}
\item either $g_{m}$ has a unique fixed point, $1/2$, in the interval $[0,1]$,
\item \label{three_fixed_points} or $g_{m}$ has precisely three fixed points in $[0,1]$, and these are of the form $\alpha$, $1/2$ and $1-\alpha$, for some $\alpha \in [0,1/2)$.
\end{enumerate}
\item \label{broad_step_3} We then show, in Lemma~\ref{lem:fixed_point_count_g'(1/2)}, that $g_{m}$ has a unique fixed point in $[0,1]$ if and only if its slope at $1/2$ is bounded above by $1$, i.e.\ $g'_{m}(1/2) \leqslant 1$. 
\item \label{broad_step_4} In Theorem~\ref{thm:g'_{m}(1/2)_increasing}, we show that the derivative $g'_{m}(1/2)$ of $g_{m}$ at $1/2$ is strictly increasing as a function of $p$, for $p \in (0,1)$. This is followed by showing that $g'_{m}(1/2)=0$ at $p=0$, and $g'_{m}(1/2)>1$ at $p=1$. Together, these guarantee the existence of a unique $p(m) \in (0,1)$ such that $g'_{m}(1/2)\leqslant 1$ for all $p \in [0,p(m)]$ and $g'_{m}(1/2)>1$ for all $p \in (p(m),1]$.
\item \label{broad_step_5} From the final conclusion of \eqref{broad_step_4}, from \eqref{broad_step_3} and from \eqref{three_fixed_points}, we conclude that 
\begin{enumerate}
\item $g_{m}$ has a unique fixed point in $[0,1]$ for each $p \in [0,p(m)]$, 
\item and $g_{m}$ has precisely three fixed points, of the form $\alpha$, $1/2$ and $1-\alpha$, for some $\alpha = \alpha(p) < 1/2$, for each $p \in (p(m),1]$.
\end{enumerate}
This completes the proofs of \eqref{thm:equal_part_1} and \eqref{thm:equal_part_2} of Theorem~\ref{thm:main_equal}.
\end{enumerate}

As outlined above, we begin with a couple of simple observations, as follows. When $p_{B}=p_{R}=p$, in the definition of $f_{m}$ in \eqref{f_{m}^{abs}}, the random variable $A_{k}$ follows Binomial$(k,p)$ and the random variable $B_{m-k}$ follows Binomial$(m-k,p)$, for each $k \in \{0,1,\ldots,m\}$. This allows us to write
\begin{align}
f_{m}(m-k) ={}& \Prob\left[A_{m-k} > B_{k}\right] + \frac{1}{2}\Prob\left[A_{m-k}=B_{k}\right]\nonumber\\
={}& 1 - \Prob\left[A_{m-k} < B_{k}\right] - \Prob\left[A_{m-k}=B_{k}\right] + \frac{1}{2}\Prob\left[A_{m-k}=B_{k}\right]\nonumber\\
={}& 1 - \Prob\left[A_{k} > B_{m-k}\right] - \frac{1}{2}\Prob\left[A_{k}=B_{m-k}\right] = 1-f_{m}(k).\label{symm_cond}
\end{align}
In particular, when $m$ is even, this implies that $f_{m}(m/2) = 1/2$. We refer to this condition, which boils down to the identity $f_{m}(k)+f_{m}(m-k)=1$ for each $k \in \{0,1,\ldots,m\}$, as the \emph{symmetry criterion}. Next, using \eqref{symm_cond}, we have, for any $\alpha \in [0,1]$,
\begin{align}
g(\alpha) ={}& \sum_{k=0}^{m}f_{m}(k){m \choose k}\alpha^{k}(1-\alpha)^{m-k} = \sum_{k=0}^{m}\left\{1-f_{m}(m-k)\right\}{m \choose k}\alpha^{k}(1-\alpha)^{m-k} \nonumber\\
={}& \sum_{k=0}^{m}{m \choose k}\alpha^{k}(1-\alpha)^{m-k} - \sum_{k=0}^{m}f_{m}(m-k){m \choose m-k}\alpha^{k}(1-\alpha)^{m-k} = 1-g(1-\alpha),\nonumber
\end{align}
which immediately tells us that when $p_{B}=p_{R}$,
\begin{enumerate}
\item \label{basic_conclusion_1} $1/2$ is \emph{always} a fixed point of $g_{m}$, 
\item \label{basic_conclusion_2} $\alpha \in [0,1]$ is a fixed point of $g_{m}$ if and only if $1-\alpha$ is as well.
\end{enumerate}

This completes \eqref{broad_step_1} and, we now come to \eqref{broad_step_2}. In order to prove Theorem~\ref{thm:main_equal}, it is crucial that we understand what the function $g_{m}$ looks like in the interval $[0,1]$. This is what is captured in Theorem~\ref{thm:convex_concave} below:
\begin{theorem}\label{thm:convex_concave}
For each $p \in (0,1]$, the curve $g_{m}$, defined as in \eqref{g_{m}^{abs}} (with $f_{m}$ as defined in \eqref{f_{m}^{abs}}), is strictly convex on the interval $[0,1/2]$ and strictly concave on the interval $[1/2,1]$.
\end{theorem}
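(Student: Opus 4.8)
The plan is to reduce everything to the inequality $g_m''(x)>0$ on the open interval $(0,1/2)$. Indeed, the reflection identity $g_m(x)=1-g_m(1-x)$ (the integrated form of the symmetry criterion $f_m(k)+f_m(m-k)=1$ in \eqref{symm_cond}) gives $g_m''(x)=-g_m''(1-x)$, so positivity of $g_m''$ on $(0,1/2)$ automatically yields $g_m''(1/2)=0$ and strict negativity on $(1/2,1)$, i.e.\ strict concavity on $[1/2,1]$. To get at $g_m''$, I would differentiate the Bernstein representation \eqref{g_{m}^{abs}} twice, using the standard identity
\[
g_m''(x)=m(m-1)\sum_{j=0}^{m-2}\bigl(\Delta^2 f_m\bigr)(j)\,\binom{m-2}{j}x^{j}(1-x)^{m-2-j},\qquad \bigl(\Delta^2 f_m\bigr)(j):=f_m(j+2)-2f_m(j+1)+f_m(j).
\]
The symmetry criterion propagates to the second differences as $(\Delta^2 f_m)(j)=-(\Delta^2 f_m)(m-2-j)$, so pairing $j$ with $m-2-j$ (and using $\binom{m-2}{j}=\binom{m-2}{m-2-j}$) collapses the sum to
\[
g_m''(x)=m(m-1)\sum_{0\le j<(m-2)/2}\bigl(\Delta^2 f_m\bigr)(j)\,\binom{m-2}{j}\Bigl[x^{j}(1-x)^{m-2-j}-x^{m-2-j}(1-x)^{j}\Bigr].
\]
For $x\in(0,1/2)$ each bracket is strictly positive, since the ratio of its two monomials is $\bigl((1-x)/x\bigr)^{m-2-2j}>1$. (The sum is nonempty precisely for $m\ge 3$; the case $m=2$, where $g_2$ is linear, is the degenerate boundary already covered by Theorem~\ref{thm:main_m=2}.) Hence it remains to prove that $(\Delta^2 f_m)(j)\ge 0$ for every $j<(m-2)/2$, with at least one strict inequality.

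To evaluate the second difference I would pass to a probabilistic representation. Writing $\psi(z):=\mathbf 1_{z>0}+\tfrac12\mathbf 1_{z=0}$, we have $f_m(k)=\E[\psi(T_k)]$ with $T_k=\sum_{i\le k}\xi_i-\sum_{i>k}\xi_i$, where $\xi_1,\dots,\xi_m$ are i.i.d.\ $\ber(p)$ (the first $k$ being the ``$B$-trials''). Setting $V:=\sum_{i\le j}\xi_i-\sum_{i\ge j+3}\xi_i$, which is a $\bin(j,p)-\bin(m-2-j,p)$ variable independent of $(\xi_{j+1},\xi_{j+2})$, one has $T_j=V-\xi_{j+1}-\xi_{j+2}$, $T_{j+1}=V+\xi_{j+1}-\xi_{j+2}$ and $T_{j+2}=V+\xi_{j+1}+\xi_{j+2}$. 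Averaging $\psi(T_{j+2})-2\psi(T_{j+1})+\psi(T_j)$ over the four values of $(\xi_{j+1},\xi_{j+2})$, the $(0,0)$ outcome contributes $0$ and the two mixed outcomes $(1,0)$ and $(0,1)$ contribute opposite quantities that cancel, leaving only the $(1,1)$ term, so that
\[
\bigl(\Delta^2 f_m\bigr)(j)=p^{2}\,\E\bigl[\psi(V+2)-2\psi(V)+\psi(V-2)\bigr]=p^{2}\Bigl[\tfrac12 s(-2)+s(-1)-s(1)-\tfrac12 s(2)\Bigr],
\]
where $s(v):=\Prob[V=v]$, the bracketed weights being the values of the second difference $\psi(v+2)-2\psi(v)+\psi(v-2)$, which is supported on $v\in\{-2,-1,1,2\}$.

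The sign is then controlled by the claim that $\Prob[V=-v]\ge\Prob[V=v]$ for $v=1,2$ whenever $j\le m-2-j$. I would prove this by decomposing $V=D-B''$, where $D$ is a difference of two independent $\bin(j,p)$ variables (hence symmetric about $0$) and $B''\sim\bin(m-2-2j,p)\ge 0$ is independent of $D$. Because the binomial law is log-concave and log-concavity is preserved under convolution and reflection, $D$ is log-concave, hence unimodal, so $\Prob[D=\cdot]$ is nonincreasing in $|\cdot|$; conditioning on $B''=c\ge 0$ and using $|v-c|\le v+c$ gives $\Prob[V=-v]-\Prob[V=v]=\sum_{c\ge 0}\Prob[B''=c]\bigl(\Prob[D=|v-c|]-\Prob[D=v+c]\bigr)\ge 0$. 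For $p\in(0,1)$ the $c=1$ summand is already strictly positive, because $m-2-2j\ge 1$ forces $\Prob[B''=1]>0$ and $\Prob[D=0]=\sum_k\Prob[\bin(j,p)=k]^2>\sum_k\Prob[\bin(j,p)=k]\Prob[\bin(j,p)=k+2]=\Prob[D=2]$ by Cauchy--Schwarz; this makes every $(\Delta^2 f_m)(j)>0$ and finishes the case $p<1$. At $p=1$ the variable $V$ is deterministic, so the second differences are only $\ge 0$, but the largest admissible index $j$ places $V$ at $-1$ or $-2$, producing one strictly positive coefficient, which together with the strictly positive brackets still yields $g_m''(x)>0$ on $(0,1/2)$.

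\textbf{Main obstacle.} The delicate step is the third paragraph: establishing the sign of the discrete second difference $(\Delta^2 f_m)(j)$. The structural fact that makes it tractable is that a difference of two i.i.d.\ binomials is symmetric and unimodal, which lets nonnegativity be read off after conditioning on the independent nonnegative part $B''$; extracting a genuinely strict inequality (via Cauchy--Schwarz) and separately handling the degenerate boundary $p=1$ are the only points that require care.
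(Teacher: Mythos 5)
Your proof is correct, and its first half coincides exactly with the paper's: the Bernstein second-derivative identity is \eqref{g''(x)}, and the antisymmetry $(\Delta^{2}f_{m})(j)=-(\Delta^{2}f_{m})(m-2-j)$ coming from \eqref{symm_cond}, together with the pairing that collapses the sum to indices $j<(m-2)/2$ multiplied by brackets that are strictly positive on $(0,1/2)$, is precisely how the paper reduces Theorem~\ref{thm:convex_concave} to the inequality \eqref{convex_concave_objective}. Even your formula $(\Delta^{2}f_{m})(j)=p^{2}\bigl[\tfrac{1}{2}s(-2)+s(-1)-s(1)-\tfrac{1}{2}s(2)\bigr]$, with $s(v)=\Prob[V=v]$ and $V$ the $\bin(j,p)-\bin(m-2-j,p)$ difference, is the paper's identity \eqref{intermediate_3}; your derivation --- averaging the kernel $\psi(\cdot+2)-2\psi(\cdot)+\psi(\cdot-2)$ over the four values of the two ``middle'' Bernoulli variables and noting that the mixed outcomes $(1,0)$ and $(0,1)$ cancel --- is a transposed and distinctly more compact version of the paper's seven-scenario bookkeeping. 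Where you genuinely diverge is in proving $s(-v)\geqslant s(v)$ for $v=1,2$: the paper does this by brute force, expanding both probabilities as sums of products of binomial coefficients and comparing term by term (see \eqref{intermediate_4} and \eqref{intermediate_5}, which exploit $m-2-2\ell\geqslant 1$), whereas you decompose $V=D-B''$ with $D$ a symmetric difference of two independent $\bin(j,p)$ variables and $B''\sim\bin(m-2-2j,p)$ independent of $D$, then conclude by conditioning on $B''$ and using that $\Prob[D=\cdot]$ is nonincreasing in absolute value, with Cauchy--Schwarz supplying the strict inequality. Your route is cleaner and more conceptual --- it makes visible \emph{why} the sign is as claimed (the ``$R$-side'' has at least one extra trial) and would generalize verbatim to non-Bernoulli log-concave outcome distributions --- but it leans on the classical, nontrivial fact that log-concavity of integer-valued laws is preserved under convolution (P\'olya frequency sequences), which you should cite explicitly; the paper's computation, while messier, is entirely self-contained. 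Your treatment of $p=1$ (deterministic $V$, landing at $-1$ or $-2$ only for the top index $j$) is equivalent to, and tidier than, the paper's separate evaluation of the $f_{m}$ values at $p=1$. One shared caveat: for $m=2$ both arguments yield an empty sum, and indeed $g_{2}$ is linear when $p_{B}=p_{R}$, so the strict convexity/concavity assertion is vacuous there; the paper's statement of Theorem~\ref{thm:convex_concave} glosses over this, and your explicit flag (deferring $m=2$ to Theorem~\ref{thm:main_m=2}) is a point in your favour.
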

\begin{remark}\label{rem:p=0}
Note that, at $p=0$, $f_{m}(k)=1/2$ for each $k \in \{0,1,\ldots,m\}$, as is evident from \eqref{intermediate_1} (since each of $A_{k}$ and $B_{m-k}$, in \eqref{intermediate_1}, is degenerate at $0$ when $p=0$). Thus \eqref{g_{m}^{abs}} yields $g_{m}(x)=1/2$, a constant function. Therefore, the conclusion of Theorem~\ref{thm:convex_concave} trivially holds for $p=0$. Moreover, this shows that $g_{m}$ has only one fixed point in the interval $[0,1]$, namely $1/2$, when $p=0$.
\end{remark}
\begin{proof}
The crux of the proof lies in determining the sign of $g''(x)$ for $x \in [0,1/2]$ and $x \in [1/2,1]$, separately. To this end, we note, from \eqref{g_{m}^{abs}}, that 
\begin{align}
g'_{m}(x) ={}& \frac{d}{dx}\sum_{k=0}^{m}f_{m}(k){m \choose k}x^{k}(1-x)^{m-k}=m\sum_{\ell=0}^{m-1}\left\{f_{m}(\ell+1)-f_{m}(\ell)\right\}{m-1 \choose \ell}x^{\ell}(1-x)^{m-1-\ell}.\label{g'(x)}
\end{align}
Iterating the same argument that leads to \eqref{g'(x)}, we obtain
\begin{align}
g''_{m}(x) ={}& m(m-1)\sum_{\ell=0}^{m-2}\left\{f_{m}(\ell+2)-2f_{m}(\ell+1)+f_{m}(\ell)\right\}{m-2 \choose \ell}x^{\ell}(1-x)^{m-2-\ell}.\label{g''(x)}
\end{align}
So far, we have not used any property specific to the function $f_{m}$, i.e.\ any property pertaining to the absolute majority policy. In what follows, we make use of \eqref{symm_cond}. The notation $\lceil x \rceil$, for any $x \in \mathbb{R}$, indicates the smallest integer that is greater than or equal to the real number $x$, and the notation $\lfloor x \rfloor$ indicates the largest integer that is bounded above by $x$. When $m$ is even, we note that \eqref{symm_cond} (along with the identity $f_{m}(m/2)=1/2$ that was stated right after \eqref{symm_cond}) yields 
\begin{equation}
f_{m}\left(\frac{m}{2}+1\right)-2f_{m}\left(\frac{m}{2}\right)+f_{m}\left(\frac{m}{2}-1\right) = 1 - 2 \cdot \frac{1}{2} = 0,\nonumber
\end{equation}
so that the summand corresponding to $\ell=\left\lceil\frac{m}{2}\right\rceil-1$ in \eqref{g''(x)} equals $0$ when $m$ is even. This observation, along with the application of \eqref{symm_cond}, allows us to conclude the following:
\begin{align}
g''(x) ={}& m(m-1)\sum_{\ell=0}^{\left\lceil\frac{m}{2}\right\rceil-2}\left\{f_{m}(\ell+2)-2f_{m}(\ell+1)+f_{m}(\ell)\right\}{m-2 \choose \ell}x^{\ell}(1-x)^{m-2-\ell} \nonumber\\&+ \sum_{\ell=\left\lceil\frac{m}{2}\right\rceil-1}^{m-2}\left\{f_{m}(\ell+2)-2f_{m}(\ell+1)+f_{m}(\ell)\right\}{m-2 \choose \ell}x^{\ell}(1-x)^{m-2-\ell}\nonumber\\
={}& m(m-1)\sum_{\ell=0}^{\left\lceil\frac{m}{2}\right\rceil-2}\left\{f_{m}(\ell+2)-2f_{m}(\ell+1)+f_{m}(\ell)\right\}{m-2 \choose \ell}x^{\ell}(1-x)^{\ell}\left\{(1-x)^{m-2-2\ell}-x^{m-2-2\ell}\right\}.\nonumber
\end{align}
The difference $(1-x)^{m-2-2\ell}-x^{m-2-2\ell}$ is strictly positive for $x \in [0,1/2)$, and strictly negative for $x \in (1/2,1]$, for each $\ell \in \left\{0,1,\ldots,\left\lceil\frac{m}{2}\right\rceil-2\right\}$ (this range of values for $\ell$ ensures that the exponent $(m-2-2\ell)$ is strictly positive). To prove Theorem~\ref{thm:convex_concave}, it thus suffices to show that
\begin{equation}
f_{m}(\ell+2)-2f_{m}(\ell+1)+f_{m}(\ell) > 0 \text{ for each } \ell \in \left\{0,1,\ldots,\left\lceil\frac{m}{2}\right\rceil-2\right\} \text{ and each } p \in (0,1].\label{convex_concave_objective}
\end{equation}
The rest of the proof of Theorem~\ref{thm:convex_concave} is dedicated to establishing \eqref{convex_concave_objective}. We first focus on $p \in (0,1)$, and prove \eqref{convex_concave_objective} for the special case of $p=1$ later.

To begin with, we express the left side of the inequality in \eqref{convex_concave_objective} in terms of $X_{t}(v_{1}), X_{t}(v_{2}), \ldots, X_{t}(v_{m})$ (where $v_{1}, v_{2}, \ldots, v_{m}$ are the children of an arbitrary but fixed vertex $v$ of $\mathbb{T}_{m}$), via \eqref{intermediate_1}. We consider the event in which $C_{t}(v_{i})=B$ for each $i \in \{1,2,\ldots,\ell\}$ and $C_{t}(v_{i})=R$ for each $i \in \{\ell+3,\ldots,m\}$ (similar to the event that we conditioned on before beginning the derivation of \eqref{intermediate_1}). The idea we employ is as follows:
\begin{enumerate}
\item when computing $f_{m}(\ell)$, we assume that $C_{t}(v_{\ell+1})=C_{t}(v_{\ell+2})=R$; when computing $f_{m}(\ell+1)$, we assume that $C_{t}(v_{\ell+1})=B$ and $C_{t}(v_{\ell+2})=R$; when computing $f_{m}(\ell+2)$, we assume that $C_{t}(v_{\ell+1})=C_{t}(v_{\ell+2})=B$;
\item we then consider various possible scenarios (in terms of the values of $\sum_{i=1}^{\ell}X_{t}(v_{i})$ and $\sum_{i=\ell+3}^{m}X_{t}(v_{i})$, relative to each other), and determine the extent to which each such scenario contributes to each of the conditional probabilities $f_{m}(\ell)$, $f_{m}(\ell+1)$ and $f_{m}(\ell+2)$;
\item finally, we compute the contribution of each such scenario to $f_{m}(\ell+2)-2f_{m}(\ell+1)+f_{m}(\ell)$, and add everything to obtain our final expression.
\end{enumerate}

\textbf{Scenario 1:} When $\sum_{i=1}^{\ell}X_{t}(v_{i}) > \sum_{i=\ell+3}^{m}X_{t}(v_{i})+2$, we have $\sum_{i=\ell+1}^{m}X_{t}(v_{i}) < \sum_{i=1}^{\ell}X_{t}(v_{i})$ no matter what the values of $X_{t}(v_{\ell+1})$ and $X_{t}(v_{\ell+2})$ are,
which further implies $\sum_{i=\ell+2}^{m}X_{t}(v_{i}) < \sum_{i=1}^{\ell+1}X_{t}(v_{i})$ and $\sum_{i=\ell+3}^{m}X_{t}(v_{i}) < \sum_{i=1}^{\ell+2}X_{t}(v_{i})$. Therefore, the contribution of this scenario to each of $f_{m}(\ell)$, $f_{m}(\ell+1)$ and $f_{m}(\ell+2)$ is $\Prob\left[\sum_{i=1}^{\ell}X_{t}(v_{i}) > \sum_{i=\ell+3}^{m}X_{t}(v_{i})+2\right]$. Consequently, the contribution of this scenario to $f_{m}(\ell+2)-2f_{m}(\ell+1)+f_{m}(\ell)$ equals $0$.

\textbf{Scenario 2:} When $\sum_{i=1}^{\ell}X_{t}(v_{i}) = \sum_{i=\ell+3}^{m}X_{t}(v_{i})+2$, the strict inequality $\sum_{i=\ell+1}^{m}X_{t}(v_{i}) < \sum_{i=1}^{\ell}X_{t}(v_{i})$ holds if and only if $\left(X_{t}(v_{\ell+1}),X_{t}(v_{\ell+2})\right) \in \{(0,0), (0,1), (1,0)\}$, and it becomes an equality if and only if $\left(X_{t}(v_{\ell+1}),X_{t}(v_{\ell+2})\right) = (1,1)$. Thus, the contribution of this scenario to $f_{m}(\ell)$, from \eqref{intermediate_1}, is
\begin{align}
{}&\Prob\left[\sum_{i=1}^{\ell}X_{t}(v_{i}) = \sum_{i=\ell+3}^{m}X_{t}(v_{i})+2,\left(X_{t}(v_{\ell+1}),X_{t}(v_{\ell+2})\right) \in \{(0,0), (0,1), (1,0)\}\right] \nonumber\\&+ \frac{1}{2}\Prob\left[\sum_{i=1}^{\ell}X_{t}(v_{i}) = \sum_{i=\ell+3}^{m}X_{t}(v_{i})+2,\left(X_{t}(v_{\ell+1}),X_{t}(v_{\ell+2})\right) = (1,1)\right].\nonumber
\end{align}
On the other hand, we have $\sum_{i=\ell+2}^{m}X_{t}(v_{i}) < \sum_{i=1}^{\ell+1}X_{t}(v_{i})$ no matter what the values of $X_{t}(v_{\ell+1})$ and $X_{t}(v_{\ell+2})$ are, further implying that $\sum_{i=\ell+3}^{m}X_{t}(v_{i}) < \sum_{i=1}^{\ell+2}X_{t}(v_{i})$. Thus, the contribution of this scenario to each of $f_{m}(\ell+1)$ and $f_{m}(\ell+2)$ is $\Prob\left[\sum_{i=1}^{\ell}X_{t}(v_{i}) = \sum_{i=\ell+3}^{m}X_{t}(v_{i})+2\right]$. The net contribution of this scenario to $f_{m}(\ell+2)-2f_{m}(\ell+1)+f_{m}(\ell)$ equals 
\begin{align}
{}& \Prob\left[\sum_{i=1}^{\ell}X_{t}(v_{i}) = \sum_{i=\ell+3}^{m}X_{t}(v_{i})+2\right] - 2\Prob\left[\sum_{i=1}^{\ell}X_{t}(v_{i}) = \sum_{i=\ell+3}^{m}X_{t}(v_{i})+2\right] \nonumber\\&+ \Prob\left[\sum_{i=1}^{\ell}X_{t}(v_{i}) = \sum_{i=\ell+3}^{m}X_{t}(v_{i})+2,\left(X_{t}(v_{\ell+1}),X_{t}(v_{\ell+2})\right) \in \{(0,0), (0,1), (1,0)\}\right] \nonumber\\&+ \frac{1}{2}\Prob\left[\sum_{i=1}^{\ell}X_{t}(v_{i}) = \sum_{i=\ell+3}^{m}X_{t}(v_{i})+2,\left(X_{t}(v_{\ell+1}),X_{t}(v_{\ell+2})\right) =(1,1)\right]\nonumber\\
={}& -\frac{1}{2}\Prob\left[\sum_{i=1}^{\ell}X_{t}(v_{i}) = \sum_{i=\ell+3}^{m}X_{t}(v_{i})+2,\left(X_{t}(v_{\ell+1}),X_{t}(v_{\ell+2})\right) = (1,1)\right] = -\frac{p^{2}}{2}\Prob\left[\sum_{i=1}^{\ell}X_{t}(v_{i}) = \sum_{i=\ell+3}^{m}X_{t}(v_{i})+2\right].\nonumber
\end{align}

\textbf{Scenario 3:} When $\sum_{i=1}^{\ell}X_{t}(v_{i}) = \sum_{i=\ell+3}^{m}X_{t}(v_{i})+1$, the strict inequality $\sum_{i=\ell+1}^{m}X_{t}(v_{i}) < \sum_{i=1}^{\ell}X_{t}(v_{i})$ holds if and only if $\left(X_{t}(v_{\ell+1}),X_{t}(v_{\ell+2})\right)=(0,0)$, and it becomes an equality if $\left(X_{t}(v_{\ell+1}),X_{t}(v_{\ell+2})\right) \in \{(0,1), (1,0)\}$. Thus, the contribution of this scenario to $f_{m}(\ell)$ is
\begin{align}
{}& \Prob\left[\sum_{i=1}^{\ell}X_{t}(v_{i}) = \sum_{i=\ell+3}^{m}X_{t}(v_{i})+1,\left(X_{t}(v_{\ell+1}),X_{t}(v_{\ell+2})\right)=(0,0)\right] \nonumber\\&+ \frac{1}{2}\Prob\left[\sum_{i=1}^{\ell}X_{t}(v_{i}) = \sum_{i=\ell+3}^{m}X_{t}(v_{i})+1, \left(X_{t}(v_{\ell+1}),X_{t}(v_{\ell+2})\right) \in \{(0,1),(1,0)\}\right].\nonumber
\end{align}
Next, the strict inequality $\sum_{i=\ell+2}^{m}X_{t}(v_{i}) < \sum_{i=1}^{\ell+1}X_{t}(v_{i})$ holds if and only if $\left(X_{t}(v_{\ell+1}),X_{t}(v_{\ell+2})\right) \in \{(0,0),(1,0),(1,1)\}$, and it becomes an equality if and only if $\left(X_{t}(v_{\ell+1}),X_{t}(v_{\ell+2})\right) = (0,1)$. Thus, the contribution of this scenario to $f_{m}(\ell+1)$ is 
\begin{align}
{}&\Prob\left[\sum_{i=1}^{\ell}X_{t}(v_{i}) = \sum_{i=\ell+3}^{m}X_{t}(v_{i})+1,\left(X_{t}(v_{\ell+1}),X_{t}(v_{\ell+2})\right) \in \{(0,0),(1,0),(1,1)\}\right]\nonumber\\&+ \frac{1}{2}\Prob\left[\sum_{i=1}^{\ell}X_{t}(v_{i}) = \sum_{i=\ell+3}^{m}X_{t}(v_{i})+1, \left(X_{t}(v_{\ell+1}),X_{t}(v_{\ell+2})\right)=(0,1)\right].\nonumber
\end{align}
Finally, since $\sum_{i=\ell+3}^{m}X_{t}(v_{i}) < \sum_{i=1}^{\ell+2}X_{t}(v_{i})$ no matter what values $X_{t}(v_{\ell+1})$ and $X_{t}(v_{\ell+2})$ assume, the contribution of this scenario to $f_{m}(\ell+2)$ is thus $\Prob\left[\sum_{i=1}^{\ell}X_{t}(v_{i}) = \sum_{i=\ell+3}^{m}X_{t}(v_{i})+1\right]$. The net contribution of this scenario to $f_{m}(\ell+2)-2f_{m}(\ell+1)+f_{m}(\ell)$ equals
\begin{align} 
{}&\Prob\left[\sum_{i=1}^{\ell}X_{t}(v_{i}) = \sum_{i=\ell+3}^{m}X_{t}(v_{i})+1,\left(X_{t}(v_{\ell+1}),X_{t}(v_{\ell+2})\right)=(0,0)\right] \nonumber\\&+ \frac{1}{2}\Prob\left[\sum_{i=1}^{\ell}X_{t}(v_{i}) = \sum_{i=\ell+3}^{m}X_{t}(v_{i})+1, \left(X_{t}(v_{\ell+1}),X_{t}(v_{\ell+2})\right) \in \{(0,1),(1,0)\}\right]\nonumber\\&-2\Prob\left[\sum_{i=1}^{\ell}X_{t}(v_{i}) = \sum_{i=\ell+3}^{m}X_{t}(v_{i})+1,\left(X_{t}(v_{\ell+1}),X_{t}(v_{\ell+2})\right) \in \{(0,0),(1,0),(1,1)\}\right]\nonumber\\& - \Prob\left[\sum_{i=1}^{\ell}X_{t}(v_{i}) = \sum_{i=\ell+3}^{m}X_{t}(v_{i})+1,\left(X_{t}(v_{\ell+1}),X_{t}(v_{\ell+2})\right)=(0,1)\right]+\Prob\left[\sum_{i=1}^{\ell}X_{t}(v_{i}) = \sum_{i=\ell+3}^{m}X_{t}(v_{i})+1\right]\nonumber\\
={}& -\frac{1}{2}\Prob\left[\sum_{i=1}^{\ell}X_{t}(v_{i}) = \sum_{i=\ell+3}^{m}X_{t}(v_{i})+1,\left(X_{t}(v_{\ell+1}),X_{t}(v_{\ell+2})\right)=(1,0)\right]\nonumber\\&+\frac{1}{2}\Prob\left[\sum_{i=1}^{\ell}X_{t}(v_{i}) = \sum_{i=\ell+3}^{m}X_{t}(v_{i})+1,\left(X_{t}(v_{\ell+1}),X_{t}(v_{\ell+2})\right)=(0,1)\right]\nonumber\\&-\Prob\left[\sum_{i=1}^{\ell}X_{t}(v_{i}) = \sum_{i=\ell+3}^{m}X_{t}(v_{i})+1,\left(X_{t}(v_{\ell+1}),X_{t}(v_{\ell+2})\right)=(1,1)\right] = -p^{2}\Prob\left[\sum_{i=1}^{\ell}X_{t}(v_{i}) = \sum_{i=\ell+3}^{m}X_{t}(v_{i})+1\right].\nonumber
\end{align}

\textbf{Scenario 4:} When $\sum_{i=1}^{\ell}X_{t}(v_{i})=\sum_{i=\ell+3}^{m}X_{t}(v_{i})$, the strict inequality $\sum_{i=\ell+1}^{m}X_{t}(v_{i}) < \sum_{i=1}^{\ell}X_{t}(v_{i})$ never holds, but equality does if and only if $\left(X_{t}(v_{\ell+1}),X_{t}(v_{\ell+2})\right)=(0,0)$. The contribution of this scenario to $f_{m}(\ell)$ is thus
\begin{equation}
\frac{1}{2}\Prob\left[\sum_{i=1}^{\ell}X_{t}(v_{i})=\sum_{i=\ell+3}^{m}X_{t}(v_{i}),\left(X_{t}(v_{\ell+1}),X_{t}(v_{\ell+2})\right)=(0,0)\right].\nonumber
\end{equation}
Next, the strict inequality $\sum_{i=\ell+2}^{m}X_{t}(v_{i})<\sum_{i=1}^{\ell+1}X_{t}(v_{i})$ holds if and only if $\left(X_{t}(v_{\ell+1},X_{t}(v_{\ell+2})\right)=(1,0)$, and it becomes an equality if and only if $\left(X_{t}(v_{\ell+1},X_{t}(v_{\ell+2})\right) \in \{(0,0),(1,1)\}$. The contribution of this scenario to $f_{m}(\ell+1)$ is thus
\begin{align}
{}&\Prob\left[\sum_{i=1}^{\ell}X_{t}(v_{i})=\sum_{i=\ell+3}^{m}X_{t}(v_{i}),\left(X_{t}(v_{\ell+1}),X_{t}(v_{\ell+2})\right)=(1,0)\right]\nonumber\\&+\frac{1}{2}\Prob\left[\sum_{i=1}^{\ell}X_{t}(v_{i})=\sum_{i=\ell+3}^{m}X_{t}(v_{i}),\left(X_{t}(v_{\ell+1}),X_{t}(v_{\ell+2})\right)\in \{(0,0),(1,1)\}\right].\nonumber
\end{align} 
Finally, the strict inequality $\sum_{i=\ell+3}^{m}X_{t}(v_{i})<\sum_{i=1}^{\ell+2}X_{t}(v_{i})$ holds if and only if $\left(X_{t}(v_{\ell+1},X_{t}(v_{\ell+2})\right) \in \{(0,1),(1,0),(1,1)\}$, and it becomes an equality if and only if $\left(X_{t}(v_{\ell+1},X_{t}(v_{\ell+2})\right) = (0,0)$. Therefore, the contribution of this scenario to $f_{m}(\ell+2)$ is 
\begin{align}
{}&\Prob\left[\sum_{i=1}^{\ell}X_{t}(v_{i})=\sum_{i=\ell+3}^{m}X_{t}(v_{i}),\left(X_{t}(v_{\ell+1}),X_{t}(v_{\ell+2})\right) \in \{(0,1),(1,0),(1,1)\}\right]\nonumber\\&+\frac{1}{2}\Prob\left[\sum_{i=1}^{\ell}X_{t}(v_{i})=\sum_{i=\ell+3}^{m}X_{t}(v_{i}),\left(X_{t}(v_{\ell+1}),X_{t}(v_{\ell+2})\right)=(0,0)\right].\nonumber
\end{align}
The net contribution of this scenario to $f_{m}(\ell+2)-2f_{m}(\ell+1)+f_{m}(\ell)$ is thus 
\begin{align}
{}&\frac{1}{2}\Prob\left[\sum_{i=1}^{\ell}X_{t}(v_{i})=\sum_{i=\ell+3}^{m}X_{t}(v_{i}),\left(X_{t}(v_{\ell+1}),X_{t}(v_{\ell+2})\right)=(0,0)\right]\nonumber\\&-2\Prob\left[\sum_{i=1}^{\ell}X_{t}(v_{i})=\sum_{i=\ell+3}^{m}X_{t}(v_{i}),\left(X_{t}(v_{\ell+1}),X_{t}(v_{\ell+2})\right)=(1,0)\right]\nonumber\\&-\Prob\left[\sum_{i=1}^{\ell}X_{t}(v_{i})=\sum_{i=\ell+3}^{m}X_{t}(v_{i}),\left(X_{t}(v_{\ell+1}),X_{t}(v_{\ell+2})\right)\in \{(0,0),(1,1)\}\right]\nonumber\\&+\Prob\left[\sum_{i=1}^{\ell}X_{t}(v_{i})=\sum_{i=\ell+3}^{m}X_{t}(v_{i}),\left(X_{t}(v_{\ell+1}),X_{t}(v_{\ell+2})\right) \in \{(0,1),(1,0),(1,1)\}\right]\nonumber\\&+\frac{1}{2}\Prob\left[\sum_{i=1}^{\ell}X_{t}(v_{i})=\sum_{i=\ell+3}^{m}X_{t}(v_{i}),\left(X_{t}(v_{\ell+1}),X_{t}(v_{\ell+2})\right)=(0,0)\right]\nonumber\\
={}& -\Prob\left[\sum_{i=1}^{\ell}X_{t}(v_{i})=\sum_{i=\ell+3}^{m}X_{t}(v_{i}),\left(X_{t}(v_{\ell+1}),X_{t}(v_{\ell+2})\right)=(1,0)\right] \nonumber\\&+ \Prob\left[\sum_{i=1}^{\ell}X_{t}(v_{i})=\sum_{i=\ell+3}^{m}X_{t}(v_{i}),\left(X_{t}(v_{\ell+1}),X_{t}(v_{\ell+2})\right)=(0,1)\right] = 0.\nonumber
\end{align}

\textbf{Scenario 5:} When $\sum_{i=1}^{\ell}X_{t}(v_{i})=\sum_{i=\ell+3}^{m}X_{t}(v_{i})-1$, we have $\sum_{i=1}^{\ell}X_{t}(v_{i}) < \sum_{i=\ell+1}^{m}X_{t}(v_{i})$ no matter what the values of $X_{t}(v_{\ell+1})$ and $X_{t}(v_{\ell+2})$ are, so that this scenario contributes nothing to $f_{m}(\ell)$. Next, we have $\sum_{i=\ell+2}^{m}X_{t}(v_{i}) = \sum_{i=1}^{\ell+1}X_{t}(v_{i})$ if and only if $\left(X_{t}(v_{\ell+1}),X_{t}(v_{\ell+2})\right)=(1,0)$, and under no circumstances do we have the strict inequality $\sum_{i=\ell+2}^{m}X_{t}(v_{i}) < \sum_{i=1}^{\ell+1}X_{t}(v_{i})$. Thus, the contribution of this scenario to $f_{m}(\ell+1)$ is 
\begin{equation}
\frac{1}{2}\Prob\left[\sum_{i=1}^{\ell}X_{t}(v_{i})=\sum_{i=\ell+3}^{m}X_{t}(v_{i})-1,\left(X_{t}(v_{\ell+1}),X_{t}(v_{\ell+2})\right)=(1,0)\right].\nonumber
\end{equation}
Finally, the strict inequality $\sum_{i=\ell+3}^{m}X_{t}(v_{i}) < \sum_{i=1}^{\ell+2}X_{t}(v_{i})$ holds if and only if $\left(X_{t}(v_{\ell+1}),X_{t}(v_{\ell+2})\right)=(1,1)$, and this becomes an equality if and only if $\left(X_{t}(v_{\ell+1}),X_{t}(v_{\ell+2})\right) \in \{(0,1),(1,0)\}$. Thus, the contribution of this scenario to $f_{m}(\ell+2)$ equals
\begin{align}
{}&\Prob\left[\sum_{i=1}^{\ell}X_{t}(v_{i})=\sum_{i=\ell+3}^{m}X_{t}(v_{i})-1,\left(X_{t}(v_{\ell+1}),X_{t}(v_{\ell+2})\right)=(1,1)\right]\nonumber\\&+\frac{1}{2}\Prob\left[\sum_{i=1}^{\ell}X_{t}(v_{i})=\sum_{i=\ell+3}^{m}X_{t}(v_{i})-1,\left(X_{t}(v_{\ell+1}),X_{t}(v_{\ell+2})\right)\in \{(0,1),(1,0)\}\right].\nonumber
\end{align}
The net contribution of this scenario to $f_{m}(\ell+2)-2f_{m}(\ell+1)+f_{m}(\ell)$ is thus
\begin{align}
{}&\Prob\left[\sum_{i=1}^{\ell}X_{t}(v_{i})=\sum_{i=\ell+3}^{m}X_{t}(v_{i})-1,\left(X_{t}(v_{\ell+1}),X_{t}(v_{\ell+2})\right)=(1,1)\right]\nonumber\\&+\frac{1}{2}\Prob\left[\sum_{i=1}^{\ell}X_{t}(v_{i})=\sum_{i=\ell+3}^{m}X_{t}(v_{i})-1,\left(X_{t}(v_{\ell+1}),X_{t}(v_{\ell+2})\right)\in \{(0,1),(1,0)\}\right]\nonumber\\&-\Prob\left[\sum_{i=1}^{\ell}X_{t}(v_{i})=\sum_{i=\ell+3}^{m}X_{t}(v_{i})-1,\left(X_{t}(v_{\ell+1}),X_{t}(v_{\ell+2})\right)=(1,0)\right]=p^{2}\Prob\left[\sum_{i=1}^{\ell}X_{t}(v_{i})=\sum_{i=\ell+3}^{m}X_{t}(v_{i})-1\right].\nonumber
\end{align}

\textbf{Scenario 6:} When $\sum_{i=1}^{\ell}X_{t}(v_{i})=\sum_{i=\ell+3}^{m}X_{t}(v_{i})-2$, we have $\sum_{i=\ell+2}^{m}X_{t}(v_{i})=\sum_{i=\ell+3}^{m}X_{t}(v_{i})+X_{t}(v_{\ell+2})=\sum_{i=1}^{\ell}X_{t}(v_{i})+2+X_{t}(v_{\ell+2}) > \sum_{i=1}^{\ell}X_{t}(v_{i})+X_{t}(v_{\ell+1}) = \sum_{i=1}^{\ell+1}X_{t}(v_{i})$ no matter what the values of $X_{t}(v_{\ell+1})$ and $X_{t}(v_{\ell+2})$ are. This inequality further implies that $\sum_{i=\ell+1}^{m}X_{t}(v_{i}) > \sum_{i=1}^{\ell}X_{t}(v_{i})$. Thus, this scenario leaves no contribution in $f_{m}(\ell+1)$ nor in $f_{m}(\ell)$. We have $\sum_{i=\ell+3}^{m}X_{t}(v_{i})=\sum_{i=1}^{\ell+2}X_{t}(v_{i})$ if and only if $\left(X_{t}(v_{\ell+1}),X_{t}(v_{\ell+2})\right)=(1,1)$. Thus, the contribution of this scenario to $f_{m}(\ell+2)$, and in fact, to $f_{m}(\ell+2)-2f_{m}(\ell+1)+f_{m}(\ell)$, is 
\begin{equation}
\frac{1}{2}\Prob\left[\sum_{i=1}^{\ell}X_{t}(v_{i})=\sum_{i=\ell+3}^{m}X_{t}(v_{i})-2,\left(X_{t}(v_{\ell+1}),X_{t}(v_{\ell+2})\right)=(1,1)\right] = \frac{p^{2}}{2}\Prob\left[\sum_{i=1}^{\ell}X_{t}(v_{i})=\sum_{i=\ell+3}^{m}X_{t}(v_{i})-2\right].\nonumber
\end{equation}

\textbf{Scenario 7:} When $\sum_{i=1}^{\ell}X_{t}(v_{i})<\sum_{i=\ell+3}^{m}X_{t}(v_{i})-2$, it is straightforward to see that it contributes to none of $f_{m}(\ell)$, $f_{m}(\ell+1)$ and $f_{m}(\ell+2)$, so that its net contribution to $f_{m}(\ell+2)-2f_{m}(\ell+1)+f_{m}(\ell)$ is $0$. 

Combining the contributions obtained from all the scenarios considered above, we conclude that 
\begin{align}
{}&f_{m}(\ell+2)-2f_{m}(\ell+1)+f_{m}(\ell) \nonumber\\
={}& p^{2}\left\{\Prob\left[\sum_{i=1}^{\ell}X_{t}(v_{i})=\sum_{i=\ell+3}^{m}X_{t}(v_{i})-1\right]-\Prob\left[\sum_{i=1}^{\ell}X_{t}(v_{i}) = \sum_{i=\ell+3}^{m}X_{t}(v_{i})+1\right]\right\}\nonumber\\&+ \frac{p^{2}}{2}\left\{\Prob\left[\sum_{i=1}^{\ell}X_{t}(v_{i})=\sum_{i=\ell+3}^{m}X_{t}(v_{i})-2\right]-\Prob\left[\sum_{i=1}^{\ell}X_{t}(v_{i})=\sum_{i=\ell+3}^{m}X_{t}(v_{i})+2\right]\right\}.\label{intermediate_3}
\end{align}
Since $\sum_{i=1}^{\ell}X_{t}(v_{i})$ follows Binomial$(\ell,p)$, $\sum_{i=\ell+3}^{m}X_{t}(v_{i})$ follows Binomial$(m-\ell-2,p)$, and they are independent of each other, we have (the strict inequality holds because $p \in (0,1)$, making the omitted term strictly positive):
\begin{align}
{}& \Prob\left[\sum_{i=1}^{\ell}X_{t}(v_{i})=\sum_{i=\ell+3}^{m}X_{t}(v_{i})-1\right]-\Prob\left[\sum_{i=1}^{\ell}X_{t}(v_{i})=\sum_{i=\ell+3}^{m}X_{t}(v_{i})+1\right]\nonumber\\
={}& \sum_{r=0}^{\ell}\Prob\left[\sum_{i=1}^{\ell}X_{t}(v_{i})=r,\sum_{i=\ell+3}^{m}X_{t}(v_{i})=r+1\right]-\sum_{r=0}^{\ell-1}\Prob\left[\sum_{i=1}^{\ell}X_{t}(v_{i})=r+1,\sum_{i=\ell+3}^{m}X_{t}(v_{i})=r\right]\nonumber\\
> {}& \sum_{r=0}^{\ell-1}{\ell \choose r}p^{r}(1-p)^{\ell-r}{m-2-\ell \choose r+1}p^{r+1}(1-p)^{m-3-\ell-r}\nonumber\\&-\sum_{r=0}^{\ell-1}{\ell \choose r+1}p^{r+1}(1-p)^{\ell-r-1}{m-2-\ell \choose r}p^{r}(1-p)^{m-2-\ell-r}\nonumber\\
={}& \sum_{r=0}^{\ell-1}\frac{\ell!(m-2-\ell)!}{r!(r+1)!(\ell-r)!(m-2-r-\ell)!}\left\{(m-2-r-\ell)-(\ell-r)\right\}p^{2r+1}(1-p)^{m-3-2r}\nonumber\\
={}& \sum_{r=0}^{\ell-1}\frac{\ell!(m-2-\ell)!(m-2-2\ell)}{r!(r+1)!(\ell-r)!(m-2-r-\ell)!}p^{2r+1}(1-p)^{m-3-2r},\label{intermediate_4}
\end{align}
and this is strictly positive for all $p \in (0,1)$ since $\ell \leqslant \left\lceil\frac{m}{2}\right\rceil-2 \implies \implies m-2-2\ell \geqslant 1$. This takes care of the first difference in the expression in \eqref{intermediate_3}.

We deal with the second difference in \eqref{intermediate_3} in much the same manner as the first, but with a couple of additional observations, as follows: 
\begin{enumerate}
\item When $m$ is odd and $\ell = \left\lceil\frac{m}{2}\right\rceil-2$, we have $\ell+2 > m-\ell-2 = \ell+1$, which means that the event $\left\{\sum_{i=1}^{\ell}X_{t}(v_{i})=\sum_{i=\ell+3}^{m}X_{t}(v_{i})-2\right\}$ equals the union $\bigcup_{r=0}^{\ell-1}\left\{\sum_{i=1}^{\ell}X_{t}(v_{i})=r,\sum_{i=\ell+3}^{m}X_{t}(v_{i})=r+2\right\}$, whereas when $\ell < \left\lceil\frac{m}{2}\right\rceil-2$, it equals the union $\bigcup_{r=0}^{\ell}\left\{\sum_{i=1}^{\ell}X_{t}(v_{i})=r,\sum_{i=\ell+3}^{m}X_{t}(v_{i})=r+2\right\}$. 
\item When $m$ is even, the event $\left\{\sum_{i=1}^{\ell}X_{t}(v_{i})=\sum_{i=\ell+3}^{m}X_{t}(v_{i})-2\right\}$ equals the union $\bigcup_{r=0}^{\ell}\left\{\sum_{i=1}^{\ell}X_{t}(v_{i})=r,\sum_{i=\ell+3}^{m}X_{t}(v_{i})=r+2\right\}$ for \emph{each} $\ell \in \left\{0,1,\ldots,\left\lceil\frac{m}{2}\right\rceil-2\right\}$. 
\end{enumerate}
We can thus write $\Prob\left[\sum_{i=1}^{\ell}X_{t}(v_{i})=\sum_{i=\ell+3}^{m}X_{t}(v_{i})-2\right] \geqslant \sum_{r=0}^{\ell-1}\Prob\left[\sum_{i=1}^{\ell}X_{t}(v_{i})=r,\sum_{i=\ell+3}^{m}X_{t}(v_{i})=r+2\right]$. Therefore, the second difference in the expression of \eqref{intermediate_3} becomes (once again, the strict inequality holds because $p \in (0,1)$, making the omitted term strictly positive):
\begin{align}
{}&\Prob\left[\sum_{i=1}^{\ell}X_{t}(v_{i})=\sum_{i=\ell+3}^{m}X_{t}(v_{i})-2\right]-\Prob\left[\sum_{i=1}^{\ell}X_{t}(v_{i})=\sum_{i=\ell+3}^{m}X_{t}(v_{i})+2\right]\nonumber\\
\geqslant{}& \sum_{r=0}^{\ell-1}\Prob\left[\sum_{i=1}^{\ell}X_{t}(v_{i})=r,\sum_{i=\ell+3}^{m}X_{t}(v_{i})=r+2\right] - \sum_{r=0}^{\ell-2}\Prob\left[\sum_{i=1}^{\ell}X_{t}(v_{i})=r+2,\sum_{i=\ell+3}^{m}X_{t}(v_{i})=r\right]\nonumber\\
>{}& \sum_{r=0}^{\ell-2}{\ell \choose r}p^{r}(1-p)^{\ell-r}{m-\ell-2 \choose r+2}p^{r+2}(1-p)^{m-\ell-r-4}\nonumber\\&- \sum_{r=0}^{\ell-2}{\ell \choose r+2}p^{r+2}(1-p)^{\ell-r-2}{m-\ell-2 \choose r}p^{r}(1-p)^{m-\ell-r-2}\nonumber\\
={}& \sum_{r=0}^{\ell-2}\frac{\ell!(m-2-\ell)!\left\{(m-\ell-r-3)(m-\ell-r-2)-(\ell-r-1)(\ell-r)\right\}}{r!(r+2)!(\ell-r)!(m-\ell-r-2)!}p^{2r+2}(1-p)^{m-2r-4},\label{intermediate_5}
\end{align}
and this is strictly positive for $p \in (0,1)$ because, as noted above, $\ell \leqslant \left\lceil\frac{m}{2}\right\rceil-2 \implies m-2-2\ell \geqslant 1$, which in turn implies that $m-\ell-r-2 > \ell-r$ as well as $m-\ell-r-3 > \ell-r-1$.

Since we have established that the expression in \eqref{intermediate_4} as well as that in \eqref{intermediate_5} is strictly positive for each $\ell \in \left\{0,1,\ldots,\left\lceil\frac{m}{2}\right\rceil-2\right\}$ and for each $p \in (0,1)$, so is the expression in \eqref{intermediate_3}, which in turn accomplishes our objective of proving \eqref{convex_concave_objective} for $p \in (0,1)$.

We now come to the proof of \eqref{convex_concave_objective} for $p=1$. Note, in this case, from \eqref{intermediate_1}, that
\begin{enumerate*}
\item $f_{m}(\ell)=1$ if $\ell > m-\ell$, 
\item $f_{m}(\ell)=1/2$ if $\ell=m-\ell$ (which only happens if $m$ is even),
\item and $f_{m}(\ell)=0$ if $\ell < m-\ell$.
\end{enumerate*}
Note that $\ell \leqslant \left\lceil\frac{m}{2}\right\rceil-2 \implies \ell+1 \leqslant m-\ell-2 < m-(\ell+1)$, so that $f_{m}(\ell+1)=0$, further implying $f_{m}(\ell)=0$, for each $\ell \in \left\{0,1,\ldots,\left\lceil\frac{m}{2}\right\rceil-2\right\}$. Moreover, $\ell < \left\lceil\frac{m}{2}\right\rceil-2 \implies \ell \leqslant \left\lceil\frac{m}{2}\right\rceil-3 \implies \ell+2 \leqslant m-\ell-3 < m-(\ell+2)$, so that $f_{m}(\ell+2)=0$ for all $\ell < \left\lceil\frac{m}{2}\right\rceil-2$. When $m=2n$ for some $n \in \mathbb{N}$, and $\ell=\left\lceil\frac{m}{2}\right\rceil-2=n-2$, we have $\ell+2=n=m-\ell-2$, implying that $f_{m}(\ell+2)=1/2$ in this case. When $m=2n+1$ for some $n \in \mathbb{N}$, and $\ell=\left\lceil\frac{m}{2}\right\rceil-2=n-1$, we have $\ell+2=n+1>n=m-\ell-2$, so that $f_{m}(\ell+2)=1$ in this case. All these observations together imply that the expression $f_{m}(\ell+2)-2f_{m}(\ell+1)+f_{m}(\ell)$ equals $0$ for each $\ell < \left\lceil\frac{m}{2}\right\rceil-2$, and when $\ell=\left\lceil\frac{m}{2}\right\rceil-2$, it equals $1/2$ for $m$ even and $1$ for $m$ odd, once again establishing \eqref{convex_concave_objective}.

This completes the proof of \eqref{convex_concave_objective}, and consequently, Theorem~\ref{thm:convex_concave}, for all $p \in (0,1]$.
\end{proof}

Before we proceed any further, we explain why we already have the full picture of $g_{m}$ when $p=1$. From Theorem~\ref{thm:convex_concave}, we conclude that the curve $y=g_{m}(x)$ has at most two points of intersection with the line $y=x$ in \emph{each} of the two intervals $[0,1/2]$ and $[1/2,1]$. From \eqref{basic_conclusion_1}, we know that $1/2$ is already a fixed point, which means that $g_{m}$ is allowed to have at most one fixed point in the interval $[0,1/2)$, and at most one fixed point in the interval $(1/2,1]$. From \eqref{intermediate_1}, we note that 
\begin{enumerate*}
\item $f_{m}(k)=1$ for each $k \in \{n+1,\ldots,2n+1\}$ when $m=2n+1$,
\item $f_{m}(k)=1$ for each $k \in \{n+1,\ldots,2n\}$ and $f_{m}(n)=1/2$, when $m=2n$,
\end{enumerate*}
when $p=1$. This tells us that, when $p=1$, we have $g_{m}(0)=0$, and this, along with \eqref{basic_conclusion_2}, yields \emph{both} $0$ and $1$ as fixed points of $g_{m}$ when $p=1$. Consequently, $0$, $1/2$ and $1$ are the three fixed points of $g_{m}$ when $p=1$.

Recall from Remark~\ref{rem:p=0} that $y=g_{m}(x)$ coincides with the line $y=1/2$ when $p=0$, so that $g_{m}$ has the unique fixed point $1/2$ in $[0,1]$ when $p=0$. We, therefore, need only focus on investigating the number of fixed points of $g_{m}$ when $p \in (0,1)$. The following result is the first step towards such an investigation:
\begin{lemma}\label{lem:fixed_point_count_g'(1/2)}
For $p \in (0,1)$, the function $g_{m}$ has a unique fixed point in $[0,1]$ if and only if its derivative $g'_{m}(1/2)$ at $1/2$ is bounded above by $1$.
\end{lemma}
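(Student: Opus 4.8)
The plan is to study the auxiliary function $h(x) = g_{m}(x) - x$ on $[0,1]$, whose zeros are precisely the fixed points of $g_{m}$, and to read off their number from the convexity information already established in Theorem~\ref{thm:convex_concave}. Since subtracting the linear term $x$ does not alter second derivatives, $h$ inherits from $g_{m}$ strict convexity on $[0,1/2]$ and strict concavity on $[1/2,1]$ for every $p \in (0,1)$. Moreover, since $1/2$ is always a fixed point we have $h(1/2)=0$, and the symmetry relation $g_{m}(1-x) = 1 - g_{m}(x)$ derived just before Theorem~\ref{thm:convex_concave} gives $h(1-x) = -h(x)$. Thus the zeros of $h$ are symmetric about $1/2$, and it suffices to count the zeros of $h$ in $[0,1/2]$: each zero $\alpha \in [0,1/2)$ is mirrored by a zero $1-\alpha \in (1/2,1]$, while $1/2$ is its own mirror.

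Two preliminary facts drive the argument. First, a strictly convex function on an interval has at most two zeros there (if $h(a)=h(b)=h(c)=0$ with $a<b<c$, strict convexity forces $h(b)$ to lie strictly below the chord value, a contradiction). Hence $h$ has at most two zeros in $[0,1/2]$, one of which is the endpoint $1/2$, so at most one zero of $h$ can lie in $[0,1/2)$. Second, evaluating \eqref{g_{m}^{abs}} at $x=0$ leaves only the $k=0$ term, giving $h(0) = g_{m}(0) = f_{m}(0)$; and from \eqref{f_{m}^{abs}}, since $A_{0}$ is degenerate at $0$, we get $f_{m}(0) = \tfrac{1}{2}\Prob[B_{m}=0] = \tfrac{1}{2}(1-p)^{m} > 0$ for all $p \in (0,1)$. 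The entire dichotomy is then controlled by the sign of $h'(1/2) = g'_{m}(1/2) - 1$, because strict convexity makes $h'$ strictly increasing on $[0,1/2]$.

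I would then split into two cases. If $g'_{m}(1/2) \leqslant 1$, i.e.\ $h'(1/2) \leqslant 0$, then strict monotonicity of $h'$ forces $h'(x) < h'(1/2) \leqslant 0$ for every $x \in [0,1/2)$, so $h$ is strictly decreasing on $[0,1/2]$; combined with $h(1/2)=0$, this yields $h(x) > 0$ on $[0,1/2)$, so $1/2$ is the only zero of $h$ in $[0,1/2]$, and by symmetry the only zero in $[0,1]$. If instead $g'_{m}(1/2) > 1$, i.e.\ $h'(1/2) > 0$, then $h$ is strictly increasing through its root at $1/2$, so $h(x) < 0$ for $x$ slightly below $1/2$; together with $h(0) > 0$ and continuity, the intermediate value theorem produces a zero $\alpha \in (0,1/2)$, which is the unique such zero by the ``at most two zeros'' bound. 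Its mirror $1-\alpha \in (1/2,1)$ is then a third fixed point, so uniqueness fails.

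Combining the two cases yields both implications of the stated equivalence: $g_{m}$ has a unique fixed point precisely when $g'_{m}(1/2) \leqslant 1$. The main obstacle is not any single estimate but rather assembling three ingredients—the convexity bound on the number of zeros, the boundary value $h(0)>0$, and the monotonicity of $h'$—into a clean sign analysis. The one point that demands genuine care is the boundary case $g'_{m}(1/2) = 1$, which the \emph{strict} (rather than merely weak) convexity asserted in Theorem~\ref{thm:convex_concave} correctly places in the unique-fixed-point regime, since strict monotonicity of $h'$ still gives $h'(x)<0$ for $x<1/2$ even when $h'(1/2)=0$.
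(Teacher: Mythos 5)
Your proposal is correct, and it proves the equivalence in the contrapositive direction relative to the paper, with a slightly different mechanism. The paper argues from the fixed-point structure to the slope: since $g_{m}(0)=\tfrac{1}{2}(1-p)^{m}>0$, the curve $y=g_{m}(x)$ crosses the line $y=x$ downward at its smallest fixed point $\alpha$, so $g'_{m}(\alpha)\leqslant 1$, and uniqueness forces $\alpha=1/2$; conversely, when there are multiple fixed points, the symmetry and Theorem~\ref{thm:convex_concave} pin them down as $\alpha<1/2<1-\alpha$, the crossing at $1/2$ is then upward, and the paper concludes $g'_{m}(1/2)>1$. You instead argue from the slope to the fixed-point structure: strict convexity makes $h'=g'_{m}-1$ strictly increasing on $[0,1/2]$, so $h'(1/2)\leqslant 0$ forces $h>0$ on $[0,1/2)$ and hence uniqueness, while $h'(1/2)>0$ together with $h(0)>0$ produces, via the intermediate value theorem, a second fixed point in $(0,1/2)$ and hence non-uniqueness. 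Both routes rest on the same three pillars --- Theorem~\ref{thm:convex_concave}, the antisymmetry $g_{m}(1-x)=1-g_{m}(x)$, and the boundary value $g_{m}(0)>0$ --- so this is the same toolkit differently assembled; but your assembly buys two small things. First, the paper's step asserting that an upward crossing at $1/2$ forces a slope \emph{strictly} greater than $1$ is, as written, only immediate in the weak form $g'_{m}(1/2)\geqslant 1$ (strictness requires a further appeal to strict convexity, e.g.\ comparing $h'(1/2)$ with the chord slope over $[\alpha,1/2]$), whereas your Case 2 needs nothing beyond the intermediate value theorem. Second, your treatment of the borderline case $g'_{m}(1/2)=1$ is explicit, with strict monotonicity of $h'$ correctly placing it in the unique-fixed-point regime.
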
  
\begin{proof}
Note that, from \eqref{g_{m}^{abs}} and \eqref{intermediate_1}, we have $g_{m}(0)=f_{m}(0)=\frac{1}{2}\Prob[B_{m}=0] = \frac{1}{2}(1-p)^{m} > 0$ (since $p < 1$), so that the curve $y=g_{m}(x)$ lies \emph{above} the line $y=x$ at $x=0$. If $\alpha$ is the smallest fixed point of $g_{m}$ in the interval $[0,1]$ (and $\alpha > 0$ since we have just shown that $g_{m}(0) > 0$), the curve $y=g_{m}(x)$ travels from \emph{above} $y=x$ to \emph{beneath} $y=x$ at $x=\alpha$. Therefore, the slope $g'_{m}(\alpha)$ of the curve $y=g_{m}(x)$ at $x=\alpha$ must be bounded above by the slope of the line $y=x$, which is $1$. We already know, from \eqref{basic_conclusion_1}, that $1/2$ is a fixed point of $g_{m}$. When $g_{m}$ has a unique fixed point in $[0,1]$, it must be $\alpha=1/2$, allowing us to conclude that $g'_{m}(1/2) \leqslant 1$. 

When $g_{m}$ has multiple fixed points in $[0,1]$, we know, from \eqref{basic_conclusion_1} and \eqref{basic_conclusion_2}, that $\alpha < 1/2$, and that all three of $\alpha$, $1/2$ and $1-\alpha$ are fixed points of $g_{m}$. From Theorem~\ref{thm:convex_concave}, we know that $g_{m}$ has at most two fixed points in $[0,1/2]$ and at most two in $[1/2,1]$, so that $\alpha$, $1/2$ and $1-\alpha$ constitute \emph{all} of the fixed points of $g_{m}$ in $[0,1]$. Evidently, the curve $y=g_{m}(x)$ travels from \emph{above} $y=x$ to \emph{beneath} $y=x$ at $x=\alpha$, whereas it travels from \emph{beneath} $y=x$ to \emph{above} $y=x$ at $x=1/2$. Consequently, the slope $g'_{m}(1/2)$ of $g_{m}$ at $1/2$ must be strictly greater than the slope of the line $y=x$, which is $1$, yielding $g'_{m}(1/2) > 1$ in this case, as desired. This completes the proof.
\end{proof}

We now come to the final step, i.e.\ \eqref{broad_step_4}, of proving Theorem~\ref{thm:main_equal}. From Lemma~\ref{lem:fixed_point_count_g'(1/2)} as well as the second paragraph of its proof, it suffices for us to show that, for each $m \in \mathbb{N}$ with $m \geqslant 2$, there exists $p(m) \in (0,1)$ such that $g'_{m}(1/2) \leqslant 1$ for each $p \leqslant p(m)$, and $g'_{m}(1/2) > 1$ for each $p > p(m)$. This is precisely what is accomplished by proving the following result:
\begin{theorem}\label{thm:g'_{m}(1/2)_increasing}
For each $m \in \mathbb{N}$ with $m \geqslant 2$, the derivative $g'_{m}(1/2)$ is a strictly increasing function of the parameter $p$, for $p \in (0,1)$.
\end{theorem}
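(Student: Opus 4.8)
The plan is to reduce $g'_m(1/2)$ to an explicit polynomial in $p$ and then show that its $p$-derivative is a sum of manifestly nonnegative terms. Evaluating \eqref{g'(x)} at $x=1/2$ gives
\begin{equation}
g'_m(1/2) = \frac{m}{2^{m-1}}\sum_{\ell=0}^{m-1}\binom{m-1}{\ell}\left\{f_m(\ell+1)-f_m(\ell)\right\},\nonumber
\end{equation}
so everything hinges on understanding the \emph{first} differences $f_m(\ell+1)-f_m(\ell)$ as functions of $p$. First I would obtain these by a one-coin pivotal argument, entirely analogous to (but far shorter than) the scenario analysis used for Theorem~\ref{thm:convex_concave}: conditioning, as in \eqref{intermediate_1}, on $C_t(v_i)=B$ for $i\le\ell$ and $C_t(v_i)=R$ for $i\ge\ell+2$, and toggling the single coin $v_{\ell+1}$ between $B$ and $R$, one finds that the contribution of $X_t(v_{\ell+1})=0$ cancels and only $X_t(v_{\ell+1})=1$ survives, yielding
\begin{equation}
f_m(\ell+1)-f_m(\ell) = \frac{p}{2}\left\{\Prob[U=V+1]+2\,\Prob[U=V]+\Prob[U=V-1]\right\},\nonumber
\end{equation}
where $U\sim \bin(\ell,p)$ and $V\sim\bin(m-1-\ell,p)$ are independent.

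Substituting this into the display for $g'_m(1/2)$ and recognizing $2^{-(m-1)}\binom{m-1}{\ell}$ as the law of $\ell\sim\bin(m-1,1/2)$, I would assemble the averaged quantity into a single clean probabilistic representation. Interpreting each of $m-1$ sites as independently $B$/$R$ with probability $1/2$ and independently successful with probability $p$, and letting $D=U-V$ be the signed difference of successes, the symmetry $p_B=p_R$ gives $\Prob[D=1]=\Prob[D=-1]$ and hence
\begin{equation}
g'_m(1/2) = mp\left\{\Prob[D=0]+\Prob[D=1]\right\} = mp\,\phi(p),\qquad \phi(p)=\sum_{n=0}^{m-1}\binom{m-1}{n}c_n\,p^n(1-p)^{m-1-n},\nonumber
\end{equation}
where $c_n=\binom{n}{\lfloor n/2\rfloor}2^{-n}$; here the identity $\Prob[D\in\{0,1\}\mid N=n]=c_n$ for the total number $N$ of successes follows because, given $n$ successes, $D$ is a symmetric $\pm1$ walk of length $n$.

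The goal then becomes showing $\frac{d}{dp}\big[p\,\phi(p)\big]>0$ on $(0,1)$. Applying the Bernstein-basis derivative identity $\phi'(p)=(m-1)\sum_{n=0}^{m-2}(c_{n+1}-c_n)\binom{m-2}{n}p^n(1-p)^{m-2-n}$ and using $(m-1)\binom{m-2}{k-1}=k\binom{m-1}{k}$ to re-collect terms, I would arrive at
\begin{equation}
\phi(p)+p\,\phi'(p) = \sum_{k=0}^{m-1}\binom{m-1}{k}p^k(1-p)^{m-1-k}\,d_k,\qquad d_k:=(k+1)c_k-k\,c_{k-1}\ \ (k\ge1),\ \ d_0:=c_0.\nonumber
\end{equation}
The decisive computation is the evaluation of $d_k$: using $c_{2j-1}=c_{2j}$ (from $\binom{2j}{j}=2\binom{2j-1}{j-1}$) and the ratio $c_{2j+1}=\tfrac{2j+1}{2(j+1)}c_{2j}$, one checks that $d_k=0$ for every odd $k$ while $d_k=c_k>0$ for every even $k$. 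Consequently $\phi(p)+p\,\phi'(p)=\sum_{k\ \mathrm{even}}\binom{m-1}{k}c_k\,p^k(1-p)^{m-1-k}$, whose $k=0$ term alone equals $(1-p)^{m-1}>0$, so the whole expression is strictly positive; multiplying by $m$ shows that $g'_m(1/2)=m\big[p\,\phi(p)\big]$ is strictly increasing in $p$ on $(0,1)$. The main obstacle I anticipate is twofold: carrying out the pivotal first-difference computation carefully enough to land on the exact $\tfrac{p}{2}\{\cdots\}$ form, and pinning down the vanishing of $d_k$ at odd indices, since it is this algebraic cancellation—rather than any monotonicity of $\phi$ itself, which is in fact \emph{decreasing}—that forces the derivative to be positive.
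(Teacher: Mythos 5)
Your proof is correct, and it takes a genuinely different route from the paper's. The paper works with the representation \eqref{g'_{m}(1/2)} of $g'_{m}(1/2)$ as a linear combination of the functions $f_{m}^{\ell}(p)$, $\ell\leqslant\lfloor(m-1)/2\rfloor$, with \emph{negative} coefficients $(2\ell-m)$ plus a $p$-independent constant, and then proves, via a Russo-type pivotal computation with auxiliary uniforms, the derivative formula \eqref{objective_derivative}, which shows that each $f_{m}^{\ell}(p)$ is strictly decreasing in $p$; monotonicity follows because negative constants multiply decreasing functions. You instead difference in $\ell$ rather than differentiate in $p$: toggling the colour of a single child yields the exact first-difference identity $f_{m}(\ell+1)-f_{m}(\ell)=\tfrac{p}{2}\left\{2\Prob[U=V]+\Prob[U=V+1]+\Prob[U=V-1]\right\}$ with $U\sim\bin(\ell,p)$ and $V\sim\bin(m-1-\ell,p)$ independent (I checked this by conditioning on the pivotal coin; the $X_{t}(v_{\ell+1})=0$ contributions cancel exactly as you say), and averaging against $\bin(m-1,1/2)$ gives the closed form $g'_{m}(1/2)=mp\,\Prob[D\in\{0,1\}]=mp\,\phi(p)$, where $\phi$ is a Bernstein polynomial with coefficients $c_{n}=\binom{n}{\lfloor n/2\rfloor}2^{-n}$. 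The conclusion then rests on the algebraic facts $d_{k}=(k+1)c_{k}-kc_{k-1}=0$ for odd $k$ and $d_{k}=c_{k}>0$ for even $k$, both consequences of $c_{2j-1}=c_{2j}$; these are correct, so $\phi+p\phi'$ is a strictly positive Bernstein combination on $(0,1)$ and $g'_{m}(1/2)=mp\,\phi(p)$ is strictly increasing there. As for what each route buys: the paper's yields the formula \eqref{objective_derivative} for $\frac{d}{dp}f_{m}^{\ell}(p)$, of independent interest; yours yields an explicit polynomial for $g'_{m}(1/2)$ --- it reproduces $g'_{3}(1/2)=3p-3p^{2}+\tfrac{3}{2}p^{3}$ from \S\ref{subsec:small_m} at once, and it evaluates the endpoints directly, $g'_{m}(1/2)\big|_{p=0}=0$ and $g'_{m}(1/2)\big|_{p=1}=m\binom{m-1}{\lfloor(m-1)/2\rfloor}2^{-(m-1)}$, thereby absorbing the separate computation the paper carries out after Theorem~\ref{thm:g'_{m}(1/2)_increasing} (and, incidentally, showing that the $p$-independent term in \eqref{g'_{m}(1/2)} should carry the factor $\tfrac{m}{2^{m-1}}$ rather than $\tfrac{m}{2^{m-2}}$, a slip that is immaterial to the monotonicity argument since that term does not depend on $p$). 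Your closing emphasis is also the right one: positivity comes from the cancellation at odd $k$, not from any monotonicity of $\phi$, which is indeed decreasing.
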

\begin{proof}
We have, so far, used the notation $f_{m}(\ell)$, for each $\ell \in \{0,1,\ldots,m-1\}$, that does \emph{not} make explicit the dependence that these functions (recall their definitions from \eqref{f_{m}^{abs}}) have on $p$. Since this proof constitutes investigating the behaviour of these functions with respect to $p$, we tweak these notations a little (just for the sake of this proof and not beyond that), and replace $f_{m}(\ell)$ by $f_{m}^{\ell}(p)$, for each $\ell \in \left\{0,1,\ldots,m\right\}$. Likewise, to emphasize the dependence of $g'_{m}(1/2)$ on the value of $p$ under consideration, we write $g'_{m}(1/2)\big|_{p}$ throughout the proof of Theorem~\ref{thm:g'_{m}(1/2)_increasing}.

Utilizing \eqref{g'(x)} and \eqref{symm_cond} (along with the fact that $f_{m}^{m/2}(p)=1/2$ when $m$ is even, stated right after \eqref{symm_cond}) to obtain, for each $p \in (0,1)$:
\begin{align}
g'_{m}\left(\frac{1}{2}\right)\Big|_{p} ={}& \frac{m}{2^{m-1}}\sum_{\ell=0}^{m-1}\left\{f_{m}^{\ell+1}(p)-f_{m}^{\ell}(p)\right\}{m-1 \choose \ell}=\frac{m}{2^{m-2}}\left[\sum_{\ell=0}^{\left\lfloor\frac{m-1}{2}\right\rfloor}\frac{(m-1)!(2\ell-m)f_{m}^{\ell}(p)}{\ell!(m-\ell)!}+{m-1 \choose \left\lfloor\frac{m-1}{2}\right\rfloor}\right].\label{g'_{m}(1/2)}
\end{align}
Our task is to establish that the derivative of the expression in \eqref{g'_{m}(1/2)}, with respect to $p$, is strictly positive for each $p \in (0,1)$. Since $\ell \leqslant \left\lfloor\frac{m-1}{2}\right\rfloor \implies 2\ell-m < 0$, this amounts to showing that the derivative of $f_{m}(\ell)$ with respect to $p$, for each $\ell \in \left\{0,1,\ldots,\left\lfloor\frac{m-1}{2}\right\rfloor\right\}$, is strictly negative for each $p \in (0,1)$. To this end, we prove the following claim:
\begin{equation}\label{objective_derivative}
\frac{d}{dp}f_{m}^{\ell}(p) = -\frac{m-2\ell}{2}\sum_{i=0}^{\ell}{m-\ell \choose i}{\ell \choose i}p^{2i}(1-p)^{m-1-2i} \text{ for each } \ell \in \left\{0,1,\ldots,\left\lfloor\frac{m-1}{2}\right\rfloor\right\}.
\end{equation} 
To prove \eqref{objective_derivative}, we fix an $\ell \in \left\{0,1,\ldots,\left\lfloor\frac{m-1}{2}\right\rfloor\right\}$ for the rest of the proof.

The idea employed in proving \eqref{objective_derivative} is the same as that inspiring Russo's formula (see, for instance, Section 2.4 of \cite{grimmett1999percolation}). Fix a vertex $v$ of $\mathbb{T}_{m}$, and let $v_{1}, \ldots, v_{m}$ denote its children. In order to take into account how the rate of change of $f_{m}^{\ell}(p)$ with respect to $p$ is impacted by the outcome of the experiment performed by the agent at each of $v_{1}, \ldots, v_{m}$ at time-step $t$, let us fix $p_{1}, p_{2}, \ldots, p_{m}$, each in $(0,1)$, and let $X_{t}(v_{1})$, $\ldots$, $X_{t}(v_{m})$ be mutually independent, but with $X_{t}(v_{i})$ now following Bernoulli$(p_{i})$ for each $i \in \{1,2,\ldots,m\}$. We let $\overline{p}=(p_{1},p_{2},\ldots,p_{m})$, and introduce the generalized definition 
\begin{equation}
f_{m}^{\ell}(\overline{p}) = \Prob\left[C_{t+1}(v)=B\big|C_{t}(v_{i})=B \text{ for each } i \in \{1,\ldots,\ell\} \text{ and } C_{t}(v_{i})=R \text{ for each } i \in \{\ell+1,\ldots,m\}\right],\nonumber
\end{equation}
where the rule for deciding $C_{t+1}(v)$, given $C_{t}(v_{1}), \ldots, C_{t}(v_{m})$, is the same as that described in \eqref{abs_maj_rule}. Note that, in our original set-up, we have $p_{1}=p_{2}=\cdots=p_{m}=p$, and $f_{m}^{\ell}(\overline{p})$ simply boils down to $f_{m}^{\ell}(p)$.

If we fix any $0 \leqslant \Delta(p_{i}) \leqslant 1-p_{i}$ for each $i \in \{1,2,\ldots,m\}$, and set $\Delta(\overline{p})=\left(\Delta(p_{1}),\Delta(p_{2}),\ldots,\Delta(p_{m})\right)$, then, by the well-known formula for total derivatives, we obtain
\begin{equation}
f_{m}^{\ell}\left(\overline{p}+\Delta(\overline{p})\right) - f_{m}^{\ell}(\overline{p}) = \sum_{i=1}^{m}\frac{\partial}{\partial p_{i}}f_{m}^{\ell}(\overline{p}) \cdot \Delta(p_{i}).\label{total_derivative}
\end{equation}
Note that, in our original set-up, where $p_{1}=p_{2}=\cdots=p_{m}=p$, we have, due to symmetry: 
\begin{equation}
\frac{\partial}{\partial p_{1}}f_{m}^{\ell}(\overline{p})\Big|_{(p,p,\ldots,p)} = \cdots = \frac{\partial}{\partial p_{\ell}}f_{m}^{\ell}(\overline{p})\Big|_{(p,p,\ldots,p)} \text{ and } \frac{\partial}{\partial p_{\ell+1}}f_{m}^{\ell}(\overline{p})\Big|_{(p,p,\ldots,p)} = \cdots = \frac{\partial}{\partial p_{m}}f_{m}^{\ell}(\overline{p})\Big|_{(p,p,\ldots,p)},\label{all_p_{i}_equal}
\end{equation}
which is why it suffices for us to find only $\frac{\partial}{\partial p_{1}}f_{m}^{\ell}(\overline{p})$ and $\frac{\partial}{\partial p_{\ell+1}}f_{m}^{\ell}(\overline{p})$. Furthermore, when $p_{1}=p_{2}=\cdots=p_{m}=p$ and $\Delta(p_{1})=\Delta(p_{2})=\cdots=\Delta(p_{m})=\Delta(p)$, we have 
\begin{equation}
\frac{d}{dp}f_{m}^{\ell}(p) = \lim_{\Delta(p) \rightarrow 0}\frac{1}{\Delta(p)}\left\{f_{m}^{\ell}\left(\overline{p}+\Delta(\overline{p})\right)\Big|_{(p+\Delta(p),p+\Delta(p),\ldots,p+\Delta(p))} - f_{m}^{\ell}(\overline{p})\Big|_{(p,p,\ldots,p)}\right\}.\label{derivative_as_limit}
\end{equation}

For each $i \in \{1,2,\ldots,m\}$, we set $\Delta^{(i)}(\overline{p})$ to be the $m$-tuple in which the $i$-th coordinate equals $\Delta(p_{i})$, and every other coordinate equals $0$. Note that computing $\frac{\partial}{\partial p_{i}}f_{m}^{\ell}(\overline{p})$ amounts to finding $f_{m}^{\ell}\left(\overline{p}+\Delta^{(i)}(\overline{p})\right)-f_{m}^{\ell}(\overline{p})$ for each $i \in \{1,2,\ldots,m\}$.

Let $U_{1}, U_{2}, \ldots, U_{m}$ denote i.i.d.\ Uniform$[0,1]$ random variables. When $X_{t}(v_{i})$ follows Bernoulli$(p_{i})$, we can write $X_{t}(v_{i}) = \mathbf{1}_{U_{i} \leqslant p_{i}}$, for each $i \in \{1,2,\ldots,m\}$. Consequently, analogous to \eqref{intermediate_1}, we obtain
\begin{align}
f_{m}^{\ell}(\overline{p}) ={}& \Prob\left[\sum_{i=1}^{\ell}\mathbf{1}_{U_{i} \leqslant p_{i}} > \sum_{i=\ell+1}^{m}\mathbf{1}_{U_{i} \leqslant p_{i}}\right] + \frac{1}{2}\Prob\left[\sum_{i=1}^{\ell}\mathbf{1}_{U_{i} \leqslant p_{i}} = \sum_{i=\ell+1}^{m}\mathbf{1}_{U_{i} \leqslant p_{i}}\right].\nonumber
\end{align}
We utilize this to now compute $f_{m}^{\ell}\left(\overline{p}+\Delta^{(1)}(\overline{p})\right)-f_{m}^{\ell}(\overline{p})$, considering various possible scenarios:
\begin{enumerate}
\item When $\sum_{i=2}^{\ell}\mathbf{1}_{U_{i} \leqslant p_{i}} > \sum_{i=\ell+1}^{m}\mathbf{1}_{U_{i}\leqslant p_{i}}$, since each of $\mathbf{1}_{U_{1} \leqslant p_{1}}$ and $\mathbf{1}_{U_{1} \leqslant p_{1}+\Delta(p_{1})}$ is non-negative, we have $\sum_{i=1}^{\ell}\mathbf{1}_{U_{i} \leqslant p_{i}} > \sum_{i=\ell+1}^{m}\mathbf{1}_{U_{i}\leqslant p_{i}}$ as well as $\mathbf{1}_{U_{1} \leqslant p_{1}+\Delta(p_{1})}+\sum_{i=2}^{\ell}\mathbf{1}_{U_{i} \leqslant p_{i}} > \sum_{i=\ell+1}^{m}\mathbf{1}_{U_{i}\leqslant p_{i}}$, which means that $\Prob\left[\sum_{i=2}^{\ell}\mathbf{1}_{U_{i} \leqslant p_{i}} > \sum_{i=\ell+1}^{m}\mathbf{1}_{U_{i}\leqslant p_{i}}\right]$ is present in each of $f_{m}^{\ell}\left(\overline{p}+\Delta^{(1)}(\overline{p})\right)$ and $f_{m}^{\ell}(\overline{p})$. Consequently, it leaves no contribution in $f_{m}^{\ell}\left(\overline{p}+\Delta^{(1)}(\overline{p})\right)-f_{m}^{\ell}(\overline{p})$.

\item When $\sum_{i=2}^{\ell}\mathbf{1}_{U_{i} \leqslant p_{i}} = \sum_{i=\ell+1}^{m}\mathbf{1}_{U_{i}\leqslant p_{i}}$, its contribution to $f_{m}^{\ell}(\overline{p})$ is given by 
\begin{align}
\Prob\left[U_{1} \leqslant p_{1},\sum_{i=2}^{\ell}\mathbf{1}_{U_{i} \leqslant p_{i}} = \sum_{i=\ell+1}^{m}\mathbf{1}_{U_{i}\leqslant p_{i}}\right]+\frac{1}{2}\Prob\left[U_{1} > p_{1},\sum_{i=2}^{\ell}\mathbf{1}_{U_{i} \leqslant p_{i}} = \sum_{i=\ell+1}^{m}\mathbf{1}_{U_{i}\leqslant p_{i}}\right],\nonumber
\end{align}
whereas its contribution to $f_{m}^{\ell}\left(\overline{p}+\Delta^{(1)}(\overline{p})\right)$ is given by 
\begin{align}
\Prob\left[U_{1} \leqslant p_{1}+\Delta(p_{1}),\sum_{i=2}^{\ell}\mathbf{1}_{U_{i} \leqslant p_{i}} = \sum_{i=\ell+1}^{m}\mathbf{1}_{U_{i}\leqslant p_{i}}\right]+\frac{1}{2}\Prob\left[U_{1} > p_{1}+\Delta(p_{1}),\sum_{i=2}^{\ell}\mathbf{1}_{U_{i} \leqslant p_{i}} = \sum_{i=\ell+1}^{m}\mathbf{1}_{U_{i}\leqslant p_{i}}\right].\nonumber
\end{align}
Combining these, the contribution of this scenario to $f_{m}^{\ell}\left(\overline{p}+\Delta^{(1)}(\overline{p})\right)-f_{m}^{\ell}(\overline{p})$ becomes 
\begin{equation}
\frac{1}{2}\Prob\left[p_{1} < U_{1} \leqslant p_{1}+\Delta(p_{1}),\sum_{i=2}^{\ell}\mathbf{1}_{U_{i} \leqslant p_{i}} = \sum_{i=\ell+1}^{m}\mathbf{1}_{U_{i}\leqslant p_{i}}\right] = \frac{\Delta(p_{1})}{2}\Prob\left[\sum_{i=2}^{\ell}\mathbf{1}_{U_{i} \leqslant p_{i}} = \sum_{i=\ell+1}^{m}\mathbf{1}_{U_{i}\leqslant p_{i}}\right].\nonumber
\end{equation}
\item When $\sum_{i=2}^{\ell}\mathbf{1}_{U_{i} \leqslant p_{i}} = \sum_{i=\ell+1}^{m}\mathbf{1}_{U_{i}\leqslant p_{i}}-1$, its contribution to $f_{m}^{\ell}(\overline{p})$ is given by
\begin{align}
\frac{1}{2}\Prob\left[U_{1} \leqslant p_{1}, \sum_{i=2}^{\ell}\mathbf{1}_{U_{i} \leqslant p_{i}} = \sum_{i=\ell+1}^{m}\mathbf{1}_{U_{i}\leqslant p_{i}}-1\right],\nonumber
\end{align}
whereas its contribution to $f_{m}^{\ell}\left(\overline{p}+\Delta^{(1)}(\overline{p})\right)$ is given by 
\begin{align}
\frac{1}{2}\Prob\left[U_{1} \leqslant p_{1}+\Delta(p_{1}), \sum_{i=2}^{\ell}\mathbf{1}_{U_{i} \leqslant p_{i}} = \sum_{i=\ell+1}^{m}\mathbf{1}_{U_{i}\leqslant p_{i}}-1\right].\nonumber
\end{align}
Thus, the contribution of this scenario to $f_{m}^{\ell}\left(\overline{p}+\Delta^{(1)}(\overline{p})\right)-f_{m}^{\ell}(\overline{p})$ equals
\begin{align}
\frac{1}{2}\Prob\left[p_{1} < U_{1} \leqslant p_{1}+\Delta(p_{1}),\sum_{i=2}^{\ell}\mathbf{1}_{U_{i} \leqslant p_{i}} = \sum_{i=\ell+1}^{m}\mathbf{1}_{U_{i}\leqslant p_{i}}-1\right] = \frac{\Delta(p_{1})}{2}\Prob\left[\sum_{i=2}^{\ell}\mathbf{1}_{U_{i} \leqslant p_{i}} = \sum_{i=\ell+1}^{m}\mathbf{1}_{U_{i}\leqslant p_{i}}-1\right].\nonumber%\label{contribution_2}
\end{align}
\item Finally, when $\sum_{i=2}^{\ell}\mathbf{1}_{U_{i} \leqslant p_{i}} < \sum_{i=\ell+1}^{m}\mathbf{1}_{U_{i}\leqslant p_{i}}-1$, its contribution to each of $f_{m}^{\ell}(\overline{p})$ and $f_{m}^{\ell}\left(\overline{p}+\Delta^{(1)}(\overline{p})\right)$ is $0$, so that its contribution to their difference is also $0$.
\end{enumerate}
Combining the contributions from the four scenarios considered above, we obtain:
\begin{align}
f_{m}^{\ell}\left(\overline{p}+\Delta^{(1)}(\overline{p})\right)-f_{m}^{\ell}(\overline{p}) ={}& \frac{\Delta(p_{1})}{2}\left\{\Prob\left[\sum_{i=2}^{\ell}\mathbf{1}_{U_{i} \leqslant p_{i}} = \sum_{i=\ell+1}^{m}\mathbf{1}_{U_{i}\leqslant p_{i}}\right]+\Prob\left[\sum_{i=2}^{\ell}\mathbf{1}_{U_{i} \leqslant p_{i}} = \sum_{i=\ell+1}^{m}\mathbf{1}_{U_{i}\leqslant p_{i}}-1\right]\right\}.\label{contribution_first_coordinate}
\end{align}

We now compute $f_{m}^{\ell}\left(\overline{p}+\Delta^{(\ell+1)}(\overline{p})\right)-f_{m}^{\ell}(\overline{p})$ via a case-by-case analysis in the same manner as above:
\begin{enumerate}
\item When $\sum_{i=1}^{\ell}\mathbf{1}_{U_{i}\leqslant p_{i}} > \sum_{i=\ell+2}^{m}\mathbf{1}_{U_{i}\leqslant p_{i}}+1$, then we have $\sum_{i=1}^{\ell}\mathbf{1}_{U_{i}\leqslant p_{i}} > \sum_{i=\ell+1}^{m}\mathbf{1}_{U_{i}\leqslant p_{i}}$ as well as $\sum_{i=1}^{\ell}\mathbf{1}_{U_{i}\leqslant p_{i}} > \mathbf{1}_{U_{\ell+1}\leqslant p_{\ell+1}}+\sum_{i=\ell+2}^{m}\mathbf{1}_{U_{i}\leqslant p_{i}}$. Thus, the contribution of this scenario to each of $f_{m}^{\ell}(\overline{p})$ and $f_{m}^{\ell}\left(\overline{p}+\Delta^{(\ell+1)}(\overline{p})\right)$ equals $\Prob\left[\sum_{i=1}^{\ell}\mathbf{1}_{U_{i}\leqslant p_{i}} > \sum_{i=\ell+2}^{m}\mathbf{1}_{U_{i}\leqslant p_{i}}+1\right]$, and therefore, its contribution to their difference is $0$.
\item When $\sum_{i=1}^{\ell}\mathbf{1}_{U_{i}\leqslant p_{i}} = \sum_{i=\ell+2}^{m}\mathbf{1}_{U_{i}\leqslant p_{i}}+1$, its contribution to $f_{m}^{\ell}(\overline{p})$ is 
\begin{align}
\Prob\left[\sum_{i=1}^{\ell}\mathbf{1}_{U_{i}\leqslant p_{i}} = \sum_{i=\ell+2}^{m}\mathbf{1}_{U_{i}\leqslant p_{i}}+1,U_{\ell+1}>p_{\ell+1}\right]+\frac{1}{2}\Prob\left[\sum_{i=1}^{\ell}\mathbf{1}_{U_{i}\leqslant p_{i}} = \sum_{i=\ell+2}^{m}\mathbf{1}_{U_{i}\leqslant p_{i}}+1,U_{\ell+1}\leqslant p_{\ell+1}\right].\nonumber
\end{align}
Likewise, the contribution of this scenario to $f_{m}^{\ell}\left(\overline{p}+\Delta^{(\ell+1)}(\overline{p})\right)$ equals
\begin{align}
{}&\Prob\left[\sum_{i=1}^{\ell}\mathbf{1}_{U_{i}\leqslant p_{i}} = \sum_{i=\ell+2}^{m}\mathbf{1}_{U_{i}\leqslant p_{i}}+1,U_{\ell+1}>p_{\ell+1}+\Delta(p_{\ell+1})\right]\nonumber\\&+\frac{1}{2}\Prob\left[\sum_{i=1}^{\ell}\mathbf{1}_{U_{i}\leqslant p_{i}} = \sum_{i=\ell+2}^{m}\mathbf{1}_{U_{i}\leqslant p_{i}}+1,U_{\ell+1}\leqslant p_{\ell+1}+\Delta(p_{\ell+1})\right].\nonumber
\end{align}
Consequently, the contribution of this scenario to $f_{m}^{\ell}\left(\overline{p}+\Delta^{(\ell+1)}(\overline{p})\right)-f_{m}^{\ell}(\overline{p})$ equals
\begin{align}
{}&-\frac{1}{2}\Prob\left[\sum_{i=1}^{\ell}\mathbf{1}_{U_{i}\leqslant p_{i}} = \sum_{i=\ell+2}^{m}\mathbf{1}_{U_{i}\leqslant p_{i}}+1,p_{\ell+1}< U_{\ell+1} \leqslant p_{\ell+1}+\Delta(p_{\ell+1})\right]\nonumber\\&=-\frac{\Delta(p_{\ell+1})}{2}\Prob\left[\sum_{i=1}^{\ell}\mathbf{1}_{U_{i}\leqslant p_{i}} = \sum_{i=\ell+2}^{m}\mathbf{1}_{U_{i}\leqslant p_{i}}+1\right].\nonumber
\end{align}
\item When $\sum_{i=1}^{\ell}\mathbf{1}_{U_{i}\leqslant p_{i}} = \sum_{i=\ell+2}^{m}\mathbf{1}_{U_{i}\leqslant p_{i}}$, its contribution to $f_{m}^{\ell}(\overline{p})$ is 
\begin{align}
\frac{1}{2}\Prob\left[\sum_{i=1}^{\ell}\mathbf{1}_{U_{i}\leqslant p_{i}} = \sum_{i=\ell+2}^{m}\mathbf{1}_{U_{i}\leqslant p_{i}}, U_{\ell+1} > p_{\ell+1}\right],\nonumber
\end{align}
and its contribution to $f_{m}^{\ell}\left(\overline{p}+\Delta^{(\ell+1)}(\overline{p})\right)$ equals
\begin{align}
\frac{1}{2}\Prob\left[\sum_{i=1}^{\ell}\mathbf{1}_{U_{i}\leqslant p_{i}} = \sum_{i=\ell+2}^{m}\mathbf{1}_{U_{i}\leqslant p_{i}}, U_{\ell+1} > p_{\ell+1}+\Delta(p_{\ell+1})\right].\nonumber
\end{align}
The contribution of this scenario to $f_{m}^{\ell}\left(\overline{p}+\Delta^{(\ell+1)}(\overline{p})\right)-f_{m}^{\ell}(\overline{p})$ is thus
\begin{align}
{}&-\frac{1}{2}\Prob\left[p_{\ell+1}<U_{\ell+1}\leqslant p_{\ell+1}+\Delta(p_{\ell+1}),\sum_{i=1}^{\ell}\mathbf{1}_{U_{i}\leqslant p_{i}} = \sum_{i=\ell+2}^{m}\mathbf{1}_{U_{i}\leqslant p_{i}}\right]=-\frac{\Delta(p_{\ell+1})}{2}\Prob\left[\sum_{i=1}^{\ell}\mathbf{1}_{U_{i}\leqslant p_{i}} = \sum_{i=\ell+2}^{m}\mathbf{1}_{U_{i}\leqslant p_{i}}\right].\nonumber
\end{align}
\item Finally, when $\sum_{i=1}^{\ell}\mathbf{1}_{U_{i}\leqslant p_{i}} < \sum_{i=\ell+2}^{m}\mathbf{1}_{U_{i}\leqslant p_{i}}$, its contribution to each of $f_{m}^{\ell}\left(\overline{p}+\Delta^{(\ell+1)}(\overline{p})\right)$ and $f_{m}^{\ell}(\overline{p})$ is $0$, so that its contribution to their difference also equals $0$.
\end{enumerate}
Combining the contributions from the four scenarios considered above, we obtain:
\begin{align}
f_{m}^{\ell}\left(\overline{p}+\Delta^{(\ell+1)}(\overline{p})\right)-f_{m}^{\ell}(\overline{p})={}&-\frac{\Delta(p_{\ell+1})}{2}\left\{\Prob\left[\sum_{i=1}^{\ell}\mathbf{1}_{U_{i}\leqslant p_{i}} = \sum_{i=\ell+2}^{m}\mathbf{1}_{U_{i}\leqslant p_{i}}+1\right]+\Prob\left[\sum_{i=1}^{\ell}\mathbf{1}_{U_{i}\leqslant p_{i}} = \sum_{i=\ell+2}^{m}\mathbf{1}_{U_{i}\leqslant p_{i}}\right]\right\}.\label{contribution_ell+1_coordinate}
\end{align}

We come back to our set-up, i.e.\ where $p_{1}=p_{2}=\cdots=p_{m}=p$ and $\Delta(p_{1})=\Delta(p_{2})=\cdots=\Delta(p_{m})=\Delta(p)$. Setting $Z=\sum_{i=2}^{\ell}\mathbf{1}_{U_{i}\leqslant p}$ and $W=\sum_{i=\ell+2}^{m}\mathbf{1}_{U_{i}\leqslant p}$ for the sake of brevity, and recalling that $X_{t}(v_{1})=\mathbf{1}_{U_{1}\leqslant p}$ and $X_{t}(v_{\ell+1})=\mathbf{1}_{U_{\ell+1} \leqslant p}$, we have, from \eqref{total_derivative}, \eqref{all_p_{i}_equal}, \eqref{contribution_first_coordinate} and \eqref{contribution_ell+1_coordinate}:
\begin{align}
{}& f_{m}^{\ell}(\overline{p}+\Delta(\overline{p}))\Big|_{(p+\Delta(p),p+\Delta(p),\ldots,p+\Delta(p))}-f_{m}^{\ell}(\overline{p})\Big|_{(p,p,\ldots,p)}\nonumber\\
={}& \frac{\ell \Delta(p)}{2}\Prob\left[Z=X_{t}(v_{\ell+1})+W\right]+\frac{\ell \Delta(p)}{2}\Prob\left[Z=X_{t}(v_{\ell+1})+W-1\right]\nonumber\\&-\frac{(m-\ell) \Delta(p)}{2}\Prob\left[X_{t}(v_{1})+Z=W+1\right]-\frac{(m-\ell) \Delta(p)}{2}\Prob\left[X_{t}(v_{1})+Z=W\right]\nonumber\\
={}&\frac{\Delta(p)}{2}\Big\{\frac{\ell}{1-p}\Prob\left[Z=X_{t}(v_{\ell+1})+W,X_{t}(v_{1})=0\right]+\ell\Prob\left[Z=X_{t}(v_{\ell+1})+W-1\right]\nonumber\\&-(m-\ell)\Prob\left[X_{t}(v_{1})+Z=W+1\right]-\frac{m-\ell}{1-p}\Prob\left[X_{t}(v_{1})+Z=W,X_{t}(v_{\ell+1})=0\right]\Big\}\nonumber\\
={}&\frac{\Delta(p)}{2}\Big\{\frac{\ell}{1-p}\Prob\left[X_{t}(v_{1})+Z=X_{t}(v_{\ell+1})+W,X_{t}(v_{1})=0\right]+\ell\Prob\left[Z=X_{t}(v_{\ell+1})+W-1\right]\nonumber\\&-(m-\ell)\Prob\left[X_{t}(v_{1})+Z=W+1\right]-\frac{m-\ell}{1-p}\Prob\left[X_{t}(v_{1})+Z=W+X_{t}(v_{\ell+1}),X_{t}(v_{\ell+1})=0\right]\Big\}\nonumber\\
={}&\frac{\Delta(p)}{2}\Big\{\frac{\ell}{1-p}\Prob\left[X_{t}(v_{1})+Z=X_{t}(v_{\ell+1})+W\right]-\frac{\ell}{1-p}\Prob\left[X_{t}(v_{1})+Z=X_{t}(v_{\ell+1})+W,X_{t}(v_{1})=1\right]\nonumber\\&+\ell\Prob\left[1+Z=X_{t}(v_{\ell+1})+W\right]-(m-\ell)\Prob\left[X_{t}(v_{1})+Z=W+1\right]-\frac{m-\ell}{1-p}\Prob\left[X_{t}(v_{1})+Z=W+X_{t}(v_{\ell+1})\right]\nonumber\\&+\frac{m-\ell}{1-p}\Prob\left[X_{t}(v_{1})+Z=W+X_{t}(v_{\ell+1}),X_{t}(v_{\ell+1})=1\right]\Big\}\nonumber\\
={}&\frac{\Delta(p)}{2}\Big\{-\frac{(m-2\ell)}{1-p}\Prob\left[X_{t}(v_{1})+Z=X_{t}(v_{\ell+1})+W\right]-\frac{\ell p}{1-p}\Prob\left[1+Z=X_{t}(v_{\ell+1})+W\right]\nonumber\\&+\ell\Prob\left[1+Z=X_{t}(v_{\ell+1})+W\right]-(m-\ell)\Prob\left[X_{t}(v_{1})+Z=W+1\right]+\frac{(m-\ell)p}{1-p}\Prob\left[X_{t}(v_{1})+Z=W+1\right]\Big\}\nonumber\\
={}&\frac{\Delta(p)}{2}\Big\{-\frac{(m-2\ell)}{1-p}\Prob\left[X_{t}(v_{1})+Z=X_{t}(v_{\ell+1})+W\right]+\frac{\ell(1-2p)}{1-p}\Prob\left[1+Z=X_{t}(v_{\ell+1})+W\right]\nonumber\\&-\frac{(m-\ell)(1-2p)}{1-p}\Prob\left[X_{t}(v_{1})+Z=W+1\right]\Big\}.\label{intermediate_8}
\end{align}
In the computation above, we have made use of the mutual independence of the random variables $X_{t}(v_{1})$, $Z$, $X_{t}(v_{\ell+1})$ and $W$. Note that, in our set-up (i.e.\ where $p_{1}=p_{2}=\cdots=p_{m}=p$ and $X_{t}(v_{i})$ are i.i.d.\ Bernoulli$(p)$ for all $i \in \{1,2,\ldots,m\}$), $Z$ follows Binomial$(\ell-1,p)$, $W$ follows Binomial$(m-\ell-1,p)$, $X_{t}(v_{\ell+1})+W$ follows Binomial$(m-\ell,p)$, and $X_{t}(v_{1})+Z$ follows Binomial$(\ell,p)$). Moreover, $Z$ and $X_{t}(v_{\ell+1})+W$ are independent of each other, as are $X_{t}(v_{1})+Z$ and $W$. We now focus on the second part of the final expression of \eqref{intermediate_8}, recalling from \eqref{objective_derivative} that we consider $\ell \leqslant \left\lfloor\frac{m-1}{2}\right\rfloor$, which implies $\ell < m-\ell$:
\begin{align}
{}&\ell\Prob\left[1+Z=X_{t}(v_{\ell+1})+W\right]-(m-\ell)\Prob\left[X_{t}(v_{1})+Z=W+1\right]\nonumber\\
={}&\ell\sum_{r=0}^{\ell-1}\Prob\left[Z=r,X_{t}(v_{\ell+1})+W=r+1\right]-(m-\ell)\sum_{r=0}^{\ell-1}\Prob\left[X_{t}(v_{1})+Z=r+1,W=r\right]\nonumber\\
={}&\ell\sum_{r=0}^{\ell-1}{\ell-1 \choose r}p^{r}(1-p)^{\ell-1-r}{m-\ell \choose r+1}p^{r+1}(1-p)^{m-\ell-r-1}\nonumber\\&-(m-\ell)\sum_{r=0}^{\ell-1}{\ell \choose r+1}p^{r+1}(1-p)^{\ell-r-1}{m-\ell-1 \choose r}p^{r}(1-p)^{m-\ell-1-r} = 0.\nonumber
\end{align}
Incorporating this observation into \eqref{intermediate_8}, and applying \eqref{derivative_as_limit}, we obtain:
\begin{align}
\frac{d}{dp}f_{m}^{\ell}(p) ={}& -\frac{(m-2\ell)}{2(1-p)}\Prob\left[X_{t}(v_{1})+Z=X_{t}(v_{\ell+1})+W\right]=-\frac{(m-2\ell)}{2(1-p)}\sum_{r=0}^{\ell}\Prob\left[X_{t}(v_{1})+Z=X_{t}(v_{\ell+1})+W=r\right]\nonumber\\
={}&-\frac{m-2\ell}{2}\sum_{r=0}^{\ell}{\ell \choose r}{m-\ell \choose r}p^{2r}(1-p)^{m-1-2r},\nonumber
\end{align}
which is precisely the identity in \eqref{objective_derivative} that we set out to establish. From \eqref{g'_{m}(1/2)} and \eqref{objective_derivative}, we conclude that $\frac{d}{dp}g'_{m}(1/2)\big|_{p} > 0$ for each $p \in (0,1)$. This completes the proof of Theorem~\ref{thm:g'_{m}(1/2)_increasing}.
\end{proof}

As explained in \eqref{broad_step_4}, our task now is to show that $g'_{m}(1/2)\big|_{p=0} < 1$ and $g'_{m}(1/2)\big|_{p=1} > 1$. Recall, from the very last paragraph of the proof of Theorem~\ref{thm:convex_concave}, that $f_{m}^{\ell}(1)=0$ for each $\ell \leqslant \left\lfloor\frac{m-1}{2}\right\rfloor$. This, along with \eqref{g'_{m}(1/2)}, yields 
\begin{equation}
g'_{m}\left(\frac{1}{2}\right)\Big|_{p=1}=\frac{m}{2^{m-2}}{m-1 \choose \left\lfloor\frac{m-1}{2}\right\rfloor}\nonumber
\end{equation}
Comparing the values of $g'_{m}(1/2)\big|_{p=1}$ for $m=2n+1$ and $m=2(n+1)+1$, we obtain
\begin{equation}
\frac{g'_{2(n+1)+1}\left(\frac{1}{2}\right)\big|_{p=1}}{g'_{2n+1}\left(\frac{1}{2}\right)\big|_{p=1}}=\frac{2n+3}{2^{2n+1}}{2n+2 \choose n+1} \left(\frac{2n+1}{2^{2n-1}}{2n \choose n}\right)^{-1} = \frac{2n+3}{2n+2} > 1\nonumber
\end{equation}
for each $n \in \mathbb{N}_{0}$, so that $\left\{g'_{2n+1}(1/2)\big|_{p=1}\right\}_{n \in \mathbb{N}}$ is a strictly increasing sequence. Likewise, comparing the values of $g'_{m}(1/2)\big|_{p=1}$ for $m=2n$ and $m=2(n+1)$, we obtain
\begin{equation}
\frac{g'_{2(n+1)}\left(\frac{1}{2}\right)\big|_{p=1}}{g'_{2n}\left(\frac{1}{2}\right)\big|_{p=1}}=\frac{2n+2}{2^{2n}}{2n+1 \choose n}\left(\frac{2n}{2^{2n-2}}{2n-1 \choose n-1}\right)^{-1} = \frac{2n+1}{2n} > 1,\nonumber
\end{equation}
for each $n \in \mathbb{N}$, so that $\left\{g'_{2n}(1/2)\big|_{p=1}\right\}_{n \in \mathbb{N}}$ is a strictly increasing sequence. Moreover, we have $g'_{3}(1/2)\big|_{p=1}=3$ and $g'_{2}(1/2)\big|_{p=1}=2$. These observations, together, allow us to conclude that $g'_{m}(1/2)\big|_{p=1} > 1$ for every $m \in \mathbb{N}$ with $m \geqslant 2$. Finally, recall, from Remark~\ref{rem:p=0}, that when $p=0$, the function $g_{m}(x)=1/2$ for all $x \in [0,1]$, so that $g'_{m}(1/2)=0$, for every $m \in \mathbb{N}$ with $m \geqslant 2$. As explained in \eqref{broad_step_5}, the proofs of \eqref{thm:equal_part_1} and \eqref{thm:equal_part_2} of Theorem~\ref{thm:main_equal} are now complete. 

We now come to the proof of the third and last assertion made in the statement of Theorem~\ref{thm:main_equal}. To this end, we first state and prove a lemma that will be useful in other regimes of values of $(p_{B},p_{R})$:
\begin{lemma}\label{lem:increasing_function_limits}
Let $h$ be a strictly increasing, continuous function on $[0,1]$, with $h(0) \geqslant 0$, $h(1) \leqslant 1$, and three distinct fixed points, namely $0 \leqslant \alpha_{1} < \alpha_{2} < \alpha_{3} \leqslant 1$. Fix any $\gamma_{0} \in [0,1]$, and let $\gamma_{n+1}=h(\gamma_{n})$ for each $n \in \mathbb{N}_{0}$. Then the limit $\gamma = \lim_{n \rightarrow \infty}\gamma_{n}$ exists and equals 
\begin{enumerate*}
\item $\alpha_{1}$ when $\gamma_{0} \in [0,\alpha_{2})$,
\item $\alpha_{3}$ when $\gamma_{0} \in (\alpha_{2},1]$, and
\item $\alpha_{2}$ when $\gamma_{0}=\alpha_{2}$.
\end{enumerate*}
\end{lemma}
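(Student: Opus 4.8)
The plan is to treat $\gamma_{n+1}=h(\gamma_{n})$ as a monotone one-dimensional dynamical system and exploit the order-preserving nature of $h$. First I would observe that, because $h$ is strictly increasing, the orbit $\{\gamma_{n}\}_{n \in \mathbb{N}_{0}}$ is automatically monotone: if $\gamma_{1}=h(\gamma_{0}) \geqslant \gamma_{0}$, then applying $h$ repeatedly gives $\gamma_{n+1} \geqslant \gamma_{n}$ for every $n$ by a trivial induction, and symmetrically $\gamma_{1} \leqslant \gamma_{0}$ forces a non-increasing orbit. The orbit stays in $[0,1]$ (since $h(0) \geqslant 0$, $h(1) \leqslant 1$ and $h$ increasing together give $h([0,1]) \subseteq [0,1]$), so being monotone and bounded it converges, and since $h$ is continuous the limit $\gamma$ must satisfy $\gamma=h(\gamma)$, i.e.\ $\gamma \in \{\alpha_{1},\alpha_{2},\alpha_{3}\}$. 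It then only remains to identify \emph{which} fixed point is the limit, as a function of $\gamma_{0}$.

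The second step is to note that each of the four closed intervals $[0,\alpha_{1}]$, $[\alpha_{1},\alpha_{2}]$, $[\alpha_{2},\alpha_{3}]$ and $[\alpha_{3},1]$ is forward-invariant under $h$: since $h$ is increasing and fixes both endpoints of each such interval, it maps the interval into itself, so an orbit can never cross a fixed point. Consequently the limit of an orbit started in one of these intervals is one of the two fixed points bounding it, and the direction of convergence is dictated by the sign of $h(x)-x$ on the interval's interior, where this sign is constant because $h(x)-x$ is continuous and vanishes only at $\alpha_{1},\alpha_{2},\alpha_{3}$.

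The third step is to pin down these signs, and this is where the main obstacle lies. On the two \emph{outer} intervals the boundary hypotheses settle the matter at once: if $\alpha_{1}>0$ then $h(0) \geqslant 0$ together with $h(0) \neq 0$ (as $0$ is not a fixed point) gives $h(x)>x$ on $[0,\alpha_{1})$, so orbits there increase to $\alpha_{1}$; symmetrically $h(1) \leqslant 1$ forces $h(x)<x$ on $(\alpha_{3},1]$, so orbits there decrease to $\alpha_{3}$. The genuinely delicate point is the sign of $h(x)-x$ on the two \emph{middle} intervals: the data $h(0) \geqslant 0$, $h(1) \leqslant 1$ say nothing here, yet for the stated conclusion one needs $h(x)<x$ on $(\alpha_{1},\alpha_{2})$ (so that orbits fall back to $\alpha_{1}$) and $h(x)>x$ on $(\alpha_{2},\alpha_{3})$ (so that orbits climb to $\alpha_{3}$), i.e.\ one needs $h-\mathrm{id}$ to \emph{change sign} at each of $\alpha_{1},\alpha_{2},\alpha_{3}$. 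This transversality is not implied by strict monotonicity and three fixed points alone (a double-touch at $\alpha_{1}$ or $\alpha_{2}$ would leave the orbit climbing to $\alpha_{2}$ instead), so it must be imported from the structure of the particular $h$ at hand: in our applications $h=g_{m}$, and the strict convexity of $g_{m}$ on $[0,1/2]$ and strict concavity on $[1/2,1]$ (Theorem~\ref{thm:convex_concave}), together with $g'_{m}(1/2)>1$ at the repelling middle fixed point, forces exactly this alternating crossing pattern, delivering the required signs.

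With the sign pattern established the conclusion is immediate. For $\gamma_{0} \in [0,\alpha_{1}]$ the orbit stays in $[0,\alpha_{1}]$ and increases (or is constant) to $\alpha_{1}$; for $\gamma_{0} \in (\alpha_{1},\alpha_{2})$ it stays in $[\alpha_{1},\alpha_{2}]$ and, since $h<\mathrm{id}$ there, decreases to $\alpha_{1}$; hence $\gamma=\alpha_{1}$ for every $\gamma_{0} \in [0,\alpha_{2})$. The case $\gamma_{0}=\alpha_{2}$ is trivial, as $\alpha_{2}$ is fixed. For $\gamma_{0} \in (\alpha_{2},\alpha_{3})$ the orbit increases to $\alpha_{3}$, and for $\gamma_{0} \in [\alpha_{3},1]$ it decreases (or is constant) to $\alpha_{3}$, so $\gamma=\alpha_{3}$ for every $\gamma_{0} \in (\alpha_{2},1]$. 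This reproduces the three claimed regimes.
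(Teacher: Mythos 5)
Your proposal follows the same skeleton as the paper's own proof: monotonicity of the orbit from the order-preserving property of $h$, boundedness, convergence to a fixed point by continuity, and identification of the limit through the sign of $h(x)-x$ on the intervals between consecutive fixed points. Where you depart from the paper is in your third step, and you are right to flag it: the paper's proof simply \emph{asserts} that $y=h(x)$ lies beneath $y=x$ on $(\alpha_{1},\alpha_{2})$ and above it on $(\alpha_{2},\alpha_{3})$, and this does not follow from the stated hypotheses. Indeed, the lemma as literally stated is false: for small $\epsilon>0$ the function
\begin{equation}
h(x) \;=\; x - \epsilon\left(x-\tfrac{1}{4}\right)^{2}\left(x-\tfrac{1}{2}\right)\left(x-\tfrac{3}{4}\right)^{2}\nonumber
\end{equation}
is continuous and strictly increasing on $[0,1]$ (take $\epsilon$ small enough that the derivative of the quintic perturbation is below $1$ in absolute value), satisfies $h(0)>0$ and $h(1)<1$, and has exactly the three fixed points $\alpha_{1}=1/4$, $\alpha_{2}=1/2$, $\alpha_{3}=3/4$; yet $h(x)>x$ on $(1/4,1/2)$, so every orbit started in $(\alpha_{1},\alpha_{2})$ increases to $\alpha_{2}$, not to $\alpha_{1}$ as the lemma claims. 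So the transversality (sign-change) condition you isolate must genuinely be added to the hypotheses or verified in each application; it is not a cosmetic point. Your repair is also the correct one: for $h=g_{m}$ with $p>p(m)$, the function $g_{m}(x)-x$ vanishes at $\alpha$ and $1/2$ and is strictly convex on $[0,1/2]$ (Theorem~\ref{thm:convex_concave}), hence strictly negative on $(\alpha,1/2)$, and symmetrically strictly positive on $(1/2,1-\alpha)$ by concavity; and in the application within the proof of Theorem~\ref{thm:unequal}, the same alternating pattern follows from the paper's factorization $g_{3}(x)-x=(1-x)r(x)$, where $r$ is a quadratic with positive leading coefficient and both roots in $(0,1)$. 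With that added hypothesis your argument is complete, and it is more careful than the paper's, which passes over this point silently.
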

\begin{proof}
We first observe that $0 \leqslant h(x) \leqslant 1$ for each $x \in [0,1]$. We note that the curve $y=h(x)$ lies 
\begin{enumerate}
\item above the line $y=x$ for $x \in [0,\alpha_{1})$ (if $\alpha_{1}=0$, which is equivalent to saying that $h(0)=0$, then this sub-interval simply does not exist),
\item beneath the line $y=x$ for $x \in (\alpha_{1},\alpha_{2})$,
\item above the line $y=x$ for $x \in (\alpha_{2},\alpha_{3})$, and
\item beneath the line $y=x$ for $x \in (\alpha_{3},1]$ (if $\alpha_{3}=1$, which is equivalent to saying that $h(1)=1$, then this sub-interval simply does not exist).
\end{enumerate}
Note, also, that whenever the limit $\gamma$, as defined in the statement of Lemma~\ref{lem:increasing_function_limits}, exists, it must be a fixed point of $h$. This is because 
\begin{equation}
h(\gamma) = h\left(\lim_{n \rightarrow \infty}\gamma_{n}\right) = \lim_{n \rightarrow \infty}h(\gamma_{n}) = \lim_{n \rightarrow \infty}\gamma_{n+1} = \gamma,\nonumber
\end{equation}
since $h$ is continuous.

Let us consider $\gamma_{0} \in [0,\alpha_{1})$ (when this sub-interval is non-empty, i.e.\ when $h(0) > 0$). The curve $y=h(x)$ lies above the line $y=x$ for $x \in [0,\alpha_{1})$, so that $h(\gamma_{0}) > \gamma_{0}$, and since $h$ is strictly increasing, an iterative application of $h$ yields $\gamma_{n+1} > \gamma_{n}$ for each $n \in \mathbb{N}_{0}$. Thus, the sequence $\{\gamma_{n}\}_{n \in \mathbb{N}_{0}}$ is strictly increasing. It is also bounded between $0$ and $1$, which implies that the limit $\gamma = \lim_{n \rightarrow \infty}\gamma_{n}$ exists. Since $\gamma_{0} < \alpha_{1}$, the strictly increasing nature of $h$ also ensures that $\gamma_{n} < \alpha_{1}$ for each $n \in \mathbb{N}_{0}$. This, in turn, implies that $\gamma \leqslant \alpha_{1}$ as well. Since $\gamma$ must be a fixed point of $h$, the only possibility is that $\gamma=\alpha_{1}$.

When $\gamma_{0} \in (\alpha_{2},\alpha_{3})$, we argue in the same manner as above that $\gamma = \lim_{n \rightarrow \infty}\gamma_{n}$ exists, and that it equals $\alpha_{3}$.

Let us now consider $\gamma_{0} \in (\alpha_{1},\alpha_{2})$. The curve $y=h(x)$ lies beneath the line $y=x$ for $x \in (\alpha_{1},\alpha_{2})$, so that $h(\gamma_{0}) < \gamma_{0}$, and since $h$ is strictly increasing, an iterative application of $h$ yields $\gamma_{n+1} < \gamma_{n}$ for each $n \in \mathbb{N}_{0}$. Thus, the sequence $\{\gamma_{n}\}_{n \in \mathbb{N}_{0}}$ is strictly decreasing. It is also bounded between $0$ and $1$, which implies that the limit $\gamma = \lim_{n \rightarrow \infty}\gamma_{n}$ exists. Since $\gamma_{n} < \gamma_{0}$, and $\alpha_{1} < \gamma_{0}$ implies, via an interative application of the strictly increasing $h$, that $\alpha_{1} < \gamma_{n}$, we conclude that $\alpha_{1} < \gamma_{n} < \gamma_{0}$ for each $n$, so that $\alpha_{1} \leqslant \gamma < \gamma_{0}$. Since the only fixed point in $[\alpha_{1},\gamma_{0})$ is $\alpha_{1}$, hence $\gamma=\alpha_{1}$ in this case.

When $\gamma_{0} \in (\alpha_{3},1]$ (when this sub-interval is non-empty, i.e.\ when $h(1) < 1$), we argue in the same manner as in the previous paragraph that $\gamma = \lim_{n \rightarrow \infty}\gamma_{n}$ exists, and that it equals $\alpha_{3}$. 

Finally, if $\gamma_{0}=\alpha_{i}$ for any $i \in \{1,2,3\}$, then $\gamma_{n}=\alpha_{i}$ for each $n \in \mathbb{N}$, so that $\gamma = \alpha_{i}$ as well. This completes the proof of Lemma~\ref{lem:increasing_function_limits}.
\end{proof}

As evident from Remark~\ref{rem:p_{B}=p_{R}_unique_regime_already_proved} and \eqref{thm:equal_part_1} of Theorem~\ref{thm:main_equal}, for every choice of $\pi_{0}$ in \eqref{initial}, the sequence $\{\pi_{t}\}_{t \in \mathbb{N}_{0}}$, defined via \eqref{time_t}, converges to the unique fixed point $1/2$ of $g_{m}$ as $t \rightarrow \infty$. Therefore, we need only focus on the regime $p(m) < p < 1$, where the fixed points of $g_{m}$, in the interval $[0,1]$, are of the form $\alpha$, $1/2$ and $1-\alpha$, for some $\alpha \in [0,1/2)$, by part \eqref{thm:equal_part_2} of Theorem~\ref{thm:main_equal}. We begin by showing that, for a fixed $m \in \mathbb{N}$ with $m \geqslant 2$ and a fixed $p \in (0,1]$, the function $g_{m}$ is strictly increasing on $[0,1]$ (when $p=0$, we have already seen from Remark~\ref{rem:p=0} that $g_{m}$ equals the constant function $1/2$). Note, from \eqref{g'(x)} and \eqref{intermediate_1}, that
\begin{align}
{}&g'_{m}(1) = m\left\{f_{m}(m)-f_{m}(m-1)\right\} = m\left\{\Prob[A_{m}>0]+\frac{1}{2}\Prob[A_{m}=0]-\Prob[A_{m-1}>B_{1}]-\frac{1}{2}\Prob[A_{m-1}=B_{1}]\right\}\nonumber\\
={}& m\left\{1-\frac{1}{2}\Prob[A_{m}=0]-p\Prob[A_{m-1}>1]-(1-p)\Prob[A_{m-1}>0]-\frac{p}{2}\Prob[A_{m-1}=1]-\frac{(1-p)}{2}\Prob[A_{m-1}=0]\right\}\nonumber\\
={}& m\left\{\frac{p}{2}\Prob[A_{m-1}=1]+\frac{1+p}{2}\Prob[A_{m-1}=0]-\frac{1}{2}\Prob[A_{m}=0]\right\} \nonumber\\
={}& \frac{m}{2}\left\{(m-1)p^{2}(1-p)^{m-2}+(1+p)(1-p)^{m-1}-(1-p)^{m}\right\} > 0.\nonumber
\end{align}
Moreover, applying \eqref{symm_cond} to the expression in \eqref{g'(x)} and subsequently applying a change of variable to the summation (switching the index of the sum from $\ell$ to $m-1-\ell$), we obtain $g'_{m}(1-x)=g'_{m}(x)$, so that $g'_{m}(0) > 0$. From Theorem~\ref{thm:convex_concave}, we know that $g''_{m}(x) > 0$ for each $x \in [0,1/2)$ and $g''_{m}(x) < 0$ for each $x \in (1/2,1]$, so that $g'_{m}(x)$ is strictly increasing for $x \in [0,1/2)$ and $g'_{m}(x)$ is strictly decreasing for $x \in (1/2,1]$. Since we have shown above that both $g'_{m}(1)$ and $g'_{m}(0)$ are strictly positive, it is evident that $g'_{m}(x) > 0$ for all $x \in [0,1]$, allowing us to conclude that $g_{m}(x)$ is strictly increasing for $x \in [0,1]$. The rest of the last assertion made in the statement of Theorem~\ref{thm:main_equal} is established simply by an application of Lemma~\ref{lem:increasing_function_limits}.

\subsection{Finding the threshold probability $p(m)$ for small values of $m$}\label{subsec:small_m}
For small values of $m$, it is relatively easy to find, analytically, the value of the threshold $p(m)$, in Theorem~\ref{thm:main_equal}. When $m=3$, we obtain, from \eqref{intermediate_1}, that 
\begin{equation}
f_{3}(0) = \frac{1}{2}\Prob[B_{3}=0] = \frac{1}{2}(1-p)^{3}\nonumber
\end{equation}
and
\begin{align}
f_{3}(1) ={}& \Prob[B_{2}=0,A_{1}=1]+\frac{1}{2}\Prob[B_{2}=A_{1}=1]+\frac{1}{2}\Prob[B_{2}=A_{1}=0]\nonumber\\
={}& p(1-p)^{2}+p^{2}(1-p)+\frac{1}{2}(1-p)^{3}.\nonumber
\end{align}
Since \eqref{symm_cond} holds, we need not compute $f_{3}(2)$ and $f_{3}(3)$. From \eqref{g_{m}^{abs}}, we have 
\begin{align}
{}& g_{3}(x) = f_{3}(0)(1-x)^{3} + 3f_{3}(1)x(1-x)^{2} + 3\left\{1-f_{3}(1)\right\}x^{2}(1-x) + \left\{1-f_{3}(0)\right\}x^{3}\nonumber\\
={}& \frac{1}{2}(1-p)^{3}(1-2x)(1-x+x^{2}) + 3\left\{p(1-p)^{2}+p^{2}(1-p)+\frac{1}{2}(1-p)^{3}\right\}x(1-x)(1-2x) + 3x^{2}-2x^{3},\nonumber
\end{align}
so that, upon differentation, we obtain
\begin{align}
g'_{3}(x) ={}& -\frac{3}{2}(1-p)^{3}(1-2x+2x^{2}) + \frac{3}{2}(1-p)(1+p^{2})(1-6x+6x^{2}) + 6x-6x^{2}.\nonumber
\end{align}
From Lemma~\ref{lem:fixed_point_count_g'(1/2)}, it suffices for us to find all values of $p$ for which the derivative $g'_{3}(1/2) \leqslant 1$, which is equivalent to
\begin{align}
{}& -\frac{3}{2}(1-p)^{3}\left\{1-2\left(\frac{1}{2}\right)+2\left(\frac{1}{2}\right)^{2}\right\} + \frac{3}{2}(1-p)(1+p^{2})\left\{1-6\left(\frac{1}{2}\right)+6\left(\frac{1}{2}\right)^{2}\right\} + 6\left(\frac{1}{2}\right)-6\left(\frac{1}{2}\right)^{2} \leqslant 1\nonumber\\
{}&\Longleftrightarrow \frac{3p^{3}}{2}-3p^{2}+3p \leqslant 1 \Longleftrightarrow p \leqslant \frac{1}{3}(2+2^{1/3}-2^{2/3}) \approx 0.557507.\nonumber
\end{align}
That the threshold $p(3)$ indeed equals $0.557507$ can be deduced analytically as follows: standard techniques for finding the roots of cubic polynomials yield the only real root of the polynomial $3/2p^{3}-3p^{2}+3p-1$ to be $1/3(2+2^{1/3}-2^{2/3}) \approx 0.557507$, and since this polynomial takes the value $-1$ at $p=0$, it is evident that it it negative for all $p < 0.557507$ and positive for all $p > 0.557507$.
%. We are concerned with the behaviour of the cubic polynomial $3/2p^{3}-3p^{2}+3p$. Its derivative equals the quadratic polynomial $9/2p^{2}-6p+3$, whose discriminant is $6^{2} - 4 \cdot (9/2) \cdot 3 = -18 < 0$, making its roots complex, so that $9/2p^{2}-6p+3 > 0$ for all $p \in [0,1]$. This makes $3/2p^{3}-3p^{2}+3p$ strictly increasing.
Thus, when $m=3$ and $p_{B}=p_{R}=p$, the function $g_{3}$ has a unique fixed point, which is $1/2$, in $[0,1]$ if and only if $p \leqslant 0.557507$. For $p > 0.557507$, the function $g_{3}$ has three distinct fixed points in $[0,1]$, of the form $\alpha$, $1/2$ and $1-\alpha$ for some $\alpha \in [0,1/2)$.

When $m=4$, we obtain, from \eqref{intermediate_1}, 
\begin{equation}
f_{4}(0) = \frac{1}{2}\Prob[B_{4}=0] = \frac{1}{2}(1-p)^{4}\nonumber
\end{equation}
and
\begin{equation}
f_{4}(1) = \Prob[A_{1}=1,B_{3}=0] + \frac{1}{2}\Prob[A_{1}=B_{3}=1] + \frac{1}{2}\Prob[A_{1}=B_{3}=0] = p(1-p)^{3}+\frac{3}{2}p^{2}(1-p)^{2} + \frac{1}{2}(1-p)^{4},\nonumber
\end{equation}
so that \eqref{g_{m}^{abs}}, together with \eqref{symm_cond} (which also implies that $f_{4}(2) = 1/2$), yields
\begin{align}
{}&g_{4}(x) = f_{4}(0)(1-x)^{4} + 4 f_{4}(1)x(1-x)^{3} + 6 \cdot \frac{1}{2} x^{2}(1-x)^{2} + 4\left\{1-f_{4}(1)\right\}x^{3}(1-x) + \left\{1-f_{4}(0)\right\}x^{4}\nonumber\\
={}& \frac{1}{2}(1-p)^{4}(1-2x)(1-2x+2x^{2}) + 4\left\{p(1-p)^{3}+\frac{3}{2}p^{2}(1-p)^{2} + \frac{1}{2}(1-p)^{4}\right\}x(1-x)(1-2x) + 3x^{2}-2x^{3}.\nonumber
\end{align}
Differentiating, we obtain
\begin{align}
g'_{4}(x) ={}& -2(1-p)^{4}(1-3x+3x^{2}) + 2(1-p)^{2}(1+2p^{2})(1-6x+6x^{2}) + 6x-6x^{2},\nonumber
\end{align}
so that the inequality that $g'_{4}(1/2) \leqslant 1$ becomes equivalent to
\begin{align}
{}& -2(1-p)^{4}\left\{1-3\left(\frac{1}{2}\right)+3\left(\frac{1}{2}\right)^{2}\right\} + 2(1-p)^{2}(1+2p^{2})\left\{1-6\left(\frac{1}{2}\right)+6\left(\frac{1}{2}\right)^{2}\right\} + 6\left(\frac{1}{2}\right)-6\left(\frac{1}{2}\right)^{2} \leqslant 1\nonumber\\
{}&\Longleftrightarrow 4p-6p^{2}+6p^{3}-\frac{5p^{4}}{2} \leqslant 1 \Longleftrightarrow p \leqslant 0.42842.\nonumber
\end{align}
That the threshold $p(4)$ indeed equals $0.42842$ can be justified analytically as follows: standard techniques for solving for the roots of a quartic equation yield the only two real roots of the polynomial $4p-6p^{2}+6p^{3}-(5/2)p^{4}-1$ to be approximately $0.42842$ and $1.3300$. Since the polynomial $4p-6p^{2}+6p^{3}-(5/2)p^{4}-1$ assumes the value $-1$ at $p=0$, it is immediate that it is negative for all $p \in [0,0.42842)$ and positive for all $p \in (0.42842,1]$. Thus, when $m=4$ and $p_{B}=p_{R}=p$, the function $g_{4}$ has a unique fixed point, which is $1/2$, in $[0,1]$ if and only if $p \leqslant 0.42842$. For $p > 0.42842$, the function $g_{4}$ has three distinct fixed points in $[0,1]$, of the form $\alpha$, $1/2$ and $1-\alpha$ for some $\alpha \in [0,1/2)$.

\section{Analysis of our learning model when $m=3$, $p_{B}=1$ and $p_{R} \in [0,1]$}\label{sec:p_{B}=1}
We prove Theorem~\ref{thm:unequal} in \S\ref{sec:p_{B}=1}. We begin by computing $f_{3}(k)$ for $k \in \{0,1,2,3\}$. Since $p_{B}=1$, we have $A_{k}=k$ in \eqref{intermediate_1} for each $k \in \{0,1,2,3\}$, so that 
\begin{equation}
f_{3}(0) = \frac{1}{2}\Prob[B_{3}=0] = \frac{1}{2}(1-p_{R})^{3} \text{ and } f_{3}(1) = \Prob[B_{2}<1]+\frac{1}{2}\Prob[B_{2}=1] = (1-p_{R}),\nonumber
\end{equation}
whereas $f_{3}(2)=f_{3}(3)=1$. This yields, from \eqref{g_{m}^{abs}},
\begin{align}
g_{3}(x) ={}& \sum_{k=0}^{3}f_{3}(k){3 \choose k}x^{k}(1-x)^{3-k} = \frac{1}{2}(1-p_{R})^{3}(1-x)^{3}+3(1-p_{R})x(1-x)^{2}+3x^{2}(1-x)+x^{3}.\label{g_{3}(x)_p_{B}=1}
\end{align}

From \eqref{g_{3}(x)_p_{B}=1}, we obtain 
\begin{align}
g_{3}(x)-x ={}& \frac{1}{2}(1-p_{R})^{3}(1-x)^{3}+3(1-p_{R})x(1-x)^{2}+3x^{2}(1-x)+x^{3}-x\nonumber\\
={}& (1-x)\left\{\frac{1}{2}(1-p_{R})^{3}(1-x)^{2}+3(1-p_{R})x(1-x)+3x^{2}-x(1+x)\right\}\nonumber\\
={}& (1-x)\left[\left\{\frac{1}{2}(1-p_{R})^{3}-3(1-p_{R})+2\right\}x^{2}+\left\{-(1-p_{R})^{3}+3(1-p_{R})-1\right\}x+\frac{1}{2}(1-p_{R})^{3}\right].\label{g_{3}(x)-x_factorized}
\end{align}
Let us denote by $r(x)$ the quadratic polynomial within the square brackets of \eqref{g_{3}(x)-x_factorized}, so that $r(x)$ has the roots
\begin{align}
{}& \frac{(1-p_{R})^{3}-3(1-p_{R})+1 \pm \sqrt{\left\{-(1-p_{R})^{3}+3(1-p_{R})-1\right\}^{2}-\left\{(1-p_{R})^{3}-6(1-p_{R})+4\right\}(1-p_{R})^{3}}}{(1-p_{R})^{3}-6(1-p_{R})+4}\nonumber\\
={}& \frac{(1-p_{R})^{3}-3(1-p_{R})+1 \pm \sqrt{(2p_{R}-1)(p_{R}+1+\sqrt{3})\{p_{R}-(\sqrt{3}-1)\}}}{(1-p_{R})^{3}-6(1-p_{R})+4}.\label{quadratic_roots}
\end{align}
First, we note that $r(0) = 1/2(1-p_{R})^{3}$ and $r(1) = 1$, both of which are strictly positive when $p_{R} \in [0,1)$. If the leading coefficient, $1/2(1-p_{R})^{3}-3(1-p_{R})+2$, is strictly negative, which happens whenever $p_{R} \in [0,2-\sqrt{3})$, we see that $r(x) \rightarrow -\infty$ when $x \rightarrow \infty$ as well as when $x \rightarrow -\infty$. Therefore, the two roots of $r(x)$ will, in this case, be real and appear \emph{outside} of the interval $[0,1]$.

Let us now consider the case where the leading coefficient, $1/2(1-p_{R})^{3}-3(1-p_{R})+2$, is strictly positive, which is equivalent to focusing on $p_{R} \in (2-\sqrt{3},1]$. We need only worry about the case where the roots of $r(x)$ are real, i.e.\ where the discriminant $(2p_{R}-1)(p_{R}+1+\sqrt{3})\{p_{R}-(\sqrt{3}-1)\}\geqslant 0$ (since $g_{3}$ has a unique fixed point in $[0,1]$, which is $1$, when the roots of $r(x)$ are complex). This is true when $p_{R} \in (2-\sqrt{3},1/2] \cup [\sqrt{3}-1,1]$. As noted above, $r(0)$ and $r(1)$ are both strictly positive, so that for the roots of $r(x)$ to exist \emph{within} the interval $(0,1)$, we must have $\min\{r(x): x \in \mathbb{R}\}$, attained at 
\begin{equation}
\frac{(1-p_{R})^{3}-3(1-p_{R})+1}{2\left\{\frac{1}{2}(1-p_{R})^{3}-3(1-p_{R})+2\right\}} = \frac{(1-p_{R})^{3}-3(1-p_{R})+1}{(1-p_{R})^{3}-6(1-p_{R})+4},\nonumber
\end{equation}
\emph{inside} the interval $(0,1)$. Since the denominator has already been assumed to be positive, we now need to make sure that the numerator is positive as well, which happens whenever $p_{R} \in (0.6527,1]$ (note that the ratio above is already bounded above by $1$, which is why we need no further conditions on $p_{R}$ to ensure that $\min\{r(x): x \in \mathbb{R}\}$ is attained at a point strictly less than $1$). Note that the sets $(2-\sqrt{3},1/2] \cup [\sqrt{3}-1,1]$ and $(0.6527,1]$ overlap in $[\sqrt{3}-1,1]$, which tells us that $r(x)$ has two real roots, both within the interval $(0,1)$, if and only if $p_{R} \in [\sqrt{3}-1,1]$. 

Finally, note that when $p_{R}=2-\sqrt{3}$, the quadratic polynomial $r(x)$ reduces to a linear one, since the leading coefficient becomes $0$, but in this case, its only root, given by $1/2(1-p_{R})^{3}\{(1-p_{R})^{3}-3(1-p_{R})+1\}^{-1}$, will be negative since its denominator is negative.

When $p_{R} \in [0,\sqrt{3}-1)$, we know, by Remark~\ref{rem:unique_fixed_point_attractive}, that no matter which $\pi_{0} \in [0,1]$ we choose for the distribution in \eqref{initial}, the limit $\pi=\lim_{t \rightarrow \infty}\pi_{t}$, where $\pi_{t}$ is as defined in \eqref{time_t}, exists and equals the unique fixed point of $g_{3}$, which is $1$. Next, we consider $p_{R} \in (\sqrt{3}-1,1)$, so that $g_{3}$ has three distinct fixed points in $(0,1]$ (as proved above). As in the statement of Theorem~\ref{thm:unequal}, let us indicate the two fixed points of $g_{3}$ that are distinct from $1$ by $\alpha_{1}$ and $\alpha_{2}$, where $0 < \alpha_{1} < \alpha_{2} < 1$. Differentiating \eqref{g_{3}(x)_p_{B}=1}, we obtain
\begin{align}
g'_{3}(x) ={}& \frac{3}{2}(1-p_{R})\left(1+2p_{R}-p_{R}^{2}\right)(1-x)^{2}+6p_{R}x(1-x),\nonumber
\end{align}
implying that $g_{3}$ is a strictly increasing function on $[0,1]$, and since $p_{R} < 1$, we have $g_{3}(0) > 0$. We are now able to conclude, using Lemma~\ref{lem:increasing_function_limits}, that $\pi=\alpha_{1}$ whenever $\pi_{0} \in [0,\alpha_{2})$, whereas $\pi = 1$ whenever $\pi_{0} \in (\alpha_{2},1]$. Note that the case where $p_{R}=1$ is already taken care of in Theorem~\ref{thm:main_equal}, since $p_{B}=p_{R}=1$ in that case.

Finally, we consider $p_{R}=\sqrt{3}-1$. In this case, $g_{3}$ has precisely two distinct fixed points in $[0,1]$, namely $\alpha=2/3-1/\sqrt{3}$ and $1$, and the curve $y=g_{3}(x)$ merely \emph{touches} the line $y=x$ at $x=\alpha$ instead of intersecting it. Therefore, except for the points $x=\alpha$ and $x=1$, the curve $y=g_{3}(x)$ lies above the line $y=x$ throughout the entire interval $[0,1]$. Let us now choose $\pi_{0} \in [0,1) \setminus \{\alpha\}$. We have $\pi_{1} = g_{3}(\pi_{0}) > \pi_{0}$, and the strictly increasing nature of $g_{3}$ ensures that $\{\pi_{n}\}_{n \in \mathbb{N}_{0}}$ is a strictly increasing sequence. Consequently, the limit $\pi = \lim_{n \rightarrow \infty}\pi_{n}$ exists. When $\pi_{0} \in [0,\alpha)$, the strictly increasing nature of $g_{3}$ ensures that $0 \leqslant \pi_{0} < \pi_{n} < \alpha$ for each $n \in \mathbb{N}$, so that $0 < \pi \leqslant \alpha$, which immediately implies that $\pi=\alpha$. When $\pi_{0} \in (\alpha,1)$, we have $\alpha < \pi_{0} < \pi_{n} < 1$, so that $\alpha < \pi \leqslant 1$, which immediately implies $\pi=1$. This brings us to the end of the proof of Theorem~\ref{thm:unequal}.

\bibliography{Learning_model_bib}
\end{document}